\documentclass{amsart}
\usepackage[top=1in,left=1in,right=1in,bottom=1in]{geometry}

\usepackage{ytableau,tikz}

\theoremstyle{plain}
\newtheorem{thm}{Theorem}[section]
\newtheorem{lem}[thm]{Lemma}
\newtheorem{cor}[thm]{Corollary}
\newtheorem{prop}[thm]{Proposition}

\theoremstyle{definition}
\newtheorem{defi}[thm]{Definition}
\newtheorem{ex}[thm]{Example}
\newtheorem{rmk}[thm]{Remark}
\newtheorem{alg}[thm]{Algorithm}
\newtheorem{ques}[thm]{Question}

\newcommand{\fkS}{\mathfrak{S}}
\newcommand{\fkG}{\mathfrak{G}}

\newcommand{\bfx}{\mathbf{x}}
\newcommand{\bfy}{\mathbf{y}}

\newcommand{\ZZ}{\mathbb{Z}}

\newcommand{\CC}{\mathbb{C}}

\newcommand{\wt}{\mathrm{wt}}
\newcommand{\SSYT}{\mathrm{SSYT}}

\newcommand{\RSDT}{\mathrm{SetRSDT}}
\newcommand{\supp}{\mathrm{Supp}}
\newcommand{\des}{\mathrm{Des}}
\newcommand{\SetSSYT}{\mathrm{SetSSYT}}

\newcommand{\cev}{\overleftarrow}

\title[On vexillary double Edelman--Greene coefficients]{Tableau formula for vexillary double Edelman--Greene coefficients}

\author{Adam Gregory, Zachary Hamaker \and Tianyi Yu}

%\address{\addressmark{1}Department of Mathematics, University of Florida \\ \addressmark{2} Laboratoire d'Alg\`ebre, de Combinatoire et d'Informatique Math\'ematique, Universit\'e du Qu\'ebec \`a Montr\'eal}

\usepackage[backend=bibtex]{biblatex}
\addbibresource{references.bib}

\begin{document}

\begin{abstract}
Lam, Lee and Shimozono recently introduced backstable double Grothendieck polynomials to represent $K$--theory classes of the infinite flag variety.
They used them to define double $\beta$--Stanley symmetric functions, which expand into double stable Grothendieck functions with polynomial coefficients called double $\beta$--Edelman--Greene coefficients.
Anderson proved these coefficients are $\beta$--Graham positive.
For vexillary permutations, this is equivalent to a statement for skew flagged double $\beta$--Grothendieck functions.
Working in this setting, we give a tableau formula for vexillary double $\beta$--Edelman--Greene coefficients that is manifestly $\beta$--Graham positive.
Our formula demonstrates a finer notion of positivity than was previously known.
\end{abstract}

\maketitle

\section{Introduction}

The goal of this paper to understand combinatorially certain geometric positivity results in cohomology and $K$--theory from~\cite{anderson2023strong}.
Specifically, Anderson shows double $\beta$--Stanley symmetric functions and double stable $\beta$--Grothendieck functions from~\cite{lam2021Ktheory} represent certain degeneracy loci, with the former expanding into the latter.
The coefficients are polynomials called \emph{double $\beta$--Edelman--Greene coefficients} and can include negative terms, but exhibit a finer notion of \emph{Graham positivity}, as was conjectured in~\cite{lam2021Ktheory} and proved in~\cite{anderson2023strong} using geometric methods.
We give a combinatorial description of these coefficients for vexillary permutations in terms of tableaux that is manifestly Graham positive.
Earlier combinatorial interpretations of double Edelman--Greene coefficients are not obviously Graham positive even in the cohomology case $\beta = 0$.
Classical Edelman--Greene coefficients from~\cite{edelman1987balanced} compute the Schur expansion of Stanley symmetric functions from~\cite{stanley1984number}.
They generalize Littlewood--Richardson coefficients and have been used to describe several more general families of Schubert structure coefficients~\cite{huang2023schubert}.
Similarly, while double $\beta$--Edelman--Greene coefficients are little studied, they are known to compute double $K$--homology structure coefficients in the Grassmannian.
As such, our work provides a new entry point towards the combinatorial understanding of Schubert calculus.

Let $w \in S_\ZZ$, the permutations of $\ZZ$ fixing all but finitely many values.
We make use of the variables $\bfx = (x_i)_{i \in \ZZ}$, $\bfy = (y_j)_{j \in \ZZ}$, $\bfx_+ = (x_i)_{i \geq 1}$, and $\bfx_- = (x_i)_{i \leq 0}$.
Each double Stanley symmetric function is a specialization of a \emph{backstable $\beta$--Grothendieck polynomial} $\cev{\fkG}_w(\beta;\bfx;\bfy)$, which represents both the connective $K$--theory for certain degeneracy loci~\cite{anderson2023strong} and, with $\beta = -1$, the $K$--theory of Schubert varieties in the infinite flag variety~\cite{lam2021Ktheory}.
Each $\cev{\fkG}_w(\beta;\bfx;\bfy)$ expands into \emph{double stable $\beta$--Grothendieck functions} $G_\lambda(\beta;\bfx_-;\bfy)$, which represent the connective $K$--theory of the Grassmannian:
\begin{equation}
    \label{eq:Kbackstable}
    \cev{\fkG}_w(\beta;\bfx;\bfy) = \sum_\mu a^{w}_\mu(\beta;\bfx;\bfy) \cdot G_\mu(\beta;\bfx_-;\bfy)
\end{equation}
where $a^{w}_\mu(\beta;\bfx;\bfy)$ is a rational function.
Mapping $\bfx \mapsto \bfy$ in these coefficients gives $j^{w}_\mu(\beta;\bfy) = a^{w}_\mu(\beta;\bfy;\bfy)$, and the \emph{double $\beta$--Stanley symmetric function} of $w$ is 
\begin{equation}
\label{eq:KStanley}
    F_w(\beta;\bfx;\bfy)  = \sum_\mu j^{w}_\mu(\beta;\bfy) \cdot G_\mu(\beta;\bfx_-;\bfy).
\end{equation}

Let $a \ominus b = \frac{a-b}{1+\beta b}$.
Say $f(\beta;\bfy)$ is \emph{$\beta$--Graham positive} if $f(\beta;\bfy) \in \ZZ_{\geq 0}[\beta(y_i \ominus y_j) : i \prec j]$ where $1 \prec 2 \prec \dots \prec -2 \prec -1 \prec 0$.
In~\cite{anderson2023strong}, Anderson showed $\beta^{\ell(w)-|\lambda|}j_\lambda^w(\beta;\bfy)$ is $\beta$--Graham positive, resolving~\cite[Conj 8.23]{lam2021Ktheory}.
Our main theorem is a refinement of Anderson's result for vexillary $w$.
A \emph{$\beta$--Graham positive monomial} is a product of terms $\beta(y_i \ominus y_j)$ with $i \prec j$.
Define three types of terms $y_i \ominus y_j$: 
\[
\mbox{Type 1:}\ \ 0 < i <j,\quad \mbox{Type 2:} \ \ i < j \leq 0, \quad \mbox{Type 3:} \ \ j \leq 0 < i. 
\]
\begin{thm}
    \label{t:Kmain}
    For $w \in S_\ZZ$ vexillary, $\beta^{\ell(w)-|\mu|}j^{w}_\mu(\beta;\bfy)$ is a sum of $\beta$--Graham positive monomials indexed by tableaux.
    Each such monomial contains $\beta(y_i \ominus y_j)$ at most twice if it is Type 3 and at most once otherwise.
    Furthermore, for all $\mu$ these monomials will only have terms of one of the Types 1 or 2.
\end{thm}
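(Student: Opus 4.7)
The plan is to exploit the vexillary hypothesis to reduce the theorem to a statement about skew flagged double $\beta$--Grothendieck polynomials, then build an explicit tableau model whose monomial weights manifestly satisfy all three claims.

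First I would use the standard fact that for vexillary $w \in S_\ZZ$, the backstable polynomial $\cev{\fkG}_w(\beta;\bfx;\bfy)$ coincides, up to variable relabelings dictated by the shape $\lambda(w)$ and its flag, with a skew flagged double $\beta$--Grothendieck polynomial. After the substitution $\bfx \mapsto \bfy$ from~\eqref{eq:KStanley}, this identifies $j^w_\mu(\beta;\bfy)$ with the coefficient of $G_\mu(\beta;\bfx_-;\bfy)$ in the skew flagged expansion. This is the content behind the abstract's remark that, for vexillary $w$, the result is equivalent to a statement about skew flagged double $\beta$--Grothendieck functions.

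Next I would introduce a family of set-valued (reverse) semistandard tableaux on the skew shape $\lambda(w)/\nu(w)$, with row and column entry bounds dictated by the flag (matching the notational setup $\RSSYT$, $\SetSSYT$, $\RSDT$ of the preamble). To each tableau $T$ I assign a monomial $\wt(T)$ that is a product of factors $\beta(y_i \ominus y_j)$, where the pair $(i,j)$ is read from each entry and its position. The semistandardness guarantees $i \prec j$ in the order $1 \prec 2 \prec \cdots \prec -2 \prec -1 \prec 0$, so $\wt(T)$ is automatically $\beta$--Graham positive. Grouping tableaux by the partition $\mu$ carved out by a distinguished ``ground'' subset of entries should reproduce $\beta^{\ell(w)-|\mu|} j^w_\mu(\beta;\bfy)$. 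The type and multiplicity restrictions then fall out of a box-by-box accounting: each box contributes at most one factor from its primary entry and at most one extra factor when set-valued, so at most two in total; boxes with all entries on one side of the boundary between positive and non-positive indices produce only Type 1 or only Type 2, whereas boxes straddling the boundary produce (up to two) Type 3 factors. Since the support $\mu$ determines which side of the boundary the relevant boxes lie on, for each fixed $\mu$ exactly one of Types 1, 2 can appear.

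The hardest step is identifying my tableau generating function with the Graham positive expansion of the skew flagged double $\beta$--Grothendieck polynomial in the $G_\mu$ basis. Anderson's proof in~\cite{anderson2023strong} is geometric and does not deliver a combinatorial formula, so this identification must be established either by an explicit bijection with an existing (not manifestly positive) combinatorial model of $j^w_\mu$ such as a set-valued balanced tableau expansion, or by showing that the tableau sum satisfies the same recursion under isobaric divided differences $\pi_i$ that characterizes the skew flagged expansion. Once that identification is in hand, Graham positivity together with all the type and multiplicity bounds follow directly from the construction of $\wt(T)$.
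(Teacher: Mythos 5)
Your proposal correctly identifies the reduction to skew flagged double $\beta$--Grothendieck polynomials via $j^w_\mu(\beta;\bfy) = j^{\lambda(w),\phi(w)}_\mu(\beta;\bfy)$ and the general shape of the desired tableau model, but it glosses over the central technical obstacle. You assert that ``semistandardness guarantees $i \prec j$,'' so that every box weight is automatically a $\beta$--Graham positive factor. This is false without further work. A cell $(r,c)$ filled by $i>0$ has weight $\beta(x_i \ominus y_{i+c-r})$, and after $\bfx \mapsto \bfy$ this is $\beta(y_i \ominus y_{i+c-r})$. When $(r,c)$ is below the diagonal, $i + c - r < i$, and nothing in semistandardness prevents $i + c - r$ from being a \emph{positive} integer smaller than $i$, giving a factor $\beta(y_i \ominus y_j)$ with $0 < j < i$, which is \emph{not} Graham positive. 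The paper has to work hard precisely here: it splits $\lambda/\mu$ along the diagonal into pieces $\mu^U$ (above) and $\mu^D$ (below), and for the below-diagonal piece introduces the auxiliary flag $\psi_i = \min(i - \lambda_i, \phi_i)$ and a permutation $\pi$ acting on the $\bfx$-variables (constructed via the algorithm preceding Lemma~\ref{l:all i the same}), proving via the Bender--Knuth symmetry in Lemma~\ref{l:flag-symm} that $G^{\phi}_{\lambda/\mu^D}|_{\bfx\mapsto\bfy} = G^{\psi}_{\lambda/\mu^D}(\beta;\pi\cdot\bfx_+;\bfy)|_{\bfx\mapsto\bfy}$. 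Only with the shifted flag $\psi$ does one get $i+c-r\le 0$ for every entry, forcing Type~3. Your proposal contains nothing playing this role.

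Two further issues are asserted but not justified. The ``at most twice for Type~3'' bound does not come from a box being set-valued (indeed, two entries of the same box produce \emph{distinct} factors, by Remark~\ref{r:degree 1 in tableau}); it arises from multiplying the two coefficients $j^{\lambda,\phi^+}_\nu$ and $j^{\nu,\phi^-}_\rho$ in Corollary~\ref{C: Unique nu}, each of which contributes Type~3 terms at most once. And the dichotomy ``for each $\mu$ only Types 1 or 2 appear'' requires the vanishing Lemma~\ref{L: General vanish} and the case analysis of Theorem~\ref{t:Krefined-ext} on the sign of $\phi_q$ (where $q$ is the last diagonal cell of $\lambda$); it is not a consequence of a ``box-by-box accounting'' or of the support of $\mu$ alone. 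Finally, you punt the identification of your generating function with the actual coefficient to a bijection or a $\pi_i$-recursion, but the paper's mechanism is more direct: the decomposition in Lemma~\ref{l:K-decompose} of a set-valued tableau into its non-positive part $T_-$ and positive part $T_+$ gives the expansion of $G^\phi_\lambda$ into products of $G_\nu$'s and skew flagged pieces in one stroke. Without that (or some replacement for it), the proposal has a genuine gap.
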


Our proof is combinatorial and implicitly gives a tableau formula for $j^w_{\mu}(\beta;\bfy)$.

The condition that $j^w_\mu$ can be expressed without using terms of both Type 1 and Type 2 does not appear in Anderson's work, and does not hold for $j^w_\mu(\beta;\bfy)$ when $w$ is not vexillary.
For example, with $\overline{k} = - k$ the non-vexillary permutation $w = s_2 s_1 s_{\overline{1}} s_0$ has
    \[
    j_{(1)}^w(\beta;\bfy) =  (y_1 \ominus y_2) (y_{\overline{1}} \ominus y_0) (y_1 \ominus y_0), 
    \]
    which involves terms of all three types.

As a corollary, setting $\beta = 0$ recovers the cohomology case.
Here $y_i \ominus y_j = y_i - y_j$. Define
\[
\cev{\fkS}_w(\bfx;\bfy) = \cev{\fkG}_w(0;\bfx;\bfy),  \quad s_\lambda(\bfx_-;\bfy) = G_\lambda(0;\bfx_-;\bfy), \quad j^w_\mu(\bfy) = j^{w}_\mu(0;\bfy).
\]
Then Theorem~\ref{t:Kmain} specializes to a  description of the \emph{double Edelman--Greene coefficient} $j^w_\mu(\bfy)$ from~\cite{lam2021back}, providing the first Graham positive combinatorial description of such coefficients as well.

Our approach to proving Theorem~\ref{t:Kmain} is based on tableau formulas for vexillary backstable double $\beta$--Grothendieck polynomials. 
By~\cite{wachs1985flagged}, 
each vexillary permutation $w$ is determined by its associated partition $\lambda(w)$ and flag $\phi(w)$.
For $w$ vexillary $\fkG_w(\beta;\bfx,\bfy) = G^{\phi(w)}_{\lambda(w)}(\beta;\bfx;\bfy)$, which is a \emph{flagged double stable $\beta$--Grothendieck function}.
Working in this setting, our proof of Theorem~\ref{t:Kmain} has three main steps.
We first establish the positivity of vexillary $j^w_\mu(\beta;\bfy)$ when $\phi(w)$ contains only non-negative integers.
Then, we use an extension $\tilde{\omega}$ of the $\omega$ involution for symmetric functions to cover the case where $\phi(w)$ has only non-positive integers.
Lastly, we combine and refine these previous arguments combine to prove Theorem~\ref{t:Krefined-ext}, a slight strengthening of Theorem~\ref{t:Kmain}.

\ 

\noindent \textbf{Organization:} In Section~\ref{s:background}, we give the necessary background to state Theorems~\ref{t:Kmain} precisely.
We prove special cases  of Theorem~\ref{t:Kmain} in Section~\ref{s:main}, then give a complete proof in Section~\ref{s:tableaux}.
We conclude with some final remarks in Section~\ref{s:remarks}.

\section{Preliminaries}
\label{s:background}
Write $\ZZ_{+} = \{i \in \ZZ : i \geq 1\}$, $\ZZ_{-} = \{j \in \ZZ : j \leq 0\}$ and $\ZZ_{\geq 0} = \ZZ_{+} \cup \{0\}$. 
Let $\mathcal{S}$ be the set of non-empty finite subsets of $\ZZ$, $\mathcal{S}_+$ the set of non-empty finite subsets of $\ZZ_+$, and $\mathcal{S}_{-}$ the set of non-empty finite subsets of $\ZZ_{-}$. 
Let $\overline{k} = -k$ and recall $\prec$ is the order on $\ZZ$ with $1 \prec 2 \prec \ldots \prec \overline{2} \prec \overline{1} \prec 0$.
Let $\bfx = (x_i)_{i \in \ZZ}$ and $\bfy = (y_j)_{j \in \ZZ}$ be commuting variables.
Write $\bfx_+ = (x_i)_{i \geq 1}$ and $\bfx_- = (x_i)_{i \leq 0}$.
Use $\bfx \mapsto \bfy$ to denote the substitution $x_i \mapsto y_i$ for $i \in \ZZ$, and $\bfx \leftrightarrow \bfy$ when swapping $x_j \leftrightarrow y_j$ for $j \in \ZZ$.

\subsection{Tableaux}
\label{ss:tabs}
A {\em partition} is a sequence $\lambda = (\lambda_1 \geq \ldots \geq \lambda_\ell)$ of weakly decreasing positive integers.
Here $\lambda$ has {\em size} $|\lambda| = \sum \lambda_i$ and {\em length} $\ell(\lambda) = \ell$, the number of nonzero {\em parts}.
Identify each partition $\lambda$ with its {\em Young diagram} 
\[
\{(r,c) : 1 \leq r \leq \ell(\lambda), \; \ 1 \leq c \leq \lambda_r\}
\]
whose elements are called {\em cells}, which we plot using matrix coordinates.
Viewing partitions as Young diagrams, for $\mu \subseteq \lambda$ the \emph{skew diagram} $\lambda / \mu$ is the set of cells in $\lambda$ that are not in $\mu$.

For $\mu \subseteq \lambda$, a $\mathcal{S}$-{\em tableau of shape} $\lambda / \mu$ is a filling $T: \lambda / \mu \rightarrow \mathcal{S}$ of the cells in $\lambda / \mu$ by integers.
Let $a \ominus b = \frac{a-b}{1+\beta b}$.
Each $\mathcal{S}$-tableau is assigned the monomial weight
\begin{equation}
\label{eq:tab-wt}
\wt(T) = \beta^{-|\lambda/\mu|}\prod_{(r,c) \in \lambda / \mu} \prod_{i \in T(r,c)} \beta (x_i \ominus y_{i + c -r}).
\end{equation}
A $\mathcal{S}$-tableau $T$ of shape $\lambda/\mu$ is {\em semistandard} if
\[
\max T(r,c) \leq \min T(r,c+1) \quad \mbox{and} \quad \max T(c,r) < \min T(r+1,c)
\]
whenever $(r,c),(r,c+1)$ or $(r,c),(r+1,c)$ are in $\lambda/\mu$.
This means $T$ increases strictly down columns and weakly across rows.
Write $\SetSSYT(\lambda / \mu)$ for the set of semistandard $\mathcal{S}$-tableaux of shape $\lambda / \mu$.
More generally, for $\mathcal{T}$ a set of tableaux, let $\mathcal{T}_+$ and $\mathcal{T}_-$ be those with entries in $\mathcal{S}_+$ and $\mathcal{S}_-$ respectively, e.g., $\SetSSYT_+(\lambda)$ is the set of semistandard $\mathcal{S}$--tableaux of shape $\lambda$ whose entries are sets of positive integers.

\begin{rmk}
\label{r:degree 1 in tableau}
For any $T \in \SetSSYT(\lambda/\mu)$, the weights of two elements are different.
% To see this, note elements with different values must have different weights, and that elements with the same value cannot occur in the same cell.
% For distinct cells $(r,c)$ and $(r', c')$ to contain elements with the same weight, we have some $i \in T(r,c) \cap T(r',c')$
% and $r-c = r' - c'$.
% The latter implies $(r,c)$ and $(r',c')$ are on the same diagonal,
% in which case the former is impossible when $T$
% is semistandard.
\end{rmk}

\begin{ex}
\label{e:tableau}
The following $T$ is a semistandard $\mathcal{S}$-tableau of shape $(4,3,2)$:
\begin{equation}
\label{eq:wt}
T = \vcenter{\hbox{\begin{ytableau}
 \scriptstyle\overline{3},\overline{2} & \scriptstyle\overline{2},0 & \scriptstyle 0 & \scriptstyle 1\\
 \scriptstyle\overline{1} & \scriptstyle 1 & \scriptstyle 3\\
 \scriptstyle 0,2 & \scriptstyle 2
 \end{ytableau}}}
 \quad \mbox{has cell weights} \quad
 \vcenter{\hbox{
 \begin{tikzpicture}[scale=.78]
 	\node at (0,0) { $\scriptstyle \beta^2(x_{\overline{3}} \ominus y_{\overline{3}})(x_{\overline{2}} \ominus y_{\overline{2}})$};
 	\node at (4,0) { $\scriptstyle \beta^2(x_{\overline{2}} \ominus y_{\overline{1}})(x_0 \ominus y_{1})$};
  	\node at (7,0) { $\scriptstyle \beta(x_{0} \ominus y_{2})$};
   	\node at (9,0) { $\scriptstyle \beta(x_{1} \ominus y_{4})$};
   	
 	\node at (0,-1) { $\scriptstyle \beta(x_{\overline{1}} \ominus y_{\overline{2}})$};
 	\node at (4,-1) { $\scriptstyle \beta(x_1 \ominus y_{1})$};
  	\node at (7,-1) { $\scriptstyle \beta(x_{3} \ominus y_{4})$};
  	
 	\node at (0,-2) { $\scriptstyle \beta^2(x_{0} \ominus y_{\overline{2}})(x_{2} \ominus y_{0})$};
 	\node at (4,-2) { $\scriptstyle \beta(x_{2} \ominus y_{1})$};
 	
 	\draw (10,0.5) -- (-2,0.5) -- (-2, -2.5) -- (6, -2.5);
 	\draw (2,0.5) -- (2, -2.5);
   	\draw (6,0.5) -- (6, -2.5);
   	\draw (8,0.5) -- (8, -1.5);
   	\draw (10,0.5) -- (10, -0.5);
    \draw (-2,-0.5) -- (10,-0.5);
    \draw (-2,-1.5) -- (8,-1.5);

 \end{tikzpicture}}}.
\end{equation}
Then $\wt(T)$ is the product of $\beta^{-9}$ and the cell weights.
\end{ex}

We often specify the largest entry allowed for each row using a weakly increasing sequence $\phi = (\phi_1 \leq \ldots \leq \phi_\ell)$, called a {\em flag}. The \emph{$\phi$--flagged semistandard $\mathcal{S}$--tableaux} of shape $\lambda / \mu$ are
\[
\SetSSYT^\phi(\lambda / \mu) = \{T \in \SetSSYT(\lambda / \mu): \max T(r,c) \leq \phi_r \mbox{ for all } (r,c) \in \lambda / \mu \}.
\]
For instance, with $T$ as in~\eqref{eq:wt} we have $T \in \SetSSYT^{(1,3,3)}((4,3,2))$ and $T \notin \SetSSYT^{(1,2,3)}((4,3,2))$.

\subsection{Permutations}
\label{ss:perms}
Let $S_\ZZ$ denote the set of bijections $w: \ZZ \rightarrow \ZZ$ whose \emph{support} $\supp(w) = \{k \in \ZZ : w(k) \neq k\}$ is finite.
Write $S_\infty$ for the subset of $S_\ZZ$ with support contained in $\ZZ_{+}$.
Let $s_i$ denote the simple transposition $(i,i+1)$, and note that $S_\ZZ$ is generated by the $s_i$ for $i \in \ZZ$ with relations
\begin{equation}
s_i^2 = 1, \quad s_i s_j = s_j s_i\ \mbox{for}\ |i -j| > 1, \quad s_i s_{i+1} s_i = s_{i+1} s_i s_{i+1}\ \mbox{for} \ i \in \ZZ,    
\label{eq:braid-relations}
\end{equation}
the latter two called \emph{braid relations}.
Therefore, each $w \in S_\ZZ$ can be expressed as $w = s_{i_1} \dots s_{i_p}$.
Such an expression is called \emph{reduced} if it is of minimal length with associated \emph{reduced word} $(i_1,\dots,i_p)$. The value $i$ is a {\em descent} in $w \in S_\ZZ$ if $w(i) > w(i+1)$, and $\des(w) = \{i: w(i) > w(i+1)\}$ is the \emph{descent set} of $w$.
Say $w$ is $k$--{\em Grassmannian} if it has at most one descent in position $k$.
There is a bijection $\lambda \mapsto w_\lambda$ from partitions to $0$--Grassmannian permutations with $w_\lambda(i) = i + \lambda_{1-i}$ for $i \leq 0$ and $w(\lambda) = i - \lambda_i'$ for $i > 0$.

We make extensive use of two automorphisms of $S_\ZZ$: the map $\neg$ is defined by $\neg s_i = s_{\overline{i}}$ for $i \in \ZZ$, while $\iota$ is defined by $\iota(s_i) = s_{i+1}$ for $i \in \ZZ$.
The map $\neg$ is closely related to the better known automorphism conjugation by the long element.
Recall $S_n$ is the set of permutations with support in $\{1,2,\dots,n\}$.
For $w \in S_n$ and $w_0 = n \dots21$, recall we obtain $w_0 w w_0$ from $w$ by mapping $s_i$ to $s_{n-i}$ in any of its expressions.
Therefore $w_0 w w_0 = \iota^n \circ \neg(w)$.

Say $w \in S_\ZZ$ is {\em vexillary} if there are no $i < j < k < \ell$ with $w(j) < w(i) < w(\ell) < w(k)$.
Each vexillary permutation $w \in S_\ZZ$ determines a shape $\lambda(w)$ and flag $\phi(w)$ of the same length, as demonstrated by Wachs~\cite{wachs1985flagged}.
Let $I_k(w) = \{j > k : w(k) > w(j)\}$, $c_k(w) = |I_k(w)|$ and $p_k(w) = \min I_k(w)-1$ where $p_k(w)$ is not defined when $I_k(w)$ is empty.
The shape $\lambda(w)$ is obtained by sorting the \emph{code} $c(w) = (\dots, c_{\overline{1}}(w),c_0(w), c_1(w),\dots)$ in decreasing order, and the flag $\phi(w)$ is obtained by arranging the $p_k(w)$'s in increasing order.
We say flags $\phi$ and $\phi'$ are \emph{equivalent} with respect to the partition $\lambda$ if they are both compatible with $\lambda$ and $\SSYT^\phi(\lambda) = \SSYT^{\phi'}(\lambda)$.
For example, $w = 345162$ is vexillary with $\lambda(w) = (2,2,2,1)$ and $\phi(w) = (3,3,3,5)$.
Since $\SetSSYT^{(3,3,3,5)}((2,2,2,1)) = \SetSSYT^{(1,2,3,5)}((2,2,2,1))$, the flags $(3,3,3,5)$ and $(1,2,3,5)$ are equivalent with respect to $(2,2,2,1)$.
Other authors, e.g.~\cite{weigandt2021bumpless}, use the latter as the flag for $w$, but our choice of conventions is necessary for Lemma~\ref{l:neg-flag}.

The partition $\lambda(w)$ and flag $\phi(w)$ obtained from a vexillary $w \in S_\ZZ$ satisfy
\begin{equation}
\label{eq:compatible}
    \phi(w)_{i+1} - \phi(w)_i \leq \lambda(w)_i - \lambda(w)_{i+1} + 1 \quad \text{for each} \quad 1 \leq i \leq \ell(\lambda(w)).
\end{equation}
When this holds, we say the partition $\lambda$ and flag $\phi$ are {\em compatible}.
Let $w_{\lambda,\phi}$ denote the permutation determined by the subsequent result.

\begin{prop}[{\cite[(1.37) and (1.38)]{macdonald1991notes}}]
\label{p:comp-implies-vex}
The flag $\phi$ is compatible with the partition $\lambda$ if and only if there exists a vexillary permutation $w \in S_\ZZ$ so that $\lambda = \lambda(w)$ and $\phi$ is equivalent to $\phi(w)$ with respect to $\lambda$.
\end{prop}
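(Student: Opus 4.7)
The plan is to establish both directions of the biconditional by working directly with the Lehmer code $c(w)$ and the auxiliary statistics $p_k(w) = \min I_k(w) - 1$, leveraging the $2143$-pattern-avoidance definition of vexillarity.

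For the forward direction (vexillary $\Rightarrow$ compatible), suppose $w$ is vexillary. I would first show the positions $k$ with $c_k(w) > 0$ admit an ordering $k_1, k_2, \ldots, k_\ell$ making $(c_{k_i}(w))_i$ weakly decreasing (producing $\lambda(w)$) and $(p_{k_i}(w))_i$ weakly increasing (producing $\phi(w)$ in the paper's convention).  The existence of this consistent ordering is the combinatorial content of vexillarity: a mismatch between the two orderings — say two indices with the code values increasing while the $p$-values decrease — would force a $2143$ pattern in $w$ drawn from inversions in the corresponding sets $I_k(w)$.  Once the ordering is fixed, the compatibility inequality $\phi(w)_{i+1}-\phi(w)_i \leq \lambda(w)_i - \lambda(w)_{i+1} + 1$ follows from comparing $I_{k_i}(w)$ and $I_{k_{i+1}}(w)$, whose symmetric difference lies in a window whose size is controlled by $p_{k_{i+1}}(w) - p_{k_i}(w)$.

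For the reverse direction (compatible $\Rightarrow$ vexillary), given a compatible pair $(\lambda,\phi)$, I would construct the desired permutation by specifying its code.  Inductively (say from $i = \ell$ down to $i = 1$) choose positions $k_i$ so that the resulting code yields $c_{k_i}(w) = \lambda_i$ and $p_{k_i}(w) = \phi_i$, restricting $k_i$ to the window $\{\phi_i - \lambda_i + 1, \ldots, \phi_i\}$.  Compatibility \eqref{eq:compatible} guarantees that at each step there is a choice available that is consistent with previously placed values.  The resulting $w$ satisfies $\lambda(w) = \lambda$ by construction, and its vexillarity follows from the monotone way the code is assembled, which structurally prevents a $2143$ pattern.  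The recovered flag $\phi(w)$ may differ from $\phi$, but only at indices where $k_i$ was not pinned down; in those cases the alternatives coincide on $\SSYT^\phi(\lambda)$ and are therefore equivalent in the sense of Section~\ref{ss:tabs}.

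The main obstacle is the constructive reverse direction: verifying that the inductive placement always succeeds and produces a $2143$-avoiding permutation.  The compatibility inequality is precisely the arithmetic margin required — it ensures successive $k_i$'s can be chosen without collision and without introducing forbidden patterns.  A secondary technicality is the passage from equality to equivalence of flags, which reflects the fact that distinct compatible $(\lambda,\phi)$ can correspond to the same vexillary $w$ whenever a flag entry lies in a range left undetermined by the tableau condition.
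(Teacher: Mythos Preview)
Your plan is a reasonable outline of a direct combinatorial proof, but it takes a very different route from the paper.  The paper does not prove this proposition from scratch: it simply cites Macdonald~\cite[(1.37) and (1.38)]{macdonald1991notes} for the case where $\phi$ has only positive entries (i.e., $w \in S_\infty$), and then observes that the general $S_\ZZ$ case reduces to this one via the shift automorphism $\iota$, since $\iota(w)$ is vexillary with $\phi(\iota(w))$ obtained from $\phi(w)$ by incrementing every entry.  Your approach instead reconstructs the Macdonald/Wachs argument itself---arguing directly with the code, the sets $I_k(w)$, and $2143$-avoidance---which is more self-contained but considerably more work.  The paper's approach buys economy and avoids the delicate inductive construction you describe in the reverse direction; your approach would buy independence from the cited literature.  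If you pursue your plan, the main point needing care is the reverse direction: the claim that compatibility guarantees a consistent placement of the $k_i$'s and that the result avoids $2143$ is exactly where Macdonald's argument does real work, and your sketch (``compatibility is precisely the arithmetic margin required'') would need to be made precise.
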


The above result is implicit in work of Wachs~\cite{wachs1985flagged} and proved in Macdonald~\cite{macdonald1991notes} only for positive flags.
To see the result holds for $w \in S_\ZZ$ vexillary, note that $\iota(w)$ is vexillary and $\phi(\iota(w))$ is obtained from $\phi(w)$ by incrementing each entry by one.
Since this process is reversible, by applying $\iota$ sufficiently many times we can reduce to the case where $\phi$ has all positive entries.

Recall $\lambda'$ is the transpose of $\lambda$.
The following result translates~\cite[(1.42)]{macdonald1991notes} from conjugation by $w_0$ to $\neg$.

\begin{lem}
\label{l:neg-flag}
    The permutation $\neg w_{\lambda,\phi}$ is vexillary with shape $\lambda'$ and flag $\psi = \phi(\neg w_{\lambda,\phi})$, where $\psi_i = -\phi_{\lambda_i'}$.
\end{lem}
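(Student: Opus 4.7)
The plan is to translate Macdonald's \cite[(1.42)]{macdonald1991notes}, which describes the shape and flag of $w_0 w w_0$ for $w \in S_n$, into a statement about $\neg w$ via the identity $w_0 w w_0 = \iota^n \circ \neg(w)$ recorded in the preceding subsection. A direct check from $\neg s_i = s_{-i}$ and $\iota s_j = s_{j+1}$ yields the commutation $\neg \circ \iota^k = \iota^{-k} \circ \neg$ on $S_\ZZ$, which is what lets us trade between the two automorphisms cleanly.

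For the positive-flag base case, write $w := w_{\lambda,\phi} \in S_n$. Macdonald's result gives that $w_0 w w_0$ is vexillary of shape $\lambda'$ with flag whose $i$-th entry is $n - \phi_{\lambda'_i}$. Since $\neg w = \iota^{-n}(w_0 w w_0)$, and $\iota^{-n}$ preserves vexillarity and shape while subtracting $n$ from each flag entry (both facts noted just before the lemma), we conclude that $\neg w$ is vexillary of shape $\lambda'$ with flag $\psi_i = (n - \phi_{\lambda'_i}) - n = -\phi_{\lambda'_i}$.

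For general $\phi$, choose $N$ large enough that $\phi + N$ has all positive entries, and note that $\iota^N w_{\lambda,\phi} = w_{\lambda,\phi+N}$ since a vexillary permutation is determined by its shape and flag. The positive-flag case applied to $w_{\lambda,\phi+N}$ yields that $\neg(\iota^N w_{\lambda,\phi})$ is vexillary of shape $\lambda'$ with flag entries $-(\phi_{\lambda'_i} + N)$. Applying the commutation to write $\neg w_{\lambda,\phi} = \iota^N\bigl(\neg(\iota^N w_{\lambda,\phi})\bigr)$ and using that $\iota^N$ adds $N$ to each flag entry, we find $\neg w_{\lambda,\phi}$ is vexillary of shape $\lambda'$ with flag $-\phi_{\lambda'_i}$, as claimed. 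The only mild obstacle is verifying that $\iota^k$ interacts with shape and flag as advertised; both are direct from the elementary identities $c_k(\iota w) = c_{k-1}(w)$ and $p_k(\iota w) = p_{k-1}(w) + 1$, and once these are in hand the whole proof is essentially bookkeeping around Macdonald's base case.
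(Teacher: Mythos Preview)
Your proof is correct and follows the same route as the paper: cite Macdonald's \cite[(1.42)]{macdonald1991notes} for $w_0ww_0$, then transport it to $\neg w$ via $\neg w=\iota^{-n}(w_0ww_0)$. Two small remarks. First, you state Macdonald's formula already in the entry-wise form ``the $i$-th flag entry of $w_0ww_0$ is $n-\phi_{\lambda'_i}$''; Macdonald (and the paper) record it as the list of distinct values $g_i=n-f_{k+1-i}$, and the paper's final sentence is exactly the translation between the two using the convention $\phi_i=\phi_j$ whenever $\lambda_i=\lambda_j$. You are implicitly absorbing that step. Second, your explicit reduction from arbitrary $\phi$ to positive $\phi$ via $\iota^N$ and the commutation $\neg\circ\iota^k=\iota^{-k}\circ\neg$ is a genuine addition: the paper's written proof only treats $w\in S_n$, leaving the general $S_\ZZ$ case to the reader by analogy with the paragraph preceding the lemma.
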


\begin{proof}
    Let $w \in S_n$ be vexillary with $\phi(w) = (f_1^{m_1},\dots,f_k^{m_k})$, so $f_i$ occurs $m_i$ times.
    Then~\cite[(1.42)]{macdonald1991notes} asserts $\lambda(w_0ww_0) = \lambda(w)'$ and $\phi(w_0ww_0) = (g_1^{n_1},\dots,g_k^{n_k})$ where $g_i = n-f_{k+1-i}$.
    Since $\neg(w) = \iota^{-n}(w_0ww_0)$, we see $\phi(\neg(w))$ has $i$th distinct term
    $g_i  - n = -f_{k+1-i}$.
    Since our conventions give $\phi_i = \phi_j$ when $\lambda_i = \lambda_j$, we see for $\psi_j = g_i$ that $\phi_{\lambda'_j} = f_{k+1-i}$.
\end{proof}

\subsection{Functions}
\label{ss:backstable-Schubert}

There are several equivalent conventions for defining ($\beta$--)Grothendieck polynomials.
Ours are equivalent to those from~\cite{lam2021Ktheory} after setting $\beta = -1$.
To begin, we define the {\em double stable $\beta$--Grothendieck} and {\em $\phi$-flagged double stable $\beta$--Grothendieck} functions as 
\begin{equation}
\label{eq:double-Schur-tab}
    G_\lambda(\beta;\bfx_{-};\bfy) = \sum_{T \in \SetSSYT_{-}(\lambda)} \wt(T) \quad \text{and} \quad G^{\phi}_\lambda(\beta;\bfx;\bfy) = \sum_{T \in \SetSSYT^\phi(\lambda)} \wt(T).
\end{equation}

%It follows from definitions that if one has $h(\bfx;\bfy) = \sum_{i=1}^n a_i(\bfx,\bfy) g_i(\bfx;\bfy)$ with each $j_i(\bfy) = a_i(\bfy;\bfy)$ Graham positive and each $g_i$ expanding Graham positively, then $h$ itself expands Graham positively. 

To define backstable Grothendieck polynomials, we first define the usual double Grothendieck polynomial.
Each $\sigma \in S_\ZZ$ acts on the formal power series $f(\bfx,\bfy)$ by permuting the $\bfx$-variables, denoted $ f(\sigma \cdot \bfx;\bfy)$.
Let $\partial_i$ be the $i$th \emph{divided difference operator}, which acts on the polynomials by
\[
\partial_i(f(\bfx)) = \frac{f(\bfx) -  f(s_i \cdot \bfx)}{x_i - x_{i+1}},
\]
and define the $i$th \emph{$\beta$--isobaric divided difference operator} $\pi_i(f(\bfx)) = \partial_i((1+\beta x_i)f(\bfx))$.
The $\beta$--isobaric divided difference operators satisfy $\pi_i^2 = \beta \pi_i$ and the braid relations from~\eqref{eq:braid-relations}.
Therefore, we can define $\pi_w = \pi_{i_1} \dots \pi_{i_p}$ where $(i_1,\dots,i_p)$ is any reduced word for $w$.
The \emph{double $\beta$--Grothendieck polynomial} for a permutation $w \in S_n$ is 
\begin{equation}
    \label{eq:double-Schubert}
    \fkG_w(\beta;\bfx_+;\bfy_+) = \pi_{w^{-1}w_0} \prod_{\substack{1 \leq i,j \leq n-1,\\ i+j \leq n}} x_i \ominus y_j
\end{equation}
where $w_0 = n \dots 21$.
It is a remarkable consequence of this definition that $\fkG(\beta;\bfx_+;\bfy_+)$ is independent of $n$, hence we can view $w$ as an element of $S_\infty$.

Recall $\iota:S_\ZZ \to S_\ZZ$ where $\iota(s_i) = s_{i+1}$ for all $i \in \ZZ$, or equivalently $\iota(w)(i+1) = w(i)+1$.
Let $\gamma$ shift variables up by one: $\gamma(x_i) = x_{i+1}$ and $\gamma(y_i) = y_{i+1}$, and note $\gamma$ is invertible.
The \emph{backstable double $\beta$--Grothendieck polynomial} for $w \in S_\ZZ$ is
\[
\cev{\fkG}_w(\beta;\bfx;\bfy) = \lim_{p \to \infty} \gamma^{-p} \fkG_{\iota^p(w)}(\beta;\bfx_+;\bfy_+).
\]
Note this definition does not depend on $\fkG_w(\beta;\bfx;\bfy)$ when $w \notin S_\infty$ as for $p$ sufficiently large $\iota^p(w) \in S_\infty$.
By setting $\beta = 0$, we recover the \emph{double Schubert polynomial} $\fkS_w(\bfx;\bfy) = \fkG_w(0;\bfx;\bfy)$ and \emph{backstable double Schubert polynomial} $\cev{\fkS}(\bfx;\bfy) = \cev{\fkG}_w(0;\bfx;\bfy)$.

For $w$ vexillary, the following is a straightforward consequence of~\cite[Thm 5.8]{knutson2009geometry}.
\begin{prop}
    \label{p:gvex}
    For $w \in S_\ZZ$ vexillary, $\cev{\fkG}_w(\beta;\bfx;\bfy) = G^{\phi(w)}_{\lambda(w)}(\beta;\bfx;\bfy)$.
\end{prop}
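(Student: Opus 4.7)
The plan is to reduce the backstable identity to its finite counterpart via the shift automorphism $\iota$, then invoke the tableau formula of Knutson--Miller--Yong~\cite[Thm~5.8]{knutson2009geometry}, which asserts $\fkG_{w'}(\beta;\bfx_+;\bfy_+) = G^{\phi(w')}_{\lambda(w')}(\beta;\bfx_+;\bfy_+)$ for every vexillary $w' \in S_\infty$. The task is then to transfer this across the $\gamma^{-p}$ limit defining $\cev{\fkG}_w$.

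First I would determine how $\lambda$ and $\phi$ transform under $\iota$. From $\iota(w)(m) = w(m-1)+1$, a direct inspection of the inversion sets gives $I_k(\iota(w)) = I_{k-1}(w)+1$, so $c_k(\iota(w)) = c_{k-1}(w)$ and $p_k(\iota(w)) = p_{k-1}(w)+1$. After sorting, this yields $\lambda(\iota^p(w)) = \lambda(w)$ and $\phi(\iota^p(w)) = \phi(w)+p$ componentwise for each $p \geq 0$. Vexillarity is preserved by $\iota$ since it preserves the relative order of values, and $\iota^p(w) \in S_\infty$ once $p$ exceeds $-\min \supp(w)$, so the finite identity applies to $\iota^p(w)$ for all sufficiently large $p$.

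Next I would track $\gamma^{-p}$ through the tableau formula~\eqref{eq:double-Schur-tab}. Since $\gamma^{-p}$ sends $x_i \mapsto x_{i-p}$ and $y_j \mapsto y_{j-p}$, the bijection that subtracts $p$ from every tableau entry identifies
\[
\gamma^{-p} G^{\phi(w)+p}_{\lambda(w)}(\beta;\bfx_+;\bfy_+) = \sum_{T} \wt(T),
\]
where the sum ranges over semistandard $\mathcal{S}$-tableaux of shape $\lambda(w)$ with entries at least $1-p$ and with row $r$ bounded above by $\phi(w)_r$. Letting $p \to \infty$ removes the lower bound and yields $G^{\phi(w)}_{\lambda(w)}(\beta;\bfx;\bfy)$. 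Combining this with the limit definition of $\cev{\fkG}_w$ finishes the argument.

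The main obstacle is the bookkeeping of codes and flags under $\iota$; once that verification is done, the limit interchange is clean because each individual tableau enters the truncated sum as soon as $1-p$ drops below its smallest entry, and the weight of that tableau is unaffected by further increases in $p$ after the $\gamma^{-p}$ relabeling.
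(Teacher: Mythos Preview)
Your proposal is correct and follows essentially the same approach as the paper: invoke~\cite[Thm~5.8]{knutson2009geometry} for the finite case in $S_\infty$, observe that $\iota$ increments every flag entry by one while $\gamma^{-1}$ decrements every variable index by one, and pass to the limit defining $\cev{\fkG}_w$. Your version is more explicit about the bookkeeping of $I_k$, $c_k$, $p_k$ under $\iota$ and about the entry-shifting bijection on tableaux, but the structure of the argument is identical.
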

\begin{proof}
By Proposition~\ref{p:comp-implies-vex} we know $\phi$ and $\lambda$ are compatible if and only if there exists $w \in S_\infty$ vexillary with $\lambda(w) = \lambda$ and $\phi(w) = \phi$.
For such $w$ the double Schubert polynomial equality $\fkG_w(\beta;\bfx_+;\bfy_+) = G_\lambda^\phi(\beta;\bfx_+;\bfy_+)$ is~\cite[Thm 5.8]{knutson2009geometry}.
Note $\phi(\iota(w))$ is obtained from $\phi(w)$ by incrementing each entry by one and $\gamma$ decrements every variable index by one.
    The result now follows from the definition of $\cev{\fkG}_w(\beta;\bfx;\bfy)$.
\end{proof}

An important special case is that of the $0$--Grassmannian permutations.

\begin{prop}[{\cite[Lem~5.32]{lam2021Ktheory}}]
\label{p:Kgrassmannian}
    For $\lambda$ a partition,
$
G_\lambda(\beta;\bfx_-;\bfy) = \cev{\fkG}_{w_\lambda}(\beta;\bfx;\bfy).
$
\end{prop}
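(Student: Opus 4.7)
The plan is to derive the identity directly from Proposition~\ref{p:gvex} by computing $\lambda(w_\lambda)$ and $\phi(w_\lambda)$ and checking that the resulting flag forces all tableau entries into $\mathcal{S}_-$.

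First I would verify that $w_\lambda$ is vexillary. Since $w_\lambda$ has a single descent at position $0$, its values are increasing on $\ZZ_{\leq 0}$ and separately on $\ZZ_+$. Any inversion $w_\lambda(i) > w_\lambda(j)$ with $i < j$ must therefore straddle $0$, i.e.\ $i \leq 0 < j$. A pattern $i < j < k < \ell$ with $w_\lambda(j) < w_\lambda(i)$ and $w_\lambda(\ell) < w_\lambda(k)$ would then force both $j > 0$ and $k \leq 0$, contradicting $j < k$. So $w_\lambda$ is vexillary, and Proposition~\ref{p:gvex} yields
\[
\cev{\fkG}_{w_\lambda}(\beta;\bfx;\bfy) = G^{\phi(w_\lambda)}_{\lambda(w_\lambda)}(\beta;\bfx;\bfy).
\]

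Next I would compute the code and flag from the explicit formulas $w_\lambda(i) = i + \lambda_{1-i}$ for $i \leq 0$ and $w_\lambda(i) = i - \lambda_i'$ for $i > 0$. For $k > 0$ the increasing behavior on $\ZZ_+$ gives $c_k(w_\lambda) = 0$. For $k \leq 0$ the vexillary argument above shows $I_k(w_\lambda) \subseteq \ZZ_+$, and the inversion condition $k + \lambda_{1-k} > j - \lambda_j'$ rewrites as $\lambda_j' + \lambda_{1-k} > j - k$; counting $j > 0$ for which this holds, using $\lambda_j' = |\{i : \lambda_i \geq j\}|$, yields $c_k(w_\lambda) = \lambda_{1-k}$. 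Sorting the code decreasingly gives $\lambda(w_\lambda) = \lambda$. For the flag I need $p_k(w_\lambda) = 0$ for $1 - \ell(\lambda) \leq k \leq 0$, i.e.\ $1 \in I_k(w_\lambda)$. This reduces to $\lambda_{1-k} + \lambda_1' \geq 2 - k$, which follows from $\lambda_1' = \ell(\lambda) \geq 1 - k$ together with $\lambda_{1-k} \geq 1$. Hence $\phi(w_\lambda) = (0, 0, \dots, 0)$.

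Finally, the flag condition $\max T(r,c) \leq 0$ is exactly the condition that every entry of $T$ be a subset of $\ZZ_-$, so $\SetSSYT^{\phi(w_\lambda)}(\lambda) = \SetSSYT_-(\lambda)$, and comparing the tableau formulas in~\eqref{eq:double-Schur-tab} gives $G^{\phi(w_\lambda)}_\lambda(\beta;\bfx;\bfy) = G_\lambda(\beta;\bfx_-;\bfy)$, which finishes the proof. The main obstacle is the flag computation: the shape equality $\lambda(w_\lambda) = \lambda$ is essentially the classical Lehmer code statement for Grassmannian permutations, but pinning down $\phi(w_\lambda) \equiv 0$ requires the combinatorial inequality above, which encodes that each nonempty row of $\lambda$ is dominated in length by the number of rows.
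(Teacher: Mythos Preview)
Your argument is correct. The paper does not actually prove this proposition; it simply records it as \cite[Lem~5.32]{lam2021Ktheory} with no argument given. Your route instead derives it internally as a special case of Proposition~\ref{p:gvex}: you check that the $0$--Grassmannian $w_\lambda$ is vexillary with $\lambda(w_\lambda)=\lambda$ and $\phi(w_\lambda)=(0,\dots,0)$, so that the flagged tableau sum over $\SetSSYT^{\phi(w_\lambda)}(\lambda)$ collapses to the sum over $\SetSSYT_-(\lambda)$ defining $G_\lambda(\beta;\bfx_-;\bfy)$. This is a clean observation---it shows the proposition is not an independent input from the literature but already a corollary of the vexillary tableau formula established just above it.

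One small quibble with your closing commentary (not the proof itself): the inequality $\lambda_{1-k}+\lambda_1'\geq 2-k$ follows from $\lambda_1'=\ell(\lambda)\geq 1-k$ (the row index $1-k$ is at most the number of rows) together with $\lambda_{1-k}\geq 1$. It does \emph{not} encode that ``each nonempty row of $\lambda$ is dominated in length by the number of rows,'' which would read $\lambda_r\leq \ell(\lambda)$ and is false in general. The actual inequality you wrote and verified is correct; only the paraphrase at the end is off.
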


%\begin{thm}[{\cite{wachs1985flagged, knutson2009geometry,lam2021Ktheory}}]
%For $w \in S_\ZZ$ vexillary, one has $\cev{\fkG}_w(\beta;\bfx; \bfy) = G_{\lambda(w)}^{\phi(w)}(\beta;\bfx;\bfy)$.
%\end{thm}

%
%We also make use of the backstable version of the well-known identity $\fkS_w(\bfx;\bfy) = \fkS_{w^{-1}}(-\bfy;-\bfx)$, which is~\cite[(6.4)~(iii)]{macdonald1991notes}.
%The proof follows readily from the bumpless pipe dream formula~\cite{lam2021back}.
%\begin{lem}
%\label{l: exchange x y}
%For any $w$,
%$\cev{\fkS}_w(\bfx;\bfy)
%= \cev{\fkS}_{w^{-1}}(\bfy;\bfx) (-1)^{d}$
%where $d$ is the degree of $\cev{\fkS}_w(\bfx;\bfy)$.
%\end{lem}
%\Zach{Do we still need this? I think we won't.}

For variables $\mathbf{z} = (z_i)_{i \in I}$, define
\[
R(\beta;\mathbf{z}) = \ZZ(\beta) \left[ z_i, \frac{1}{1+\beta z_i}: i \in I\right],
\]
so $\beta,\beta^{-1} \in R(\beta;\mathbf{z})$ independent of $\mathbf{z}$.
The double stable $\beta$--Grothendieck functions are closed under multiplication over $R(\beta;\bfy)$~\cite{pechenik2017equivariant} so they form a basis for an algebra, which we denote $\Gamma(\beta;\bfx_-;\bfy)$.

\begin{prop}
\label{p:backstable-grothendieck-ring}
For $w \in S_\ZZ$, $\cev{\fkG}_w(\beta;\bfx;\bfy) \in R(\beta;\bfx;\bfy) \otimes \Gamma(\beta;\bfx;\bfy)$.
\end{prop}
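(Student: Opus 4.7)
The plan is to prove the statement by induction on the length of $w$, using divided-difference operations to reduce to the case of $0$-Grassmannian permutations, where the claim is immediate. For the base case, Proposition~\ref{p:Kgrassmannian} gives $\cev{\fkG}_{w_\lambda} = G_\lambda(\beta;\bfx_-;\bfy)$, which lies in $1\otimes\Gamma(\beta;\bfx_-;\bfy)\subset R(\beta;\bfx;\bfy)\otimes\Gamma(\beta;\bfx_-;\bfy)$ under the natural inclusion.

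For the inductive step, I would first promote the standard recursion $\pi_i\fkG_v=\fkG_{vs_i}$ (valid for $v\in S_\infty$ and $i\notin\des(v)$) to the backstable setting. Since $\gamma^{-1}\circ\pi_i=\pi_{i-1}\circ\gamma^{-1}$, taking the $p\to\infty$ limit of the identity for $\iota^p w$ yields $\pi_i\cev{\fkG}_w=\cev{\fkG}_{ws_i}$ when $i\notin\des(w)$ and $\pi_i\cev{\fkG}_w=\beta\cev{\fkG}_w$ otherwise. Any $w\in S_\ZZ$ can be reached from a sufficiently large $0$-Grassmannian $w_\lambda\geq w$ in Bruhat order by a finite chain of such operations, so the problem reduces to showing each $\pi_i$ preserves the ring $\mathcal{R}:=R(\beta;\bfx;\bfy)\otimes_{R(\beta;\bfy)}\Gamma(\beta;\bfx_-;\bfy)$.

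To check closure of $\mathcal{R}$ under $\pi_i$, note that when $i\neq 0$ the transposition $s_i$ acts on two variables on the same side of zero. Every $G_\mu(\beta;\bfx_-;\bfy)$ is then $s_i$-invariant (by symmetry in $\bfx_-$ when $i\leq -1$, or trivially when $i\geq 1$), so the Leibniz identity for $\partial_i$ collapses to $\pi_i(f\otimes g)=\pi_i(f)\otimes g$; a direct calculation then shows $\pi_i$ sends each generator $x_j$, $y_j$, $\tfrac{1}{1+\beta x_j}$, $\tfrac{1}{1+\beta y_j}$ of $R(\beta;\bfx;\bfy)$ back into $R(\beta;\bfx;\bfy)$, giving closure.

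The main obstacle is the case $i=0$, which mixes $x_0\in\bfx_-$ with $x_1\in\bfx_+$ and so does not leave $\Gamma(\beta;\bfx_-;\bfy)$ invariant. I would circumvent this entirely by exploiting the shift $\iota$: for $p$ large enough, $\iota^p w\in S_\infty$ and a chain from $w_{\iota^p\lambda}$ down to $\iota^p w$ uses only operators $\pi_i$ with $i\geq 1$, for which the previous paragraph applies. The result for $\cev{\fkG}_{\iota^p w}$ is then transported back to $\cev{\fkG}_w$ via $\gamma^{-p}$, which visibly preserves $\mathcal{R}$ by its action on generators. Verifying that the shift preserves the $\otimes$-decomposition cleanly, rather than just the ambient formal-series ring, is the step I expect to require the most care.
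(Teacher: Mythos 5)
Your induction has a gap at its core: the required weak-order chain from a $0$--Grassmannian down to an arbitrary $w$ does not exist in general. The recursion for $\beta$--isobaric divided differences is $\pi_i\cev{\fkG}_v=\cev{\fkG}_{vs_i}$ when $i\in\des(v)$ (not $i\notin\des(v)$, a small slip in your write-up), so each application of $\pi_i$ strictly decreases Coxeter length. To reach $\cev{\fkG}_w$ from $\cev{\fkG}_{w_\lambda}$ you therefore need $w\leq_R w_\lambda$ in the \emph{right weak order}, equivalently $T_L(w)\subseteq T_L(w_\lambda)$, where $T_L$ is the set of left inversions. For a $0$--Grassmannian $v=w_\lambda$, every transposition in $T_L(v)$ is of the form $(a,b)$ with $v^{-1}(a)>0\geq v^{-1}(b)$. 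Now take $w=s_0s_1s_0$: then $T_L(w)$ contains both $(0,1)$ and $(1,2)$, which would force $v^{-1}(1)\leq 0$ and $v^{-1}(1)>0$ simultaneously, so $w\not\leq_R w_\lambda$ for any partition $\lambda$, and no chain of $\pi_i$'s connects them. The shift $\iota^p$ does not rescue this: it is a length-preserving automorphism, so $\iota^pw\leq_R\iota^p(w_\lambda)$ if and only if $w\leq_R w_\lambda$, and the same obstruction persists. A secondary issue, which you flag yourself, is that even granting the chain, the shifted base case $\cev{\fkG}_{\iota^p(w_\lambda)}=\gamma^pG_\lambda(\beta;\bfx_-;\bfy)$ is a function of $x_1,\dots,x_p$ as well as $\bfx_-$ and is not obviously an element of $R(\beta;\bfx;\bfy)\otimes\Gamma(\beta;\bfx_-;\bfy)$; exhibiting it there would require a coproduct expansion, not a mere relabeling, so the induction would not have a legitimate starting point.

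The paper sidesteps divided differences entirely. It invokes the $0$--Hecke-monoid expansion of $\cev{\fkG}_w(\beta;\bfx;\bfy)$ from Proposition~\ref{p:Kbackstable-expansion}, writing the backstable polynomial as a sum of products $\fkG_{u^{-1}}(\beta;\ominus\bfy)\cdot F_v(\beta;\bfx_-/\bfy)\cdot\fkG_z(\beta;\bfx)$, and then reduces the claim to showing $F_v(\beta;\bfx_-/\bfy)\in\Gamma(\beta;\bfx_-;\bfy)$. That is established by specializing the Hecke expansion to $w=w_\lambda$ and inducting on $|\lambda|$ to isolate the leading term $F_\lambda(\beta;\bfx_-/\bfy)$. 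Your closure observation for $\pi_i$ with $i\neq 0$ is sound (and could be a useful lemma elsewhere), but without a valid base case and chain it does not assemble into a proof of the proposition.
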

Proposition~\ref{p:backstable-grothendieck-ring} is arguably implicit in~\cite{lam2021Ktheory} and implies~\eqref{eq:Kbackstable}; we give a proof in Appendix~\ref{a:functions}.
As a consequence, we have
\[
\cev{\fkG}_w(\beta;\bfx;\bfy) = \sum_\mu a_\mu^{w}(\beta;\bfx;\bfy) \cdot G_\mu(\beta;\bfx_-;\bfy)
\]
with $a^w_\lambda(\beta;\bfx;\bfy) \in R(\beta;\bfx;\bfy)$.
By Proposition~\ref{p:comp-implies-vex}, for $\lambda$ and $\phi$ compatible this implies
\[
G^{\phi}_\lambda(\beta;\bfx;\bfy) = \sum_\mu a^{\lambda,\phi}_\mu(\beta;\bfx;\bfy) \cdot G_\mu(\beta;\bfx_-;\bfy),
\]
again with $a^{\lambda,\phi}_\mu(\beta;\bfx;\bfy) \in R(\beta;\bfx;\bfy)$.
The \emph{double $\beta$--Stanley symmetric function} of $w$ is
\[
F_w(\beta;\bfx_-;\bfy) = \sum_\lambda a_\lambda^{w}(\beta;\bfy;\bfy) \cdot G_\lambda(\beta;\bfx_-;\bfy).
\]
We call $j^w_\lambda(\beta;\bfy) := a_\lambda^{w}(\beta;\bfy;\bfy)$ the \emph{double $\beta$--Edelman--Greene coefficient}.
With $j^\lambda_\mu(\beta;\bfy) := a^\lambda_\mu(\beta;\bfx;\bfy) \mid_{\bfx \mapsto \bfy}$, for $w$ vexillary note 
\begin{equation}
\label{eq:EG-flagged}
j_\mu^{w}(\beta;\bfy) = j^{\lambda(w),\phi(w)}_\mu(\beta;\bfy).
\end{equation}

Define the map $\omega_1:R(\beta;\bfx;\bfy) \to R(\beta;\bfx;\bfy)$ by $\omega_1(x_i) = \ominus x_{1-i}$ and $\omega_1(y_i) = \ominus y_{1-i}$ and the map $\omega_2:\Gamma(\beta;\bfx_-;\bfy) \to \Gamma(\beta;\bfx_-;\bfy)$ by $\omega_2(G_\lambda(\beta;\bfx_-;\bfy)) = G_{\lambda'}(\beta;\bfx_-;\bfy)$.
Then $\tilde{\omega} = \omega_1\otimes \omega_2$ acts on backstable double $\beta$--Grothendieck functions.

\begin{lem}[{\cite[Prop 5.23]{lam2021Ktheory}}]
    \label{l:omega}
    For $w \in S_\ZZ$, 
\[
\tilde{\omega}\left(\cev{\fkG}_w(\beta;\bfx;\bfy)\right) = \cev{\fkG}_{\neg w}(\beta;\bfx;\bfy).
\]
\end{lem}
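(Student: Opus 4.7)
The plan has three steps. First, I would verify that $\tilde{\omega}$ is a well-defined involution on the algebra $R(\beta;\bfx;\bfy) \otimes \Gamma(\beta;\bfx_-;\bfy)$ containing $\cev{\fkG}_w$ by Proposition~\ref{p:backstable-grothendieck-ring}. The identity $\ominus\ominus z = z$ makes $\omega_1$ an involution, and a short direct computation yields the pointwise rule $\omega_1(x_i \ominus y_j) = y_{1-j} \ominus x_{1-i}$. The map $\omega_2$ is an involution on $\Gamma$ by construction, and the two agree on the shared $\bfy$-scalars, so the tensor $\tilde{\omega}$ is well-defined.

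Second, I would prove the identity for vexillary $w$. By Proposition~\ref{p:gvex}, $\cev{\fkG}_w = G^{\phi(w)}_{\lambda(w)}(\beta;\bfx;\bfy)$ and likewise $\cev{\fkG}_{\neg w} = G^{\phi(\neg w)}_{\lambda(\neg w)}(\beta;\bfx;\bfy)$; by Lemma~\ref{l:neg-flag}, $\lambda(\neg w) = \lambda(w)'$ and $\phi(\neg w)_i = -\phi(w)_{\lambda(w)'_i}$. The natural candidate bijection $\SetSSYT^{\phi(w)}(\lambda(w)) \to \SetSSYT^{\phi(\neg w)}(\lambda(\neg w))$ transposes a tableau and replaces each entry $i$ (elementwise in each subset) by $1 - i$. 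The semistandard condition transforms correctly because inequalities reverse under entry negation, and the flag condition matches via the formula in Lemma~\ref{l:neg-flag}. For the weights, a cell at $(r,c)$ in $\lambda(w)$ has content $c-r$ which becomes $r-c$ after transposition, consistent with the index shift in $\omega_1$. The step requires reconciling the $y \ominus x$ against the target $x \ominus y$ after $\omega_1$ acts on the tableau weight, which must be absorbed by the $\omega_2$ action when passing through the $G_\mu$-basis expansion of $G^\phi_\lambda$.

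Third, I would extend from vexillary to arbitrary $w \in S_\ZZ$ by an algebraic argument. Since $\tilde{\omega}$ is a ring automorphism and each $\cev{\fkG}_w$ is determined by the $\pi_i$-recursion defining the double $\beta$-Grothendieck polynomials in~\eqref{eq:double-Schubert}, it suffices to show that $\tilde{\omega}$ intertwines $\pi_i$ with the index-shifted $\pi_{-i}$, matching the action of $\neg$ on simple reflections. Induction on Coxeter length then reduces the general identity to the base case $\cev{\fkG}_e = 1$, or alternatively to the vexillary base case of Step 2. The main obstacle is the weight-matching in Step 2, where reconciling $\omega_1(x_i \ominus y_j) = y_{1-j} \ominus x_{1-i}$ with the desired $\beta(x_{1-i} \ominus y_{1-i+r-c})$ demands genuine use of $\omega_2$ and careful bookkeeping across the $R$- and $\Gamma$-factors; also nontrivial is verifying $\tilde{\omega} \cdot \pi_i = \pi_{-i} \cdot \tilde{\omega}$ up to shift in Step 3, since the $i \mapsto 1-i$ index in $\omega_1$ interacts delicately with the position shifts in the divided difference operators.
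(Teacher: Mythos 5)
The paper does not prove this lemma; it cites it directly from Lam--Lee--Shimozono (\cite[Prop 5.23]{lam2021Ktheory}), so there is no in-paper proof to compare against. Your proposal is therefore a reconstruction from scratch, and it has a structural problem plus two unresolved gaps that you yourself flag.

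The structural problem: Steps 2 and 3 are redundant with each other. If the intertwining $\tilde{\omega}\,\pi_i = \pi_{-i}\,\tilde{\omega}$ of Step 3 holds, induction from $\cev{\fkG}_e = 1$ already gives the result for all $w$, making the vexillary case in Step 2 superfluous. Conversely, Step 2 alone covers only vexillary $w$. Pick one route.

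The gap in Step 2 is more than bookkeeping. The operator $\tilde{\omega} = \omega_1 \otimes \omega_2$ is defined on the tensor product $R(\beta;\bfx;\bfy) \otimes \Gamma(\beta;\bfx_-;\bfy)$, where $\omega_1$ acts on coefficients and $\omega_2$ acts on the basis $\{G_\mu(\beta;\bfx_-;\bfy)\}$. But the tableau formula $G^{\phi(w)}_{\lambda(w)}(\beta;\bfx;\bfy) = \sum_T \wt(T)$ is an expression in the ambient polynomial ring, not a decomposition into the tensor product. Applying $\omega_1$ to individual tableau weights (which involve $\bfx_-$ variables appearing in \emph{both} tensor factors) is not a well-defined way to apply $\tilde{\omega}$; you would first have to expand $G^{\phi}_\lambda$ into the $G_\mu$ basis, which is precisely the nontrivial expansion~\eqref{eq:Kbackstable} the paper is studying, so this risks circularity.

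The gap in Step 3 is that the intertwining claim is asymmetric in a way you don't address. For $i \geq 1$ the operator $\pi_i$ touches only $x_i, x_{i+1} \in \bfx_+$, living cleanly in the $R$ factor; but $\pi_{-i}$ touches $x_{-i}, x_{-i+1} \in \bfx_-$, which appear in both the $R$ factor and the $\Gamma$ factor (the latter being symmetric functions in $\bfx_-$). Showing that $\pi_{-i}$ acts compatibly with the tensor decomposition, and that $\tilde{\omega}$ conjugates $\pi_i$ to $\pi_{-i}$ across this asymmetry, is the real content, and the $\ominus$ in $\omega_1(x_i) = \ominus x_{1-i}$ further complicates the calculation (since $\pi_i$ involves $1+\beta x_i$ and $\omega_1$ sends this to $(1+\beta x_{1-i})^{-1}$). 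You acknowledge both of these as ``nontrivial,'' but they are the entire proof, not details to be filled in. As written the proposal is a plan, not an argument.
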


Combined with Lemma~\ref{l:neg-flag} and Proposition~\ref{p:gvex}, this implies:
\begin{cor}
\label{L: negation flag}
Consider compatible $\lambda$ and $\phi$.
Then $j^{\lambda, \phi}_\mu(\beta;\bfy) = \omega_1(j^{\lambda', \xi}_{\mu'}(\beta;\bfy))$,
where $\xi_i = - \phi_{\lambda'_i}$.
In particular, if $\phi$ has only non-positive
entries, 
then $\xi$ has only non-negative entries. 
\end{cor}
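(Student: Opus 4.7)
The plan is to combine the three key pieces already collected in this section: Proposition \ref{p:gvex} identifies backstable Grothendieck polynomials for vexillary permutations with flagged double stable Grothendieck functions, Lemma \ref{l:neg-flag} tracks the combinatorial data of $\lambda$ and $\phi$ under the involution $\neg$, and Lemma \ref{l:omega} expresses how $\tilde{\omega} = \omega_1 \otimes \omega_2$ intertwines $\neg$ on the level of polynomials. Putting these together should transform the expansion defining $j^{\lambda,\phi}_\mu$ into the expansion defining $j^{\lambda',\xi}_{\mu'}$, up to applications of $\omega_1$ and transpose.

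Concretely, I would start from $w = w_{\lambda,\phi}$ and write
\[
\cev{\fkG}_{w}(\beta;\bfx;\bfy) = G^{\phi}_{\lambda}(\beta;\bfx;\bfy) = \sum_\mu a^{\lambda,\phi}_\mu(\beta;\bfx;\bfy)\, G_\mu(\beta;\bfx_-;\bfy)
\]
using Proposition \ref{p:gvex}. Applying $\tilde{\omega}$, the left side becomes $\cev{\fkG}_{\neg w}(\beta;\bfx;\bfy) = G^{\xi}_{\lambda'}(\beta;\bfx;\bfy)$ by Lemma \ref{l:omega}, Lemma \ref{l:neg-flag}, and another use of Proposition \ref{p:gvex}, while the right side becomes $\sum_\mu \omega_1(a^{\lambda,\phi}_\mu(\beta;\bfx;\bfy))\, G_{\mu'}(\beta;\bfx_-;\bfy)$ by the definition of $\omega_2$. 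Reindexing by $\nu = \mu'$ and comparing with the defining expansion of $G^{\xi}_{\lambda'}$, uniqueness of the $G_\nu$-expansion gives
\[
a^{\lambda',\xi}_\nu(\beta;\bfx;\bfy) = \omega_1\!\left(a^{\lambda,\phi}_{\nu'}(\beta;\bfx;\bfy)\right).
\]

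Next I would specialize $\bfx \mapsto \bfy$. Since $\omega_1$ acts identically on $x_i$ and $y_i$ (both go to $\ominus x_{1-i}$ and $\ominus y_{1-i}$ respectively), this substitution commutes with $\omega_1$, yielding $j^{\lambda',\xi}_{\mu'}(\beta;\bfy) = \omega_1(j^{\lambda,\phi}_{\mu}(\beta;\bfy))$ after setting $\nu = \mu'$. Finally, a short computation shows that $\ominus(\ominus b) = b$, so $\omega_1$ is an involution on $R(\beta;\bfy)$; applying it to both sides gives the desired identity $j^{\lambda,\phi}_\mu(\beta;\bfy) = \omega_1(j^{\lambda',\xi}_{\mu'}(\beta;\bfy))$. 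The ``in particular'' statement is immediate from the formula $\xi_i = -\phi_{\lambda'_i}$.

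The main obstacle is essentially bookkeeping: one must verify that $\omega_1$ commutes with the $\bfx \mapsto \bfy$ substitution and is an involution, and that the transpose in Lemma \ref{l:neg-flag} matches the transpose produced by $\omega_2$, so that reindexing $\mu \leftrightarrow \mu'$ goes through cleanly. None of this is deep, but the chain of involutions ($\neg$, $\tilde{\omega}$, $\omega_1$, transpose of partitions) must all line up, and I would make sure to pin down this correspondence carefully before writing equations.
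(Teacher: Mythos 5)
Your argument is correct and is precisely the intended chain of reasoning: the paper's own proof consists only of the one-line observation that the result follows by combining Lemma~\ref{l:omega}, Lemma~\ref{l:neg-flag}, and Proposition~\ref{p:gvex}, and your write-up fills in exactly those details (applying $\tilde{\omega}$ to the $G_\mu$-expansion, matching coefficients, specializing $\bfx\mapsto\bfy$, and using that $\omega_1$ is an involution). The bookkeeping you flag — commutation of $\omega_1$ with the substitution and the involutivity coming from $\ominus(\ominus b)=b$ — checks out.
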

\section{Main Result}
\label{s:main}

\subsection{Edelman--Green coefficients via tableaux}

We start with a way to decompose a semistandard
$\mathcal{S}$--tableau into two tableaux.
\begin{defi}
Let $\lambda$ be a partition and take 
$T \in \SetSSYT(\lambda)$.
The non-positive values in cells of $T$ form
a tableau $T_-$ of shape $\nu \subseteq \lambda$ in $ \SetSSYT_-(\nu)$.
The positive values in cells of $T$ form a tableau $T_+$ of shape $\lambda/\mu$ in $\SSYT(\lambda/\mu)_+$.
\end{defi}
For example, the tableau
\[
	 T =
\begin{ytableau}
	\scriptstyle \overline{2},\overline{1} & \scriptstyle \overline{1}, 1 & \scriptstyle 1,3\\
	\scriptstyle 0, 1 & \scriptstyle 2 
\end{ytableau}
\quad \mbox{splits into} \quad T_- = \begin{ytableau}
\scriptstyle \overline{2}, \overline{1}	& \scriptstyle\overline{1} \\
\scriptstyle 0 
 \end{ytableau}
\quad \mbox{and} \quad
T_+ = \begin{ytableau}
 	\none & \scriptstyle 1 & \scriptstyle 1,3\\
 	\scriptstyle1 & \scriptstyle2
 \end{ytableau}\;.
\] 
We now describe the image of $\SetSSYT^\phi(\lambda)$
under this decomposition.
Say a skew shape $\lambda/\mu$ is \emph{disconnected} if it contains no adjacent cells.
For $\phi = (\phi_1,\dots,\phi_\ell)$ a flag, define $\phi_{-}$ by $(\phi_{-})_i = \min \{\phi_i, 0\}$. Note that $\phi_{-}$ is weakly increasing and has only non-positive numbers.

\begin{lem}
\label{l:K-decompose}
Let $\lambda$ be a partition and $\phi$ a flag.
For $T$ a set-valued tableau, the map $ T \mapsto (T_-,T_+)$ defined on
\begin{equation}
\label{eq:k-decomposition}
\SetSSYT^\phi(\lambda) \to \bigsqcup_{\nu \subseteq \lambda} \SetSSYT^{\phi^-}(\nu) \times \left( \bigsqcup_{\mu \subseteq \nu:\ \nu/\mu \ \mathrm{disconnected}} \SetSSYT^{\phi^+}(\lambda/\mu)\right),
\end{equation}
is a bijection.
\end{lem}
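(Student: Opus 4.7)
The plan is to construct the inverse map explicitly by cellwise union and to reduce both directions of the bijection to a single structural observation about semistandard set--valued tableaux. First I would fix notation: for $T \in \SetSSYT^\phi(\lambda)$, take $\nu$ to be the set of cells containing at least one non--positive entry and $\mu$ the set of cells containing only non--positive entries, so $T_-$ sits on $\nu$ and $T_+$ sits on $\lambda/\mu$. The single observation driving everything is that $\max T(r,c) \leq \min T(r,c+1)$ together with $\max T(r,c) < \min T(r+1,c)$ and the sign cut at $0$ force: a positive entry in a cell makes every entry below or to the right positive, and a non--positive entry makes every entry above or to the left non--positive.

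From this observation I would deduce, in order: that $\mu \subseteq \nu \subseteq \lambda$ with $\mu$ and $\nu$ both partition shapes; that $\nu/\mu$ is disconnected, because two adjacent mixed cells would simultaneously demand $\max \geq 1$ in one of them and $\min \leq 0$ in its neighbor; and that $T_- \in \SetSSYT^{\phi^-}(\nu)$ with $\phi^-_i = \min\{\phi_i, 0\}$ being the sharp bound (non--positive entries automatically satisfy $\leq 0$), while $T_+$ lives on a skew shape whose nonempty rows have $\phi_i \geq 1$, so the induced flag $\phi^+$ is just $\phi$ where it matters. For the inverse, given a valid pair $(T_-, T_+)$ in the codomain, I would define $T$ by taking the cellwise union and verify semistandardness of $T$ by case analysis on which of the three regions $\mu$, $\nu/\mu$, $\lambda/\nu$ each of two adjacent cells lies in. Monochromatic pairs reduce to semistandardness of $T_-$ or $T_+$; the configurations that would force $\nu$ or $\mu$ to not be a partition are excluded on shape grounds (for example, $(r,c) \in \lambda/\nu$ adjacent on the right to $(r,c+1) \in \nu$ is impossible); and the only remaining bad configuration, two mixed cells adjacent in $\nu/\mu$, is exactly what the disconnectedness hypothesis rules out.

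The main obstacle I anticipate is bookkeeping rather than insight: in the inverse direction there are roughly half a dozen horizontal adjacency cases and the same number vertically, and for each one must track whether the relevant minimum or maximum comes from $T_-$, from $T_+$, or from a cell containing only one of the two. Once the three--region decomposition $\lambda = \mu \sqcup (\nu/\mu) \sqcup (\lambda/\nu)$ is fixed and the source of each comparison is tabulated, every case reduces to semistandardness of $T_-$ or $T_+$ within its natural region. The argument is weight--free and $\beta$--free, giving a purely combinatorial decomposition.
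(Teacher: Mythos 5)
Your proposal is correct and follows essentially the same route as the paper: define $\mu$ and $\nu$ from the cells containing only non-positive entries and at least one non-positive entry respectively, derive disconnectedness of $\nu/\mu$ from the clash between $\max T(r,c)\geq 1$ and $\min$ of a mixed neighbor, and invert by cellwise union with a region-by-region semistandardness check. The paper simply asserts the inverse check is easy where you forecast the bookkeeping; otherwise the arguments coincide.
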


\begin{proof}
    Take $T \in \SetSSYT^\phi(\lambda)$.  Say $T_-$ and $T_+$
    have shapes $\nu$ and $\lambda/\mu$
    respectively. 
    We know $\mu$ (resp. $\nu$) consists of all $(r,c) \in \lambda$ so that $T(r,c) \subseteq \ZZ_-$ 
    (resp. $T(r,c) \cap \ZZ_- \neq \empty$).
    Thus, $\mu \subseteq \nu$.

    We clearly have
    $T_- \in \SetSSYT^{\phi^-}(\nu)$
    and $T_+ \in \SetSSYT^{\phi^+}(\lambda/\mu)$.
    To show $\nu/\mu$ is disconnected,
    we consider $(r,c) \in \nu/\mu$.
    Then $T(r,c)$ has both
    positive and non-positive entries. 
    Thus, should they exist,
    $T(r-1,c)$ and $T(r,c-1)$ (resp. $T(r+1,c)$ and $T(r,c+1)$)
    cannot have positive (resp. non-negative) entries,
    so they cannot live in $\nu/\mu$.

    To see our map is a bijection, take $T' \in \SetSSYT^{\phi^-}(\nu)$ and $T'' \in \SetSSYT^{\phi^+}(\lambda/\mu)$.
    Their cell-wise union $T = T' \cup T''$ is a tableau of shape $\lambda$. It is easy to check that $T\in \SetSSYT^\phi(\lambda)$, 
    $T_+ = T'$, $T_- = T''$.
    Both directions are clearly injective, completing our proof.
\end{proof}

We define the flag $\phi^+$
as $\phi^+_i = \max(\phi_i, 0)$.
Clearly, $\SetSSYT^{\phi^+}(\lambda/\mu) = \SetSSYT^{\phi^+}_+(\lambda/\mu)$.
Next we translate~\eqref{eq:k-decomposition}
from the language of tableaux to the language
of polynomials:
\begin{cor}
\label{c:flagged-G-tableaux}
Suppose the partition $\lambda$ and flag $\phi$ are compatible.
Then
\[
G^{\phi}_\lambda(\beta;\bfx; \bfy) = \sum_{\nu \subseteq \lambda}
G^{\phi^-}_\nu(\beta;\bfx; \bfy) 
\sum_{\mu \subseteq \nu:\ \nu/\mu\ \mathrm{disconnected}} \beta^{|\nu| - |\mu|}G^{\phi}_{\lambda/\mu}(\beta;\bfx_+; \bfy).
\]
\end{cor}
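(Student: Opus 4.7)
The plan is to apply the bijection from Lemma~\ref{l:K-decompose} directly to the tableau definition of $G^{\phi}_\lambda(\beta;\bfx;\bfy)$ in~\eqref{eq:double-Schur-tab} and then verify that the weight of $T$ factors (up to a power of $\beta$) as the product of the weights of $T_-$ and $T_+$.

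First I would write
\[
G^{\phi}_\lambda(\beta;\bfx;\bfy) = \sum_{T \in \SetSSYT^\phi(\lambda)} \wt(T),
\]
and use Lemma~\ref{l:K-decompose} to re-index the sum over pairs $(T_-,T_+)$, where $T_-$ ranges over $\SetSSYT^{\phi^-}(\nu)$ for some $\nu \subseteq \lambda$ and $T_+$ ranges over $\SetSSYT^{\phi^+}(\lambda/\mu) = \SetSSYT^{\phi}_+(\lambda/\mu)$ for some $\mu \subseteq \nu$ with $\nu/\mu$ disconnected.

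Next I would perform the key weight computation. Since each cell weight in~\eqref{eq:tab-wt} depends only on the entries at that cell, the product $\prod_{(r,c) \in \lambda} \prod_{i \in T(r,c)} \beta(x_i \ominus y_{i+c-r})$ splits according to the sign of $i$: the factors with $i \leq 0$ correspond exactly to the cells of $\nu$ with their $T_-$-entries, and the factors with $i > 0$ correspond to the cells of $\lambda/\mu$ with their $T_+$-entries. Taking into account the $\beta^{-|\lambda/\mu|}$ normalization in~\eqref{eq:tab-wt}, this yields
\[
\beta^{|\lambda|}\wt(T) = \bigl(\beta^{|\nu|}\wt(T_-)\bigr)\bigl(\beta^{|\lambda/\mu|}\wt(T_+)\bigr),
\]
so $\wt(T) = \beta^{|\nu|-|\mu|}\wt(T_-)\wt(T_+)$ using $|\lambda/\mu|=|\lambda|-|\mu|$.

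Finally, I would substitute this factorization into the re-indexed sum and recognize the inner sums as $G^{\phi^-}_\nu(\beta;\bfx;\bfy)$ and $G^{\phi}_{\lambda/\mu}(\beta;\bfx_+;\bfy)$ respectively; note that restricting $\bfx$ to $\bfx_+$ in the latter is consistent because $T_+$ has only positive entries, and writing the flag as $\phi$ rather than $\phi^+$ is harmless for the same reason. The only step that requires real care is the bookkeeping of the $\beta$-powers, where the exponent $|\nu|-|\mu|$ arises precisely because the cells of $\nu/\mu$ are counted in both $T_-$ and $T_+$; everything else is a direct translation of Lemma~\ref{l:K-decompose}.
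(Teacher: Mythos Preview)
Your proposal is correct and follows essentially the same route as the paper: both apply the bijection of Lemma~\ref{l:K-decompose} and read off the identity from the tableau definitions. Your version is in fact more explicit than the paper's, which simply asserts that the right hand side collects the contributions $\sum_{T_1,T_2}\wt(T_1)\wt(T_2)$ without spelling out the $\beta^{|\nu|-|\mu|}$ bookkeeping or the harmless passage from $\phi^+$ back to $\phi$ in the skew factor; your treatment of both points is accurate.
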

\begin{proof}
The left hand side is $\sum_T \wt(T)$
where $T \in \SetSSYT^\phi(\lambda)$.
On the right hand side, each valid choice of $\nu, \mu$
contributes $\sum_{T_1, T_2}\wt(T_1) \wt(T_2)$
where $T_1 \in \SetSSYT^{\phi^-}(\nu)$
and $T_2 \in \SetSSYT^{\phi^+}(\lambda/\mu)$.
Then this equation is 
implied by~\eqref{eq:k-decomposition}.
\end{proof}

We may restrict this corollary to 
$\phi$ with only positive entries,
obtaining a formula for $a^{\lambda, \phi}_\nu$.

\begin{cor}
\label{C: Decompose positive flag}
Suppose the partition $\lambda$ 
and the flag $\phi$
are compatible. 
Further assume $\phi$ has only non-negative entries. 
Then 
\[
a^{\lambda, \phi}_\nu(\beta; \bfx; \bfy)
= \sum_{\mu \subseteq \nu:\ \nu/\mu \ \mathrm{disconnected}} 
\beta^{|\nu| - |\mu|}G^\phi_{\lambda/\mu}(\beta; \bfx_+; \bfy).
\]
Consequently, $a^{\lambda, \phi}_\nu(\beta; \bfx; \bfy)$
is $0$ if $\lambda/\nu$ has a cell in row $i$
where $\phi_i = 0$.
\end{cor}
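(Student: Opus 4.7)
The plan is to derive this corollary as a direct specialization of Corollary~\ref{c:flagged-G-tableaux}, using the hypothesis that $\phi$ has only non-negative entries to collapse $\phi^-$ and then extract the coefficient of $G_\nu(\beta;\bfx_-;\bfy)$.

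First I would observe that when $\phi$ has only non-negative entries, the definition $\phi^-_i = \min\{\phi_i, 0\}$ gives $\phi^- = (0, \ldots, 0)$. A flag identically zero forces every entry in every row to be a subset of $\ZZ_{\leq 0}$, so $\SetSSYT^{\phi^-}(\nu) = \SetSSYT_-(\nu)$ and hence
\[
G^{\phi^-}_\nu(\beta;\bfx;\bfy) = G_\nu(\beta;\bfx_-;\bfy).
\]
Also, $\phi^+ = \phi$ in this case, so the inner sum in Corollary~\ref{c:flagged-G-tableaux} is exactly the one appearing in the statement.

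Next I would rewrite Corollary~\ref{c:flagged-G-tableaux} in this setting as
\[
G^{\phi}_\lambda(\beta;\bfx;\bfy) = \sum_{\nu \subseteq \lambda} \left(\sum_{\mu \subseteq \nu:\ \nu/\mu\ \mathrm{disconnected}} \beta^{|\nu|-|\mu|}\, G^{\phi}_{\lambda/\mu}(\beta;\bfx_+;\bfy)\right) G_\nu(\beta;\bfx_-;\bfy).
\]
By Proposition~\ref{p:backstable-grothendieck-ring}, the double stable $\beta$--Grothendieck functions $\{G_\nu(\beta;\bfx_-;\bfy)\}_\nu$ form a basis of $\Gamma(\beta;\bfx_-;\bfy)$ over $R(\beta;\bfx;\bfy)$, so coefficients in such expansions are unique. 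Comparing with the defining expansion $G^\phi_\lambda(\beta;\bfx;\bfy) = \sum_\nu a^{\lambda,\phi}_\nu(\beta;\bfx;\bfy) G_\nu(\beta;\bfx_-;\bfy)$ yields the first assertion.

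For the consequence, suppose $\lambda/\nu$ contains a cell $(i,c)$ with $\phi_i = 0$. For every $\mu \subseteq \nu$, this cell also lies in $\lambda/\mu$. But the flag $\phi$ now forces the entry in row $i$ to be a subset of $\ZZ_+$ with all elements $\leq 0$, which is impossible, so $\SetSSYT^\phi(\lambda/\mu) = \varnothing$ and $G^\phi_{\lambda/\mu}(\beta;\bfx_+;\bfy) = 0$. Every term of the sum for $a^{\lambda,\phi}_\nu$ thus vanishes. The main obstacle is minor: checking that the compatibility of $\phi$ with $\lambda$ does not fail for $\lambda/\mu$ (which is automatic since we inherit the same row-length constraints) and confirming the basis property of the $G_\nu$ is available from Proposition~\ref{p:backstable-grothendieck-ring}; both are essentially bookkeeping.
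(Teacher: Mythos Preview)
Your proposal is correct and follows essentially the same approach as the paper: you specialize Corollary~\ref{c:flagged-G-tableaux} using $\phi^- = (0,\ldots,0)$ so that $G^{\phi^-}_\nu = G_\nu(\beta;\bfx_-;\bfy)$, then read off the coefficient, and for the vanishing claim you argue that a cell in row $i$ with $\phi_i=0$ persists in every $\lambda/\mu$ and kills $G^\phi_{\lambda/\mu}(\beta;\bfx_+;\bfy)$. The paper's proof is the same, only more terse; your explicit appeal to the basis property via Proposition~\ref{p:backstable-grothendieck-ring} is a reasonable clarification of what the paper leaves implicit.
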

\begin{proof}
Since $\phi$ has only positive entries,
$\phi^-$ has only $0$s.
Therefore, in Corollary~\ref{c:flagged-G-tableaux}
the terms $G^{\phi^-}_\nu(\beta;\bfx; \bfy)$ 
become $G_\nu(\beta;\bfx; \bfy)$.
The equation in this corollary follows 
from the definition of $a^{\lambda, \phi}_\nu(\beta; \bfx; \bfy)$.

Now suppose $\lambda/\nu$ has a cell in row $i$
where $\phi_i = 0$.
Then $\lambda/\mu$ also has a cell in row $i$
for any $\mu \subseteq \nu$.
Thus $G^\phi_{\lambda/\mu}(\beta; \bfx_+; \bfy)$
vanishes,
so $a^{\lambda, \phi}_\nu(\beta; \bfx; \bfy) = 0$.
\end{proof}

Finally, for general compatible $\lambda$
and $\phi$,
we obtain the following tableau formula
involving a summation.
We define the flag $\phi^+$
as $\phi^+_i = \max(\phi_i, 0)$.

\begin{prop}
\label{P: decompose 2}
Suppose the partition $\lambda$ 
and the flag $\phi$
are compatible. 
Then
\begin{equation}
\label{eq: general G}
G^{\phi}_{\lambda}(\beta;\bfx; \bfy) 
= \sum_{\nu \subseteq \lambda}
G^{\phi^-}_{\nu}(\beta;\bfx; \bfy) a^{\lambda, \phi^+}_\nu(\beta;\bfx; \bfy).
\end{equation}
Consequently, 
\begin{equation}
\label{eq: general a}
a^{\lambda, \phi}_\rho(\beta; \bfx; \bfy)
= \sum_{\nu \subseteq \lambda}
a^{\nu, \phi^-}_{\rho}(\beta;\bfx; \bfy)
a^{\lambda, \phi^+}_\nu(\beta;\bfx; \bfy). 
\end{equation}
By setting $\bfx$ into $\bfy$,
we obtain:
\begin{equation}
\label{eq: general j}
j^{\lambda, \phi}_\rho(\beta; \bfy)
= \sum_{\nu \subseteq \lambda}
j^{\nu, \phi^-}_{\rho}(\beta; \bfy)
j^{\lambda, \phi^+}_\nu(\beta; \bfy). 
\end{equation}
\end{prop}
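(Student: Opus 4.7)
The plan is to deduce (\ref{eq: general G}) by composing Corollaries~\ref{c:flagged-G-tableaux} and~\ref{C: Decompose positive flag}, then extract (\ref{eq: general a}) by matching $G_\rho$-coefficients and (\ref{eq: general j}) by specializing $\bfx \mapsto \bfy$.

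First I would apply Corollary~\ref{c:flagged-G-tableaux} to write
\[
G^{\phi}_{\lambda}(\beta;\bfx;\bfy)
= \sum_{\nu \subseteq \lambda} G^{\phi^-}_{\nu}(\beta;\bfx;\bfy) \sum_{\mu \subseteq \nu:\ \nu/\mu\ \mathrm{disc.}} \beta^{|\nu|-|\mu|}\, G^{\phi}_{\lambda/\mu}(\beta;\bfx_+;\bfy).
\]
Every tableau counted by $G^{\phi}_{\lambda/\mu}(\beta;\bfx_+;\bfy)$ has entries in $\mathcal{S}_+$, so the row bound $\max T(r,c) \leq \phi_r$ is equivalent to $\max T(r,c) \leq \phi^+_r$ and therefore $G^{\phi}_{\lambda/\mu}(\beta;\bfx_+;\bfy) = G^{\phi^+}_{\lambda/\mu}(\beta;\bfx_+;\bfy)$. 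A short case analysis on the sign of $\phi_i$ and $\phi_{i+1}$ shows $\phi^+_{i+1}-\phi^+_i \leq \phi_{i+1}-\phi_i$, so compatibility of $(\lambda,\phi)$ transfers to compatibility of $(\lambda,\phi^+)$. Applying Corollary~\ref{C: Decompose positive flag} to $(\lambda,\phi^+)$ then collapses the inner sum to $a^{\lambda,\phi^+}_\nu(\beta;\bfx;\bfy)$, yielding (\ref{eq: general G}).

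For (\ref{eq: general a}), I would expand each $G^{\phi^-}_\nu(\beta;\bfx;\bfy)$ on the right of (\ref{eq: general G}) as $\sum_\rho a^{\nu,\phi^-}_\rho(\beta;\bfx;\bfy)\, G_\rho(\beta;\bfx_-;\bfy)$, collect coefficients of each $G_\rho(\beta;\bfx_-;\bfy)$, and compare with the expansion $G^{\phi}_{\lambda}(\beta;\bfx;\bfy) = \sum_\rho a^{\lambda,\phi}_\rho(\beta;\bfx;\bfy)\, G_\rho(\beta;\bfx_-;\bfy)$. Linear independence of $\{G_\rho(\beta;\bfx_-;\bfy)\}$, which forms the defining basis of $\Gamma(\beta;\bfx_-;\bfy)$ by Proposition~\ref{p:backstable-grothendieck-ring}, then gives (\ref{eq: general a}). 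Finally, substituting $\bfx \mapsto \bfy$ in (\ref{eq: general a}) and recalling $j^{\tilde\lambda,\tilde\phi}_\rho(\beta;\bfy) = a^{\tilde\lambda,\tilde\phi}_\rho(\beta;\bfy;\bfy)$ yields (\ref{eq: general j}).

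The main obstacle, modest as it is, is bookkeeping: verifying the compatibility of $(\lambda,\phi^+)$ treated above, noting $G^{\phi}_{\lambda/\mu}=G^{\phi^+}_{\lambda/\mu}$ when restricted to $\bfx_+$-tableaux, and interpreting $a^{\nu,\phi^-}_\rho(\beta;\bfx;\bfy)$ when $(\nu,\phi^-)$ is not compatible in the sense of~\eqref{eq:compatible}. The last point is resolved by observing that the tableau entries counted by $G^{\phi^-}_\nu(\beta;\bfx;\bfy)$ are forced into $\bfx_-$, so $G^{\phi^-}_\nu(\beta;\bfx;\bfy) \in R(\beta;\bfy) \otimes \Gamma(\beta;\bfx_-;\bfy)$ and admits a unique expansion in the $G_\rho$-basis, which we take to define $a^{\nu,\phi^-}_\rho$. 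Once these checks are recorded the proposition is purely formal.
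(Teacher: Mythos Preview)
Your argument follows the paper's route: apply Corollary~\ref{c:flagged-G-tableaux}, replace $\phi$ by $\phi^+$ in the skew factor, invoke Corollary~\ref{C: Decompose positive flag}, then expand in the $G_\rho$--basis and specialize. The one substantive divergence is how you handle the meaning of $a^{\nu,\phi^-}_\rho$. The paper checks directly that $(\nu,\phi^-)$ is compatible for every $\nu$ with $a^{\lambda,\phi^+}_\nu\neq 0$ (the only $\nu$ that contribute, by the last sentence of Corollary~\ref{C: Decompose positive flag}), via a short case analysis on the signs of $\phi_i$ and $\phi_{i+1}$; this identifies $G^{\phi^-}_\nu$ with a backstable Grothendieck polynomial via Propositions~\ref{p:gvex} and~\ref{p:backstable-grothendieck-ring}, so the coefficients $a^{\nu,\phi^-}_\rho$ are the ones already defined in Section~\ref{ss:backstable-Schubert}.

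Your shortcut---``entries are forced into $\bfx_-$, so $G^{\phi^-}_\nu\in R(\beta;\bfy)\otimes\Gamma(\beta;\bfx_-;\bfy)$''---is not quite a proof: being a power series in $\bfx_-$ and $\bfy$ does not by itself guarantee membership in that tensor product, since a flagged $G^{\phi^-}_\nu$ is generally not symmetric in all of $\bfx_-$ and $R(\beta;\bfy)$ contains no $\bfx_-$ variables to absorb the asymmetry. The paper's compatibility verification closes this gap cleanly, and it is in any case required downstream in Section~\ref{s:tableaux}, where Corollary~\ref{L: negation flag} is applied to the pair $(\nu,\phi^-)$.
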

\begin{proof}
We first observe in Corollary~\ref{c:flagged-G-tableaux}
that we can replace $G^\phi_{\lambda/\mu}(\beta; \bfx_+; \bfy)$ with $G^{\phi^+}_{\lambda/\mu}(\beta; \bfx_+; \bfy)$:
Note $\phi$ and $\phi^+$
differ on entry $i$ only when $\phi_i < 0$.
If $\lambda/\mu$ has a cell in such row $i$,
the two polynomials both vanish. 
Otherwise, the two polynomials clearly agree. 

Next, since $\phi^+$ has only non-negative
entries, 
Corollary~\ref{C: Decompose positive flag} and Corollary~\ref{c:flagged-G-tableaux} give~\eqref{eq: general G}.

For~\eqref{eq: general a}, we first need to show
$\nu$ and $\phi^-$ are compatible to ensure the notation $a^{\nu, \phi^-}_\mu(\beta;\bfx; \bfy)$ makes sense. 
We check $\phi^-_{i+1} - \phi^-_i \leq \nu_{i} - \nu_{i+1} + 1$
by considering three cases.
\begin{itemize}
\item If $\phi_{i+1}, \phi_i \leq 0$,
then $\phi^-_{i+1} - \phi^-_{i} = \phi_{i+1} - \phi_i$.
By the condition of $\nu$,
$\nu_{i} - \nu_{i+1} = \lambda_{i} - \lambda_{i+1}$. 
Then the equation to check follows from
the compatibility of $\lambda$ and $\phi$.
\item If $\phi_i \leq 0 < \phi_{i+1}$,
then $\phi^-_{i} = \phi_i$ and $\phi^-_{i+1}  = 0 < \phi_{i+1}$.
By the condition on $\nu$,
we know $\nu_{i} = \lambda_i$ and $\nu_{i+1} < \lambda_{i+1}$.
We have 
$$\phi^-_{i+1} - \phi^-_i 
< \phi_{i+1} - \phi_i
\leq \lambda_{i} - \lambda_{i+1} + 1
< \nu_{i} - \nu_{i+1} + 1.$$
\item If $0 < \phi_i, \phi_{i+1}$,
we have $\phi^-_{i+1} - \phi^-_i = 0 - 0= 0$.
Our inequality is trivial. 
\end{itemize} 

Then~\eqref{eq: general a} follows by expanding each
$G^{\phi^-}_\nu(\beta;\bfx; \bfy)$ from~\eqref{eq: general G} into $G_{\rho}(\beta;\bfx_-; \bfy)$'s.
Finally, we obtain~\eqref{eq: general j}
by setting $\bfx$ to $\bfy$ 
in~\eqref{eq: general a}.
\end{proof}

Recall our main objective is to give a tableau formula for
$j^{\lambda, \phi}_\rho(\beta; \bfy)$ when $\lambda$ and $\phi$
are compatible.
By~\eqref{eq: general j},
this reduces to identifying formulas for 
$j^{\lambda, \phi^-}_\nu(\beta; \bfy)$
and $j^{\lambda, \phi^+}_\nu(\beta; \bfy)$. 
For the latter,
by Corollary~\ref{C: Decompose positive flag},
\begin{equation}
\label{eq:j-pos}
j^{\lambda, \phi^+}_\nu(\beta; \bfy)
= \sum_{\mu \subseteq \nu:\ \nu/\mu \ \mathrm{disconnected}} \beta^{|\nu| - |\mu|} G^{\phi^+}_{\lambda/\mu}(\beta; \bfx_+; \bfy) \mid_{\bfx \mapsto \bfy}.    
\end{equation}
Therefore,
we take a detour in the next
section and study 
$G^{\phi^+}_{\lambda/\mu}(\beta; \bfx_+; \bfy) \mid_{\bfx \mapsto \bfy}$.

\subsection{Understanding $G^\phi_{\lambda/\mu}(\beta; \bfx_+; \bfy) \mid_{\bfx \mapsto \bfy}$ when $\phi$ is non-negative}
\label{ss:pos-supp}
We fix $\lambda, \phi$ compatible and require
$\phi$ non-negative.
Also, fix $\mu \subseteq \lambda$.
Clearly, $G^\phi_{\lambda/\mu}(\beta; \bfx_+; \bfy) = 0$
if $\lambda/\mu$ has a cell in row $i$ and $\phi_i = 0$,
so we assume this is not the case. 
If $\lambda/\mu$ contains a cell on the diagonal, any tableau $T$ of shape $\lambda/\mu$ has a cell weight containing a term of the form $(x_i \ominus y_i)$ for some $i$.
Consequently, $G^{\phi}_{\lambda/\mu}(\beta;\bfx_+, \bfy)\mid_{\bfx \mapsto \bfy}$
must vanish. 
Therefore, we may assume $\lambda/\mu$
has no cells on the diagonal. 
Such skew shapes $\lambda/\mu$ can be separated into
two parts by the diagonal.
There exist partitions $\mu^U$ and $\mu^D$ satisfying:
\begin{itemize}
\item Cells of $\lambda/\mu^U$
and cells of $\lambda/\mu^D$ 
form a partition of cells in  $\lambda/\mu$.
\item Cells in $\lambda/\mu^U$
lies above the diagonal while cells in $\lambda/\mu^D$
lies below the diagonal.
\end{itemize}
Consequently, 
$G_{\lambda/\mu}^{\phi}(\beta;\bfx_+, \bfy)
\: = \: G_{\lambda/\mu^U}^{\phi}(\beta;\bfx_+, \bfy) \:
\cdot \: G_{\lambda/\mu^D}^{\phi}(\beta;\bfx_+, \bfy)$.

For instance, let $\lambda = (2,1,1)$, $\phi = (2,2,3)$
and $\mu = (1)$.
Then $\mu^U = (1,1,1)$ and $\mu^D = (2)$.
We have 
\[
G_{(2,1,1)/(1)}^{\phi}(\beta; \bfx_+, \bfy)
= G_{(2,1,1)/(1,1,1)}^{\phi}(\beta;\bfx_+, \bfy)G_{(2,1,1)/(2)}^{\phi}(\beta;\bfx_+, \bfy).
\]

We now must study the polynomials $G^\phi_{\lambda/\mu^U}(\beta;\bfx_+;\bfy)\mid_{\bfx \mapsto \bfy}$
and $G^\phi_{\lambda/\mu^D}(\beta;\bfx_+;\bfy)\mid_{\bfx \mapsto \bfy}$.
The former is relatively straightforward.

\begin{lem}
\label{l:upper diagonal pos}
We have
\[
G_{\lambda/\mu^U}^{\phi}(\beta;\bfx_+;\bfy)\mid_{\bfx \mapsto \bfy} = \sum_{T \in \SetSSYT^{\phi^+}(\lambda/\mu^U)} \wt(T)\mid_{\bfx \mapsto \bfy}.
\]
Moreover, each term $\wt(T)\mid_{\bfx \mapsto \bfy}$ is a product of distinct Type $1$ terms and a power of $\beta$. 
\end{lem}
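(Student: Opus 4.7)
My plan is to reduce the lemma to two essentially independent steps: a definitional identity and a combinatorial distinctness argument. The first equality should be routine: since $\phi$ is assumed non-negative, $\phi = \phi^+$, and by the observation stated immediately before the lemma one has $\SetSSYT^{\phi^+}(\lambda/\mu^U) = \SetSSYT_+^{\phi^+}(\lambda/\mu^U)$, so both sides of the identity sum over the same tableaux after applying $\bfx \mapsto \bfy$ termwise. All of the real content lives in the structural description of $\wt(T)\mid_{\bfx\mapsto\bfy}$.

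For the structural claim, I would fix $T \in \SetSSYT^{\phi^+}(\lambda/\mu^U)$ and expand using~\eqref{eq:tab-wt}. Every cell $(r,c) \in \lambda/\mu^U$ satisfies $c > r$ because the shape lies strictly above the diagonal, so $c - r \geq 1$. Each entry $i \in T(r,c)$ is a positive integer, hence after $\bfx \mapsto \bfy$ the factor $\beta(x_i \ominus y_{i+c-r})$ becomes $\beta(y_i \ominus y_{i+c-r})$ with $0 < i < i+(c-r)$, which is a Type $1$ term by definition. Combined with the $\beta^{-|\lambda/\mu^U|}$ prefactor, $\wt(T)\mid_{\bfx\mapsto\bfy}$ is a power of $\beta$ times a product of Type $1$ terms.

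The substantive point is then distinctness. A Type $1$ term $\beta(y_p \ominus y_q)$ is determined by the pair $(p,q)$, and an entry $i$ in cell $(r,c)$ contributes $(p,q) = (i,\, i+c-r)$. Within a single cell these pairs are automatically distinct because $T(r,c)$ is a set, so a repetition would require two distinct cells $(r_1,c_1), (r_2,c_2) \in \lambda/\mu^U$ on a common diagonal (same $c-r$) sharing a common entry. I plan to rule this out by showing entries strictly increase along every diagonal of $\lambda/\mu^U$. The single-step claim is: if $(r,c)$ and $(r+1,c+1)$ both lie in $\lambda/\mu^U$, then the corner cell $(r,c+1)$ also lies in $\lambda/\mu^U$. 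Indeed $(r,c+1) \in \lambda$ because $\lambda_r \geq \lambda_{r+1} \geq c+1$, it is above the diagonal since $c+1 > c > r$, and it cannot lie in $\mu$ because $(r,c+1) \in \mu$ would force $(r,c) \in \mu$ via the partition property of $\mu$, contradicting $(r,c) \in \lambda/\mu^U$.

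With this intermediate cell in hand, semistandardness chains as $\max T(r,c) \leq \min T(r,c+1) \leq \max T(r,c+1) < \min T(r+1,c+1)$, giving strict increase between adjacent diagonal cells. Iterating along the diagonal from $(r_1,c_1)$ to $(r_2,c_2)$ (each intermediate diagonal cell being in $\lambda/\mu^U$ by the same partition argument applied to the fact that $(r_1+k, c_1+k) \in \lambda$ and $(r_1+k, c_1+k) \notin \mu$ would contradict $(r_1,c_1) \notin \mu$) yields strict increase between the assumed shared entries, a contradiction. This diagonal-connectedness within the above-diagonal skew shape $\lambda/\mu^U$ is the only genuine technical obstacle; it reduces cleanly to the partition property of $\mu$ together with the fact that $\mu^U$ augments $\mu$ only by cells on or below the diagonal.
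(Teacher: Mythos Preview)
Your proof is correct and follows essentially the same line as the paper's. The paper handles the Type~1 claim identically (cell $(r,c)$ above the diagonal, positive entry $i$, gives $y_i \ominus y_{i+c-r}$ with $0<i<i+c-r$), and for distinctness simply invokes Remark~\ref{r:degree 1 in tableau}, which is stated there without proof; your diagonal-increase argument is a direct proof of that remark specialized to the above-diagonal region, so you have supplied the detail the paper omits.
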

\begin{proof}
Let $T \in \SetSSYT^{\phi}_+(\lambda/\mu)$.
We only need to show $\wt(T)\mid_{\bfx \mapsto \bfy}$
can be written as a product
of distinct $(y_i \ominus y_j)$
with $0 < i < j$ and a power of $\beta$.
A cell $(r,c)$ in $T$ satisfies 
$r < c$ and has cell weight
$x_{T(r,c)} \ominus y_{T(r,c) + c - r}$.
We have $0 < T(r,c) < T(r,c) + c - r$.
Thus, after setting $\bfx \mapsto \bfy$,
it becomes $(y_i \ominus y_j)$
with $0 < i < j$.
Then the proof is finished since 
other cells cannot have the same cell weight
as observed in Remark~\ref{r:degree 1 in tableau}.
\end{proof}

Our analysis of polynomial $G^\phi_{\lambda/\mu^D}(\beta;\bfx_+;\bfy)\mid_{\bfx \mapsto \bfy}$
is more subtle, requiring us to introduce a new flag.
For $\lambda$ and $\phi$ compatible, define $\psi(\lambda,\phi)$ as the sequence with
$\psi(\lambda,\phi)_i = \min(i - \lambda_i, \phi_i)$.
For the remainder of the section, $\lambda$ and $\phi$ will be clear, so abusing notation we write $\psi$.
Note $\psi$ is weakly increasing since $\phi_i \leq \phi_{i+1}$
and $i - \lambda_i < (i+1) - \lambda_{i+1}$.

\begin{lem}
\label{l:lower diagonal}
There exists a permutation $\pi\in S_\infty$ depending on 
$\lambda$ and $\phi$
so that
\begin{equation}
\label{EQ: pi lower diagonal}
G^{\phi}_{\lambda/\mu^D}(\beta;\bfx_+;\bfy)\mid_{\bfx \mapsto \bfy} = G^{\psi}_{\lambda/\mu^D}(\beta;\pi \cdot \bfx_+;\bfy) \mid_{\bfx \mapsto \bfy}.
\end{equation}
Here $\pi$ permutes the subscripts of the $\bfx$-variables. 
\end{lem}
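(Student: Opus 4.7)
The plan is to construct $\pi$ explicitly from the data $(\lambda, \phi)$ and verify the identity by induction on $\sum_r (\phi_r - \psi_r)$. The key algebraic tool is the multiplicative identity $1 + \beta(y_i \ominus y_j) = (1 + \beta y_i)/(1 + \beta y_j)$, which causes products of weights $\beta(y_i \ominus y_j)$ arising from a set-valued SSYT to telescope whenever consecutive indices chain together. This is exactly the structure available in $\SetSSYT^\phi_+(\lambda/\mu^D)$ because the set-valued fillings of a single cell already produce such chained products after $\bfx \mapsto \bfy$.

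First, I would define $\pi$. For each row $r$ containing a cell of $\lambda/\mu^D$, we have $\psi_r \leq \phi_r$ by construction. Let $\pi$ send $\psi_r \mapsto \phi_r$ for each such $r$, extended in an order-preserving way so that the interval of allowed row-$r$ entries under $\psi$ maps to the corresponding interval under $\phi$, and let $\pi$ act as the identity elsewhere. The delicate point is consistency: if $\psi_{r_1} = \psi_{r_2}$ we need $\phi_{r_1} = \phi_{r_2}$. A short case analysis on $\psi_r = \min(r - \lambda_r, \phi_r)$, using the strict monotonicity of $r - \lambda_r$ (since $\lambda$ is a partition), shows that $\psi_r = \psi_{r+1}$ forces $\psi_r = \phi_r = \phi_{r+1} = \psi_{r+1}$, ruling out the incompatible mixed cases; thus $\pi$ is well-defined.

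The base case of the induction is $\phi = \psi$, where $\pi = \mathrm{id}$ and the two sides agree trivially. For the inductive step, pick $r^*$ with $\phi_{r^*} > \psi_{r^*}$ and let $\phi'$ agree with $\phi$ except $\phi'_{r^*} = \phi_{r^*} - 1$ (one verifies $\phi'$ is still compatible with $\lambda$). Tableaux in $\SetSSYT^{\phi}_+(\lambda/\mu^D)$ split into those that use $\phi_{r^*}$ somewhere in row $r^*$ and those that do not; the latter coincide with $\SetSSYT^{\phi'}_+(\lambda/\mu^D)$. Applying the multiplicative telescoping identity along row $r^*$, one checks that adding the tableaux that use $\phi_{r^*}$ to those that do not amounts, after $\bfx \mapsto \bfy$, to swapping the role of $y_{\phi_{r^*} - 1}$ and $y_{\phi_{r^*}}$ in the final telescoped form. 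This corresponds to composing the $\phi'$-permutation $\pi'$ with the transposition $(\phi_{r^*} - 1,\, \phi_{r^*})$, producing the desired $\pi$ associated to $\phi$.

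The main obstacle is this telescoping step when row $r^*$ contains several cells at varying contents $c - r^* < 0$. The below-diagonal orientation guarantees that each weight $\beta(y_i \ominus y_{i+c-r^*})$ has numerator index strictly greater than the denominator index, so the multiplicative identity collapses successive contributions down to ``boundary'' indices $y_{\phi_r}$; nevertheless, the SSYT coupling between cells in the same row (weakly increasing across) and same column (strictly increasing down) requires careful accounting. I would implement this by sweeping row $r^*$ from the rightmost cell leftward and iterating the single-cell identity, verifying at each step that the upper/lower bounds on entries in adjacent cells propagate correctly so the cancellation is global rather than merely local.
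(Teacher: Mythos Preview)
Your proposal has a genuine gap at the inductive step. You assert that passing from flag $\phi'$ (with $\phi'_{r^*}=\phi_{r^*}-1$) to $\phi$ ``amounts, after $\bfx\mapsto\bfy$, to swapping the role of $y_{\phi_{r^*}-1}$ and $y_{\phi_{r^*}}$,'' but this is never justified. The identity $1+\beta(y_i\ominus y_j)=(1+\beta y_i)/(1+\beta y_j)$ does give a product form for the weight of a \emph{single} set-valued cell, but it does not convert the inclusion of the extra tableaux (those containing $\phi_{r^*}$ in row $r^*$) into a transposition of variables on the $\psi$--side. Concretely, the extra tableaux contribute an \emph{additive} correction $G^{\phi}-G^{\phi'}$, and there is no mechanism offered for why this correction equals $G^{\psi}(\pi\cdot\bfy;\bfy)-G^{\psi}(\pi'\cdot\bfy;\bfy)$. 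You flag the difficulty yourself (multiple cells in row $r^*$, and the column coupling with adjacent rows), but the sketch of ``sweeping from right to left'' does not explain how the strictly-increasing column condition interacts with the telescoping, nor why the result is a variable swap rather than some more complicated operator. As written this is a hope, not an argument. A minor additional point: your $\pi$ is described in terms of ``each row $r$ containing a cell of $\lambda/\mu^D$,'' but the lemma requires $\pi$ to depend only on $\lambda$ and $\phi$, independent of $\mu$.

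The paper's route is quite different and avoids any telescoping. It first proves (via a Bender--Knuth involution) that $G^{\chi}_{\lambda/\mu}(\beta;\bfx_+;\bfy)$ is symmetric in any block $x_i,\dots,x_j$ for which none of $i,\dots,j-1$ appear in the flag $\chi$. It then interpolates between $\phi$ and $\psi$ one \emph{row} at a time via the flags $\chi^{(i)}=(\psi_1,\dots,\psi_i,\phi_{i+1},\dots,\phi_\ell)$, simultaneously building permutations $\pi_0,\dots,\pi_\ell$ by sliding the block $1,\dots,\Delta_{i+1}$ (where $\Delta_j=\phi_j-\psi_j$) to positions $\psi_{i+1}+1,\dots,\phi_{i+1}$. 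Each step has two parts: (a) the symmetry lemma shows $G^{\chi^{(i)}}(\pi_i\cdot\bfx_+;\bfy)=G^{\chi^{(i)}}(\pi_{i+1}\cdot\bfx_+;\bfy)$ as honest polynomials, since $\pi_{i+1}\pi_i^{-1}$ permutes only indices not separated by any flag value of $\chi^{(i)}$; and (b) any tableau counted by $\chi^{(i)}$ but not by $\chi^{(i+1)}$ must place some $k$ with $\psi_{i+1}<k\le\phi_{i+1}$ in the rightmost cell $(i{+}1,\lambda_{i+1})$, and after applying $\pi_{i+1}$ this entry has weight $x_m\ominus y_m$ with $m=k-(i{+}1)+\lambda_{i+1}$, which vanishes under $\bfx\mapsto\bfy$. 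Thus the flag can be lowered from $\phi_{i+1}$ to $\psi_{i+1}$ for free after the substitution. No telescoping is needed, and the column constraints are handled automatically by the Bender--Knuth argument.
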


We postpone the proof of Lemma~\ref{l:lower diagonal}. 
Let us first use it
to obtain the analogue of Lemma~\ref{l:upper diagonal pos}.

\begin{lem}
\label{l:lower diagonal2}
We can write
$G^{\phi}_{\lambda/\mu^D}(\beta;\bfx_+;\bfy)\mid_{\bfx \mapsto \bfy}$,
as a sum where each summand 
is a product of distinct Type $3$ terms and a power of $\beta$. 
\end{lem}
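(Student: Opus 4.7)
The plan is to apply Lemma~\ref{l:lower diagonal} to replace $\phi$ with $\psi$ and permute the $\bfx$-variables by $\pi$, then unwind the tableau formula in the spirit of Lemma~\ref{l:upper diagonal pos}.

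First, I would use Lemma~\ref{l:lower diagonal} to write
\[
G^{\phi}_{\lambda/\mu^D}(\beta;\bfx_+;\bfy)\mid_{\bfx \mapsto \bfy} = G^{\psi}_{\lambda/\mu^D}(\beta;\pi \cdot \bfx_+;\bfy)\mid_{\bfx \mapsto \bfy}
\]
for some $\pi \in S_\infty$ depending only on $\lambda$ and $\phi$. Expanding the right-hand side via~\eqref{eq:double-Schur-tab} gives $\sum_{T \in \SetSSYT^\psi_+(\lambda/\mu^D)} \wt(T)\mid_{\bfx \mapsto \bfy}$, in which each element-position $(i,r,c)$ contributes a factor $\beta(y_{\pi(i)} \ominus y_{i+c-r})$.

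Next I would verify each such factor is a Type 3 term. Because $\pi \in S_\infty$ maps $\ZZ_+$ to $\ZZ_+$, we have $\pi(i) > 0$. For a cell $(r,c) \in \lambda/\mu^D$ strictly below the diagonal, we have $c \leq \lambda_r \leq r - 1$, so $\psi_r = \min(r - \lambda_r, \phi_r) \geq 1$ and $i \leq \psi_r \leq r - \lambda_r$. Therefore $i + c - r \leq c - \lambda_r \leq 0$, so $\pi(i) > 0 \geq i + c - r$, giving exactly a Type 3 term.

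Distinctness of the factors within a single summand follows from Remark~\ref{r:degree 1 in tableau}. Two element-positions $(i_1,r_1,c_1), (i_2,r_2,c_2)$ yield the same factor iff $\pi(i_1) = \pi(i_2)$ and $i_1 + c_1 - r_1 = i_2 + c_2 - r_2$; since $\pi$ is a bijection this collapses to $i_1 = i_2$ and $c_1 - r_1 = c_2 - r_2$, which is precisely the condition under which the unsubstituted weights $\beta(x_{i_1} \ominus y_{i_1+c_1-r_1})$ and $\beta(x_{i_2} \ominus y_{i_2+c_2-r_2})$ would coincide — a situation Remark~\ref{r:degree 1 in tableau} rules out in any $T \in \SetSSYT(\lambda/\mu^D)$.

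The main obstacle I anticipate is not internal to this lemma but inherited from Lemma~\ref{l:lower diagonal}: the whole argument rests on $\pi$ lying in $S_\infty$ rather than just $S_\ZZ$, since otherwise $\pi(i)$ could be non-positive and the cell factors could fall into Types 1 or 2 rather than uniformly into Type 3. Assuming that condition as asserted there, the remainder of the proof reduces to the inequality bookkeeping above together with Remark~\ref{r:degree 1 in tableau}.
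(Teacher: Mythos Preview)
Your proposal is correct and follows essentially the same route as the paper: invoke Lemma~\ref{l:lower diagonal}, then for each entry $i \in T(r,c)$ use $i \leq \psi_r \leq r - \lambda_r$ together with $c \leq \lambda_r$ to get $i + c - r \leq 0$, and appeal to Remark~\ref{r:degree 1 in tableau} for distinctness. Your explicit verification that the permutation $\pi$ does not spoil the distinctness argument is a nice touch that the paper leaves implicit.
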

\begin{proof}
By Lemma~\ref{l:lower diagonal},
we must write the right hand side of~\eqref{EQ: pi lower diagonal} as such a sum. 
Let $T \in \SetSSYT_+^{\psi}(\lambda/\mu)$.
We first show $\wt(T)$
is a product of distinct $(x_i \ominus y_j)$
with $j \leq 0 < i$ and a power of $\beta$.
Consider a value $i \in T(r,c)$ with $(r,c) \in \lambda/\mu$.
We know $1 \leq i \leq \psi_r \leq r - \lambda_{r}$.
Thus, 
$$
T(r,c) + c - r \leq (r - \lambda_{r}) + c - r = c - \lambda_r \leq 0.
$$
Then this value contributes the weight $y_{\pi(i)} \ominus y_j$ to
the right hand side of~\eqref{EQ: pi lower diagonal} with $j \leq 0 < \pi(i)$,
which is a type $3$ term.
The proof is finished since 
cells cannot have the same cell weight, as observed in Remark~\ref{r:degree 1 in tableau}.
\end{proof}

We briefly summarize our conclusion in this subsection. 
\begin{rmk}
\label{R: Pos summary}
Consider $\lambda, \phi$ compatible.
Assume entries in $\phi$ are non-negative.
Take $\mu \subseteq \lambda$.
We can characterize $G =
G^{\phi}_{\lambda/\mu}(\beta;\bfx_+;\bfy)\mid_{\bfx \mapsto \bfy}$
as follows, ignoring the $\beta$ for clarity:
\begin{itemize}
\item (Case Zero): If $\lambda/\mu$ has a cell on the diagonal,
or in row $i$ with $\phi_i = 0$,
then $G = 0$.
\item (Case Up): If $\lambda/\mu$ 
is completely above the diagonal, 
$G$ is a sum where each summand is a distinct 
product of Type $1$ terms. 
\item (Case Down): If $\lambda/\mu$ 
is completely under the diagonal, 
$G$ is a sum where each summand is a distinct 
product of Type $3$ terms. 
\item (Case Combined): Otherwise, 
$G$ is a sum with each summand a distinct 
product of Type $1$ and Type $3$ terms.
\end{itemize}
\end{rmk}

In the remainder of this subsection, we will prove Lemma~\ref{l:lower diagonal}.
We rename $\mu^D$ as $\mu$ for simplicity. 
The first ingredient is a result showing $G_{\lambda / \mu}^{\phi}(\beta;\bfx_+;\bfy)$ is symmetric in certain $\bfx$-variables.

\begin{lem}
\label{l:flag-symm}
The polynomial $G_{\lambda/\mu}^{\phi}(\beta;\bfx_+;\bfy)$
is symmetric in $x_i, x_{i+1}, \cdots, x_{j}$
as long as $i, i+1, \cdots, j-1$ do not appear in $\phi$.
\end{lem}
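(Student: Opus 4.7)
My plan is to reduce the statement to adjacent transpositions: it suffices to show $G^{\phi}_{\lambda/\mu}(\beta;\bfx_+;\bfy)$ is symmetric in $x_k, x_{k+1}$ whenever $k \notin \phi$, and iterating over $k = i, \ldots, j-1$ yields the general claim. Fix such a $k$. Since $\phi$ is weakly increasing, each row $r$ of $\lambda/\mu$ either satisfies $\phi_r < k$ (so no cell in that row contains $k$ or $k+1$, and the row contributes only weights in $x_1, \ldots, x_{k-1}$, irrelevant to the desired symmetry) or $\phi_r \geq k+1$ (both values are allowed). In the latter rows, the semistandard conditions force cells containing $k$ or $k+1$ into a rigid configuration: within each row the $\{k\}$-only cells form a contiguous left block, optionally followed by a single $\{k, k+1\}$-cell, then $\{k+1\}$-only cells; within each column any cell containing $k$ lies weakly above any cell containing $k+1$.

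Naive Bender--Knuth fails here because swapping a cell's value from $k$ to $k+1$ also shifts its $y$-index (the weight $\beta(x_i \ominus y_{i+c-r})$ depends on the position). I therefore propose grouping tableaux into equivalence classes sharing all entries outside $\{k, k+1\}$ and the same set of cells containing values in $\{k, k+1\}$, then summing within each class over the allowable $k$-vs-$(k+1)$ distributions. Using the factorization $1 + \beta(x_i \ominus y_j) = (1+\beta x_i)/(1+\beta y_j)$ together with the telescoping divided-difference identity
\[
\sum_{m=1}^n \prod_{i<m}(u-w_i) \prod_{i>m}(v-w_i) \;=\; \frac{\prod_i(u-w_i) - \prod_i(v-w_i)}{u-v},
\]
I expect each intra-class sum to reduce to an expression that is manifestly symmetric in $1+\beta x_k$ and $1+\beta x_{k+1}$, hence in $x_k, x_{k+1}$. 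Two key algebraic observations facilitate this: first, for two row-adjacent cells at $(r, c)$ and $(r, c+1)$ the $y$-indices $k+1+(c-r)$ and $k+(c+1-r)$ coincide, which forces the relevant ``cross terms'' to cancel; second, for two column-adjacent cells at $(r, c)$ and $(r+1, c)$ both lying in the $(k,k+1)$-support, the only valid assignment is $\{k\}$ above $\{k+1\}$, and the $y$-indices $k+(c-r)$ and $k+1+(c-(r+1))$ again coincide, so the resulting pair of weight factors $\beta(x_k \ominus y_{\cdot}) \beta(x_{k+1} \ominus y_{\cdot})$ is already symmetric in $x_k \leftrightarrow x_{k+1}$.

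The main obstacle will be combining these local symmetries into a global one and handling equivalence classes whose $(k, k+1)$-support mixes row- and column-coupled cells, where the set-valued structure and the optional $\{k, k+1\}$-cell require careful bookkeeping. I expect a case analysis on the geometry of the support to factor each intra-class sum as a product of row-wise symmetric factors (telescoping identity) and column-wise symmetric factors (matching $y$-indices), after which the symmetry of the whole follows and the lemma is established.
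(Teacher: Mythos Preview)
Your approach is essentially the paper's: reduce to a single adjacent transposition $s_k$, freeze all entries outside $\{k,k{+}1\}$ together with the vertically stacked $k/(k{+}1)$ pairs (whose contribution $(x_k\ominus y_j)(x_{k+1}\ominus y_j)$ is already symmetric, exactly your column observation), and then argue that each remaining free row segment contributes a factor symmetric in $x_k,x_{k+1}$.  The only real difference is how that last row-segment step is handled.  The paper does not compute: it recognizes the row-segment sum as $G_{(n)}(\beta;x_k,x_{k+1};\sigma\cdot\bfy)$ for a suitable shift $\sigma$ of the $\bfy$-variables, and then cites that the backstable Grothendieck of a Grassmannian permutation is fixed by $\pi_k$ (up to $\beta$), hence by $s_k$ acting on $\bfx$.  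Your telescoping route also works, but note that the identity you wrote is for ordinary subtraction; to apply it you must first pass to the coordinates $1+\beta x_k$, $1+\beta x_{k+1}$, $1+\beta y_\bullet$ via the factorization you mention, after which the one-value-per-cell and the single $\{k,k{+}1\}$-cell contributions combine into a genuine telescoping sum.  The paper's citation is the cleaner shortcut; your version is more self-contained but needs that change of variables to close.
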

%The proof of Lemma~\ref{l:flag-symm} uses a Bender-Knuth involution argument to reduce to the case of a single row, which follows as double Schur functions are symmetric.
\begin{proof}
We use a Bender-Knuth involution argument.
It is enough to prove the lemma 
with $j = i+1$.

Suppose $\lambda/\mu$ is a single row of $n$ cells. 
Then 
\[
G^{\phi}_{\lambda / \mu} (\beta;x_i, x_{i+1};\bfy) = G_{(n)}(\beta;x_i, x_{i+1}; \sigma \cdot \bfy),
\]
where $\sigma$ is some permutation shifting the $\bfy$-variables. 
This polynomial is well known to be symmetric in $x_i$ and $x_{i+1}$.
One argument for this fact is that, by Proposition~\ref{p:Kgrassmannian} $\cev{\fkG}_{\iota^{i+1}(w((n))}(\beta;\bfx;\bfy)$ is fixed by $\pi_i$ (up to a factor of $\beta$), which implies it is fixed by $s_i$ acting on the $\bfx$--variables.

Now consider an arbitrary $\lambda / \mu$.
Given $T \in \SetSSYT_+^\phi (\lambda / \mu)$, freeze all values in $T$ that are not $i$ or $i{+}1$.
Moreover, freeze all pairs $i,i{+}1$ with $i$ directly above $i{+}1$ in $T$ -- necessarily $i$ is the maximum value of its cell while $i{+}1$ is the minimum.
All other entries are free.
The free entries in $T$ will be in consecutive row segments, each with some weakly increasing filling from $\{i,i{+}1\}$.
Hence we obtain a partition of $\text{SSYT}_+^\phi (\lambda / \mu)$ into $A_1 \sqcup A_2 \sqcup \ldots$, where each $A_k$ consists of tableaux having the same frozen entries.
The contribution of a frozen pair of $i$ and $i+1$ to a tableau will be $(x_i \ominus y_j)(x_{i+1} \ominus y_j)$, which is symmetric in $x_i$ and $x_{i+1}$.
Consider each $g_k = \sum_{T \in A_k} \text{wt}(T)$.
The contribution of each non-frozen row segment is $G_{\lambda'/\mu'}(\beta;x_i, x_{i{+}1};\bfy)$ for some $\lambda'/\mu'$ that is a single row.
This is symmetric in $x_i$ and $x_{i+1}$ by the arguments above. 
\end{proof}

Recall $\psi$ is the flag with $\psi_i = \min(i-\lambda_i,\phi_i)$.
Let $\ell = \ell(\lambda)$.
Then let $\Delta$ be the sequence of length $\ell$ with $\Delta_i = \phi_i - \psi_i$.
\begin{lem}
\label{l:delta}
The sequence $\Delta$ is a weakly decreasing sequence of non-negative numbers. 
\end{lem}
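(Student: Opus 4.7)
The plan is to unwind the definition of $\Delta_i$ into a single $\max$ expression and reduce both claims to a calculation using the compatibility inequality~\eqref{eq:compatible}.

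First, observe that $\Delta_i = \phi_i - \min(i-\lambda_i,\phi_i) = \max\bigl(0,\, \phi_i - (i-\lambda_i)\bigr) = \max\bigl(0,\, \phi_i + \lambda_i - i\bigr)$. Non-negativity is then immediate.

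For monotonicity, I would set $f(i) = \phi_i + \lambda_i - i$ and show $f$ is weakly decreasing in $i$. The compatibility of $\lambda$ and $\phi$ gives $\phi_{i+1}-\phi_i \leq \lambda_i - \lambda_{i+1} + 1$, which rearranges to
\[
f(i+1) - f(i) = (\phi_{i+1}-\phi_i) + (\lambda_{i+1} - \lambda_i) - 1 \leq 0.
\]
Hence $f$ is weakly decreasing, and since $x \mapsto \max(0,x)$ is monotone, $\Delta_i = \max(0, f(i))$ is also weakly decreasing.

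There is no real obstacle here; this is purely a reformulation of the compatibility condition. I would present it in two short paragraphs, one for each assertion, totaling a few lines.
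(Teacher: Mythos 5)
Your proof is correct and is essentially the same as the paper's: both rewrite $\Delta_i = \max(0, \phi_i + \lambda_i - i)$, read off non-negativity immediately, and use the compatibility inequality~\eqref{eq:compatible} to show the inner expression is weakly decreasing. The only cosmetic difference is that you spell out the monotonicity of $x \mapsto \max(0,x)$, which the paper leaves implicit.
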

\begin{proof}
First, notice that
\[
\Delta_i = \phi_i - \psi_i
= \phi_i - \min(\phi_i, i - \lambda_i)
= \max(0, \phi_i - i + \lambda_i).
\]
Thus,  $\Delta_i \geq 0$.
To see $\Delta$ is weakly decreasing,  we only need to show 
\[
\phi_i - i + \lambda_i \geq \phi_{i+1} - (i+1) + \lambda_{i+1}.
\]
After rearranging, this becomes:
\[
\lambda_i - \lambda_{i+1} + 1 \geq \phi_{i+1} - \phi_i,
\]
which follows from the compatibility of $\lambda$ and $\phi$.
\end{proof}

Next, we provide an algorithm to compute $\pi$ using $\lambda$ and $\phi$ as applied in Lemma~\ref{l:lower diagonal}.
Our algorithm is iterative, 
producing a sequence of
permutations $\pi_0, \ldots, \pi_\ell$ with $\pi = \pi_\ell$.
While defining these permutations, 
we make sure that
the numbers $1, \ldots, \Delta_i$ appear 
consecutively in increasing order in the one-line notation of $\pi_i$. 

\begin{alg}
Let $\pi_0$ be the identity permutation.
For $i \in [\ell]$:
\begin{itemize}
    \item  if $\psi_i \leq 0$,
set $\pi_{i} := \pi_{i-1}$;
\item if $\psi_i > 0$, construct $\pi_i$ from $\pi_{i-1}$ by shifting 
the values $1, \ldots, \Delta_i$
to the right until they are at 
indices $\psi_i+1, \ldots, \phi_i$.
\end{itemize}
\end{alg}

\begin{ex}
For $\lambda= (4,4,4,4,4,2,1)$ and $\phi = (3,4,4,5,6,6,6)$,
we have $\psi=(-3,-2,-1,0,1,4,6)$ and $\Delta = (6,6,5,5,5,2,2)$; so $\pi_0 = \pi_1 = \ldots = \pi_4 = \text{identity}$ and 
\begin{align*}
\pi_5 & = 6\underline{12345}78 \\
\pi_6 & = 6345\underline{12}78 \\
\pi_7 & = 634578\underline{12}.
\end{align*}
Here, we have underlined the consecutive sequences that have moved.
\end{ex}

Correspondingly, 
we define a sequence of flags.
For $i = 0, \ldots, \ell$,
let 
\[
\chi^{(i)} = (\psi_1,\ldots,\psi_i,\phi_{i+1},\ldots,\phi_{\ell}).
\]
Each $\chi^{(i)}$ is a flag since 
$\phi, \psi$ are both flags and
$\psi_i \leq \psi_{i+1} \leq \phi_{i+1}$.
As demonstrated by the following example, these $\chi^{(i)}$ characterizes the tableaux whose weights do not vanish after applying the modification $\pi_i \cdot \bfx$ and substitution $\bfx \mapsto \bfy$.

We summarize the role of the $\chi^{(i)}$ flags in the following lemma.

\begin{lem}
\label{l:all i the same}
For all $0 \leq i \leq \ell$, each 
$G_{\lambda/\mu}^{\chi^{(i)}}(\beta;\pi_i \cdot \bfx_+; \bfy) \mid_{\bfx \mapsto \bfy}$
is the same.
\end{lem}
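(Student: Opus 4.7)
The plan is to prove, by induction on $i$, that each successive pair satisfies $P_{i-1} = P_i$, where $P_i := G^{\chi^{(i)}}_{\lambda/\mu^D}(\beta; \pi_i \cdot \bfx_+; \bfy)\mid_{\bfx \mapsto \bfy}$. Since $\chi^{(i-1)}$ and $\chi^{(i)}$ agree outside position $i$, where they carry values $\phi_i$ and $\psi_i$ respectively, one may assume $\psi_i < \phi_i$ (otherwise $\pi_i = \pi_{i-1}$ and the flags coincide). Under this assumption $\psi_i = i - \lambda_i$, and I would split on the sign of $\psi_i$.

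If $\psi_i \leq 0$, then $\lambda_i \geq i$, so the cell $(i, \lambda_i) \in \lambda$ lies on or above the diagonal and therefore belongs to $\mu^D$. Since $\mu^D \subseteq \lambda$ is a partition, $\mu^D_i = \lambda_i$, which makes row $i$ of $\lambda/\mu^D$ empty and the flag change at position $i$ vacuous. Combined with $\pi_i = \pi_{i-1}$ (forced by the algorithm when $\psi_i \leq 0$), this gives $P_{i-1} = P_i$ directly.

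The substantive case is $\psi_i > 0$ (so $\lambda_i < i$). Setting $a := \max(\psi_{i-1}, 0)$, I would proceed in two steps. In Step~A I establish the intermediate identity
\[
G^{\chi^{(i-1)}}_{\lambda/\mu^D}(\beta; \pi_{i-1} \cdot \bfx_+; \bfy)\mid_{\bfx \mapsto \bfy}
= G^{\chi^{(i-1)}}_{\lambda/\mu^D}(\beta; \pi_i \cdot \bfx_+; \bfy)\mid_{\bfx \mapsto \bfy}.
\]
By Lemma~\ref{l:flag-symm}, $G^{\chi^{(i-1)}}_{\lambda/\mu^D}$ is symmetric in $x_{a+1}, \ldots, x_{\phi_i}$, because every entry of $\chi^{(i-1)} = (\psi_1, \ldots, \psi_{i-1}, \phi_i, \ldots, \phi_\ell)$ satisfies $\psi_k \leq a$ for $k < i$ or $\phi_k \geq \phi_i$ for $k \geq i$, so none of $a+1, \ldots, \phi_i - 1$ appears. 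The key structural observation about the algorithm is that $\pi_{i-1}$ and $\pi_i$ agree at every position outside $[a+1, \phi_i]$, and, being bijections, they must therefore assign the same set of values to the positions inside $[a+1, \phi_i]$. Hence the substitution $x_k \mapsto y_{\pi(k)}$ replaces $x_{a+1}, \ldots, x_{\phi_i}$ by the same multiset of $y$'s under either permutation, and the established symmetry forces the two specialisations to agree.

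Step~B then handles the flag restriction: a tableau $T$ in $\SetSSYT^{\chi^{(i-1)}}_+(\lambda/\mu^D) \setminus \SetSSYT^{\chi^{(i)}}_+(\lambda/\mu^D)$ contains some value exceeding $\psi_i$ in row $i$, so by semistandardness the rightmost cell $(i, \lambda_i) \in \lambda/\mu^D$ carries some $k' \in (\psi_i, \phi_i]$. The algorithm's invariant places $1, \ldots, \Delta_i$ at positions $\psi_i+1, \ldots, \phi_i$ in $\pi_i$, giving $\pi_i(k') = k' - \psi_i = k' + \lambda_i - i$; hence the factor $y_{\pi_i(k')} \ominus y_{k' + \lambda_i - i}$ vanishes and kills the entire weight $\wt(T)\mid_{\bfx \mapsto \bfy}$. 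Combining Steps~A and~B yields $P_{i-1} = P_i$. I expect the principal obstacle to lie in Step~A: one must identify the exact range $[a+1, \phi_i]$ rearranged by $\pi_i \pi_{i-1}^{-1}$, verify that its interior avoids the flag values of $\chi^{(i-1)}$, and then exploit the multiset-preservation property that makes the symmetry of Lemma~\ref{l:flag-symm} transfer cleanly through the specialisation $\bfx \mapsto \bfy$.
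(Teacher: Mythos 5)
Your proposal is correct and takes essentially the same approach as the paper: Step~A is the paper's Step~1 (using Lemma~\ref{l:flag-symm} together with the observation that $\pi_{i-1}$ and $\pi_i$ differ only by a rearrangement of values inside the interval where $G^{\chi^{(i-1)}}_{\lambda/\mu^D}$ is symmetric), and Step~B is the paper's Step~2 (the vanishing of the cell weight at $(i,\lambda_i)$ under $\pi_i \cdot \bfx$ followed by $\bfx \mapsto \bfy$). The only organizational difference is that you handle $\psi_i \leq 0$ up front by noting row $i$ of $\lambda/\mu^D$ is empty, whereas the paper treats this implicitly (Step~1 is immediate since $\pi_i = \pi_{i+1}$, and Step~2 is vacuous); your version makes that case explicit, which is a slight clarification rather than a different argument.
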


\begin{proof}
Take $0 \leq i < \ell$.
We need to show 
\begin{equation}
\label{eq:lower diagonal}
G_{\lambda/\mu}^{\chi^{(i)}}(\beta;\pi_{i} \cdot \bfx_+; \bfy) \mid_{\bfx \mapsto \bfy}
= G_{\lambda/\mu}^{\chi^{(i+1)}}(\beta;\pi_{i+1} \cdot \bfx_+; \bfy)\mid_{\bfx \mapsto \bfy}.    
\end{equation}
This is separated into two steps.

\ 

\noindent \textbf{Step 1:} We first show 
\[
G_{\lambda/\mu}^{\chi^{(i)}}(\beta;\pi_{i} \cdot \bfx_+; \bfy)
= G_{\lambda/\mu}^{\chi^{(i)}}(\beta;\pi_{i+1} \cdot \bfx_+; \bfy).
\]
This is immediate if $\pi_i = \pi_{i+1}$.
Otherwise, 
$\pi_{i+1}$ is obtained from $\pi_i$
by moving $1, \cdots, \Delta_{i+1}$
to the right. 
In $\pi_i$, the number $1$ 
is at index $\max(\psi_i + 1, 1)$.
In $\pi_{i+1}$,
the number $\Delta_{i+1}$
is at index $\phi_{i+1}$.
As a summary,
$\pi_{i+1}$ is obtained from 
$\pi_i$ by permuting the numbers
whose indices are between
$\max(\psi_i + 1, 1)$ and $\phi_{i+1}$ inclusively. 

On the other hand, by Lemma~\ref{l:flag-symm},
$G_{\lambda/\mu}^{\chi^{(i)}}(\beta;\bfx_+; \bfy)$
is symmetric in variables 
with subscript between
$\max(\psi_i + 1, 1)$ and $\phi_{i+1}$ inclusively, so Step 1 follows. 

\ 

\noindent \textbf{Step 2:} We then show 
$$
G_{\lambda/\mu}^{\chi^{(i)}}(\beta;\pi_{i+1} \cdot \bfx_+; \bfy)\mid_{\bfx \mapsto \bfy}
= G_{\lambda/\mu}^{\chi^{(i+1)}}(\beta;\pi_{i+1} \cdot \bfx_+; \bfy)\mid_{\bfx \mapsto \bfy}.
$$
This is trivial when $\psi_{i+1} = \phi_{i+1}$,
which would imply $\chi^{(i)} = \chi^{(i+1)}$.
Suppose this is not the case, 
then $\psi_{i+1} = i - \lambda_{i+1}$.

Take $T \in \SetSSYT^{\chi^{(i)}}(\lambda/\mu)_+$
but not in 
$\SetSSYT^{\chi^{(i+1)}}(\lambda/\mu)_+$.
It is enough to show $\wt(T)$ becomes $0$
after we permute its $\bfx$-variables by $\pi_{i+1}$
and then set $\bfx \mapsto \bfy$.
Consider the cell 
$(i+1, \lambda_{i+1})$.
By the assumption of $T$,
this is a cell in $T$ containing a value $k$ with 
$\psi_{i+1} < k \leq \phi_i$.
Notice that the permutation $\pi_{i+1}$ sends
$j$ to $j - \psi_{i+1} = j - (i+1) + \lambda_{i+1}$
if $\psi_{i+1} < j \leq \phi_i$.
Thus, after we let $\pi_{i+1}$ permute
the $x$-variables, 
the value $k$ in this cell has weight
$$
x_{k- (i+1) + \lambda_{i+1}}
\ominus y_{k- (i+1) + \lambda_{i+1}},
$$
which would be $0$ after setting $\bfx \mapsto \bfy$.

To conclude, observe that~\eqref{eq:lower diagonal} follows from 
the two steps above. 
\end{proof}

We now prove Lemma~\ref{l:lower diagonal}.
\begin{proof}[Proof of Lemma~\ref{l:lower diagonal}]
Set $\pi = \pi_\ell$.
The statement follows from 
Lemma~\ref{l:all i the same}.
\end{proof}

We can now recover Anderson's positivity result for vexillary permutations.
By Proposition~\ref{P: decompose 2}, the result follows from the $\beta$--Graham positivity of $j^{\lambda,\phi}_\mu(\beta;\bfy)$ when $\phi$ has either all non-negative or all non-positive entries.
The former follows by combining Lemmas~\ref{l:upper diagonal pos} and~\ref{l:lower diagonal}.
The latter follows from the former and Lemma~\ref{l:neg-flag}.
We must now prove our finer positivity with restrictions on types of terms $y_i \ominus y_j$.

\section{Proving Theorem~\ref{t:Kmain}}
\label{s:tableaux}

Fix some compatible $\lambda, \phi$
and $\rho \subseteq \lambda$.
We say a row $i$ is a \emph{positive row}
if $\phi_i > 0$, with
\emph{negative rows}, \emph{zero rows}, 
\emph{non-positive rows} and \emph{non-negative rows} defined similarly. 
For $\nu \subseteq \lambda$, 
define $\xi(\nu)$ as a flag for $\nu'$ 
by $\xi(\nu)_i := -\phi^-_{\nu'_i}$.
Since $\phi^-$ is non-positive,
we know $\xi(\nu)$ is non-negative.
Moreover, Corollary~\ref{L: negation flag}
says $j^{\nu,  \phi^-}_\rho(\beta;\bfy)
= \omega_1(j^{\nu',  \xi(\nu)}_{\rho'}(\beta;\bfy))$.
We deduce a simple lemma regarding $\xi(\nu)$.
\begin{lem}
\label{L: conjugate 0 row}
Say $\nu/\rho$ has a cell on a non-negative row.
Then $\nu'/\rho'$ has a cell on row $j$
with $\xi(\nu)_j = 0$.
\end{lem}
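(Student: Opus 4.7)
The plan is to unpack all the definitions carefully, transpose the offending cell, and then exploit the weak monotonicity of the flag $\phi$. The argument should be essentially definitional, so the main ``obstacle'' is just keeping track of indices under transposition.

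First, I would translate the hypothesis: saying $\nu/\rho$ has a cell on a non-negative row means there exist $i$ and $c$ with $\phi_i \geq 0$, $\rho_i < c$, and $c \leq \nu_i$. In particular $(i,c)$ is a cell of $\nu$ but not of $\rho$. Transposing via the standard equivalence $(a,b)\in\mu$ iff $b\le\mu_a$ iff $a\le\mu'_b$, the cell $(c,i)$ lies in $\nu'$ (from $c\le\nu_i \Leftrightarrow i\le\nu'_c$) and does not lie in $\rho'$ (from $c>\rho_i \Leftrightarrow i>\rho'_c$). So setting $j:=c$, row $j$ of $\nu'/\rho'$ contains the cell $(j,i)$.

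Next I would evaluate $\xi(\nu)_j$. By definition $\xi(\nu)_j = -\phi^-_{\nu'_j}$, and $\phi^-_k = \min(\phi_k,0)$ by construction. Thus $\xi(\nu)_j = 0$ precisely when $\phi_{\nu'_j} \geq 0$. From the previous paragraph $(j,i)\in\nu'$ means $i \leq \nu'_j$, and since $\phi$ is weakly increasing as a flag, we get
\[
\phi_{\nu'_j} \;\geq\; \phi_i \;\geq\; 0,
\]
where the last inequality is the hypothesis on row $i$. Therefore $\phi^-_{\nu'_j}=0$ and $\xi(\nu)_j = 0$, which is exactly what we needed.

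Since every step reduces to a definition or to the monotonicity of $\phi$, I do not anticipate any genuine obstacle; the only thing to be careful about is the direction of the inequality $i\le\nu'_c$ coming from $c\le\nu_i$, which is where the sign of $\xi(\nu)$ and the weak monotonicity of $\phi$ combine in the right way.
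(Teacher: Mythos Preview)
Your proof is correct and follows essentially the same approach as the paper: transpose the offending cell to obtain a cell $(c,i)\in\nu'/\rho'$, then use $i\le\nu'_c$ together with the weak monotonicity of the flag to conclude $\xi(\nu)_c=0$. The only cosmetic difference is that the paper chooses $c=\nu_i$ (the rightmost cell in row $i$) and argues via $\phi^-$ and the non-negativity of $\xi(\nu)$, whereas you take an arbitrary $c$ and work directly with $\phi$; both routes are equivalent.
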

\begin{proof}
Find $i$ such that $(i, \nu_i)$ lies in $\nu/\rho$ and 
$\phi_i \geq 0$.
We deduce $(\nu_i, i)$ lies in $\nu'/\rho'$
and $\phi^-_i = 0$.
It remains to show $\xi(\nu)_{\nu_i} = 0$.
Notice that $\nu'_{\nu_i} \geq i$,
so 
$$
\xi(\nu)_{\nu_i} = -\phi^-_{\nu'_{\nu_i}} \leq -\phi^-_{i} = 0.
$$
We must have $\xi_{\nu_i} = 0$ 
since $\xi(\nu)$ is non-negative. 
\end{proof}

We then describe some situations when 
$j^{\lambda, \phi^+}_\nu(\beta; \bfy)$
or $j^{\nu, \phi^-}_\rho(\beta; \bfy)$
vanish. 

\begin{lem}
\label{L: General vanish}
The polynomial $j^{\lambda,  \phi^+}_\nu(\beta;\bfy)$
vanishes if $\lambda/\nu$ has a cell on the 
diagonal or a non-positive row.
The polynomial $j^{\nu,  \phi^-}_\rho(\beta;\bfy)$
vanishes if $\nu/\rho$ has a cell on the 
diagonal or on a non-negative row.
\end{lem}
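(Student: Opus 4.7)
The plan is to treat the two vanishing claims in turn, using Corollary~\ref{C: Decompose positive flag} for the first claim and then reducing the second to the first via the $\omega_1$ duality of Corollary~\ref{L: negation flag}.

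For the first claim, I would begin by verifying that $\phi^+$ is compatible with $\lambda$ via a brief sign-based case analysis on adjacent entries of $\phi$ (essentially the argument used in the proof of Proposition~\ref{P: decompose 2} for $\phi^-$), so that Corollary~\ref{C: Decompose positive flag} applies and gives
\[
j^{\lambda,\phi^+}_\nu(\beta;\bfy) = \sum_{\mu \subseteq \nu:\ \nu/\mu\ \mathrm{disconnected}} \beta^{|\nu|-|\mu|}\, G^{\phi^+}_{\lambda/\mu}(\beta;\bfx_+;\bfy)\mid_{\bfx\mapsto\bfy}.
\]
If $\lambda/\nu$ has a diagonal cell $(i,i)$, then since $\mu \subseteq \nu$ the same cell lies in $\lambda/\mu$ for every term; any entry $k$ placed in $(i,i)$ contributes the factor $\beta(x_k \ominus y_k)$ to the tableau weight, which vanishes under $\bfx \mapsto \bfy$, so every summand is zero. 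If instead $\lambda/\nu$ meets a row $i$ with $\phi_i \leq 0$, then $\phi^+_i = 0$ and the final sentence of Corollary~\ref{C: Decompose positive flag} gives $a^{\lambda,\phi^+}_\nu = 0$ directly, hence $j^{\lambda,\phi^+}_\nu = 0$.

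For the second claim I would apply Corollary~\ref{L: negation flag} to express
\[
j^{\nu,\phi^-}_\rho(\beta;\bfy) = \omega_1\bigl(j^{\nu',\xi(\nu)}_{\rho'}(\beta;\bfy)\bigr).
\]
Since $\xi(\nu)$ is non-negative by construction, $\xi(\nu)^+ = \xi(\nu)$ and the first claim applies to the inner polynomial. A diagonal cell in $\nu/\rho$ transposes to a diagonal cell in $\nu'/\rho'$, handling that case immediately. A cell of $\nu/\rho$ in a non-negative row of $\phi$ yields, via Lemma~\ref{L: conjugate 0 row}, a cell of $\nu'/\rho'$ in a row where $\xi(\nu)$ is zero, which is the non-positive row condition for $\xi(\nu)$. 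Either way the inner polynomial vanishes by the first claim, and $\omega_1$ sends zero to zero.

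I do not expect a serious obstacle; the entire argument is bookkeeping around Corollary~\ref{C: Decompose positive flag}. The only points requiring brief care are the compatibility of $\phi^+$ with $\lambda$ needed to invoke that corollary, and the observation that transposition matches diagonal cells to diagonal cells while Lemma~\ref{L: conjugate 0 row} matches cells on non-negative rows of $\phi$ to cells on zero rows of $\xi(\nu)$.
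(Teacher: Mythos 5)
Your proof is correct and follows essentially the same route as the paper: you use the expansion of $j^{\lambda,\phi^+}_\nu$ from Corollary~\ref{C: Decompose positive flag} (the paper's~\eqref{eq:j-pos}) and handle the diagonal/zero-row cases, then reduce the $\phi^-$ statement to the $\phi^+$ one via $\omega_1$ and Lemma~\ref{L: conjugate 0 row}. Your explicit remark that $\phi^+$ must be checked to be compatible with $\lambda$ before invoking Corollary~\ref{C: Decompose positive flag} is a sound point of care that the paper leaves implicit.
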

\begin{proof}
Recall~\eqref{eq:j-pos}, which says
\[
j^{\lambda, \phi^+}_\nu(\beta; \bfy)
= \sum_{\mu \subseteq \nu:\ \nu/\mu \ \mathrm{disconnected}} \beta^{|\nu| - |\mu|} G^{\phi^+}_{\lambda/\mu}(\beta; \bfx_+; \bfy) \mid_{\bfx \mapsto \bfy}.
\]

If $\lambda/\nu$ has a cell
on the diagonal
or a non-positive row,
so does $\lambda/\mu$
for every $\mu \subseteq \nu$.
Therefore, 
by Remark~\ref{R: Pos summary},
we have the first statement.

For the second statement, 
recall Corollary~\ref{L: negation flag}
says $j^{\nu,  \phi^-}_\rho(\beta;\bfy)
= \omega_1(j^{\nu',  \xi(\nu)}_{\rho'}(\beta;\bfy))$.
If $\nu/\rho$ has a cell on the diagonal,
so does $\nu'/\rho'$.
If $\nu/\rho$ has a cell on a non-negative row,
by Lemma~\ref{L: conjugate 0 row},
$\nu'/\rho'$ has a cell on row $j$ with $\xi(\nu)^-_j = 0$.
Since $\xi(\nu)$ is non-negative, 
similar to the previous paragraph, 
we have $j^{\nu',  \xi(\nu)}_{\rho'}(\beta;\bfy) = 0$
in either case.
Thus, 
$j^{\nu,  \phi^-}_\rho(\beta;\bfy)$ vanishes.
\end{proof}

Consequently, 
we deduce there is only one non-zero summand
in~\eqref{eq: general j}.

\begin{cor}
\label{C: Unique nu}
For $\rho \subseteq \lambda$,
$j^{\lambda, \phi}_\rho(\beta;\bfy)$
vanishes if $\lambda/\rho$ has a cell 
on the diagonal or on row $i$
for some $i$ such that $\phi_i = 0$.
Otherwise, 
$$
j^{\lambda, \phi}_\rho(\beta;\bfy)
= j^{\nu, \phi^-}_\rho(\beta;\bfy)j^{\lambda, \phi^+}_\nu(\beta;\bfy)
$$
where 
$$
\nu_i :=
\begin{cases}
\lambda_i \textrm{ if $\phi_i < 0$,}\\
\rho_i = \lambda_i \textrm{ if $\phi_i = 0$,}\\
\rho_i \textrm{ if $\phi_i > 0$.}
\end{cases}
$$
\end{cor}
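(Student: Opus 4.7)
The plan is to apply~\eqref{eq: general j}, which expands
\[
j^{\lambda, \phi}_\rho(\beta; \bfy) = \sum_{\nu \subseteq \lambda} j^{\nu, \phi^-}_{\rho}(\beta; \bfy)\, j^{\lambda, \phi^+}_\nu(\beta; \bfy),
\]
and use Lemma~\ref{L: General vanish} to kill every summand except at most one.

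The first step is to translate the vanishing clauses of Lemma~\ref{L: General vanish} into pointwise constraints on $\nu$. For $j^{\lambda,\phi^+}_\nu(\beta;\bfy)$ to survive, $\lambda/\nu$ must avoid both the diagonal and every non-positive row of $\phi$; the row condition forces $\nu_i = \lambda_i$ whenever $\phi_i \leq 0$. Dually, $j^{\nu,\phi^-}_\rho(\beta;\bfy)$ surviving requires $\nu/\rho$ to avoid the diagonal and every non-negative row, which forces $\nu_i = \rho_i$ whenever $\phi_i \geq 0$. On each row with $\phi_i = 0$ the two rules combine to $\rho_i = \nu_i = \lambda_i$.

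From this I can read off the two vanishing statements. If $\lambda/\rho$ has a cell in a row $i$ with $\phi_i = 0$, then $\rho_i < \lambda_i$ contradicts the forced equality $\rho_i = \lambda_i$, so no $\nu$ satisfies both constraints and the sum is empty. For the diagonal case, since $\lambda/\nu$ and $\nu/\rho$ partition $\lambda/\rho$, any diagonal cell of $\lambda/\rho$ is inherited by one of them and kills every candidate $\nu$. Otherwise the two constraints pin $\nu$ down row by row to precisely the partition described in the statement, and the formula is the single surviving summand of~\eqref{eq: general j}. The only loose ends are checking that this $\nu$ is indeed a partition with $\rho \subseteq \nu \subseteq \lambda$ (using $\rho \subseteq \lambda$ and the monotonicity of $\phi$, so the transition from the ``$\nu_i = \lambda_i$'' rule to the ``$\nu_i = \rho_i$'' rule occurs at one threshold) and that these two mechanisms exhaust the possible cancellations. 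Since the entire argument is row-by-row bookkeeping assembled from Lemma~\ref{L: General vanish} and~\eqref{eq: general j}, I do not expect a serious obstacle.
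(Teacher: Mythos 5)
Your proposal is correct and matches the paper's proof: expand via~\eqref{eq: general j} and use Lemma~\ref{L: General vanish} to kill every summand except the one at the forced $\nu$. Your closing concern about ``exhausting the cancellations'' is a non-issue---you only need that all other summands vanish, which you establish, and the formula holds regardless of whether the surviving summand is itself zero---while the remaining check that the forced $\nu$ is a partition with $\rho \subseteq \nu \subseteq \lambda$ follows easily from $\rho \subseteq \lambda$ and the monotonicity of $\phi$.
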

\begin{proof}
We consider the summands in~\eqref{eq: general j}.
Suppose $\lambda/\rho$ has a cell 
on the diagonal or on a zero row.
Then for each $\rho \subseteq \nu \subseteq \lambda$,
this cell is either in $\nu/\rho$ or $\lambda/\nu$,
neutralizing either
$j^{\nu, \phi^-}_\rho(\beta;\bfy)$ 
or $j^{\lambda, \phi^+}_\nu(\beta;\bfy)$
by Lemma~\ref{L: General vanish}.

Now suppose $\lambda/\rho$ has no such cells. 
We have $\lambda_i = \rho_i$ if $\phi_i = 0$.
By Lemma~\ref{L: General vanish}
to make $j^{\lambda, \phi^+}_\nu(\beta;\bfy)$
non-zero, 
we must have $\nu_i = \lambda_i$
whenever $\phi_i < 0$.
To make $j^{\nu, \phi^-}_\rho(\beta;\bfy)$ 
non-zero, 
we must have $\nu_i = \rho_i$
whenever $\phi_i > 0$.
These conditions
uniquely determine a partition $\nu$.
\end{proof}

\begin{ex}
Suppose $\lambda = (7,4,2,2,1)$ and 
$\phi = (\overline{1}, 0, 1, 2, 4)$.
To make $j^{\lambda, \phi}_\rho$ non-zero,
we know $\lambda/\rho$ cannot have any cells
on the diagonal or in row $2$.
Suppose $\mu = (5,4,2,1,1)$.
Then by Corollary~\ref{C: Unique nu},
$j^{\lambda, \phi}_\rho(\beta;\bfy)
= j^{\nu, \phi^-}_\rho(\beta;\bfy)j^{\lambda, \phi^+}_\nu(\beta;\bfy)$,
where $\nu = (7,4,2,1,1)$.
If we change $\phi$ into
$(\overline{2}, \overline{1}, 1, 2, 3)$
and change $\mu$ into $(4,2,2,1)$,
then the $\nu$ in Corollary~\ref{C: Unique nu}
becomes $(7,4,2,1)$.
\end{ex}

Finally, for fixed $\nu$ we derive the finer version of 
$\beta$--Graham positivity.
Assume $j^{\lambda, \phi}_\rho \neq 0$.
Let $q$ be the largest such that $(q,q)$ is a cell 
in $\lambda$.
By Corollary~\ref{C: Unique nu},
we know $(q,q)$ is also a cell in $\rho$.
Thus, cells above the diagonal in $\lambda/\rho$
are above row $q$
whiles cells under the diagonal in $\lambda/\rho$
are under row $q$.
Following Remark~\ref{R: Pos summary},
we may characterize $j^{\lambda, \phi^+}_\nu(\beta; y)$.
\begin{lem}
\label{L: j+}
If we ignore the $\beta$,
we may describe $j^+ = j^{\lambda, \phi^+}_\nu(\beta; y)$
as follows:
\begin{itemize}
\item If $\phi_q > 0$,
then $j^+$ is a sum where each summand
is a distinct product of Type $1$ or Type $3$ terms.

\item If $\phi_q \leq 0$,
then $j^+$ is a sum where each summand
is a distinct product of Type $3$ terms.

\end{itemize}
\end{lem}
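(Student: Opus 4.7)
The plan is to start from~\eqref{eq:j-pos}, which writes $j^+ = j^{\lambda,\phi^+}_\nu(\beta;\bfy)$ as $\sum \beta^{|\nu|-|\mu|} G^{\phi^+}_{\lambda/\mu}(\beta;\bfx_+;\bfy)\mid_{\bfx\mapsto\bfy}$ over $\mu \subseteq \nu$ with $\nu/\mu$ disconnected, and to classify each summand using Remark~\ref{R: Pos summary}. The crux is to determine which of its four cases can arise based on the sign of $\phi_q$. As preparation I would record the geometry: since $q$ is maximal with $(q,q) \in \lambda$, every cell of $\lambda/\mu$ in a row $i > q$ satisfies $j \leq \lambda_i < i$ and so lies strictly below the diagonal. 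Conversely, a non-vanishing summand forces $\lambda/\mu$ to avoid the diagonal entirely, which in a row $i \leq q$ means $(i,i) \in \mu$; every cell of $\lambda/\mu$ in such a row therefore lies strictly above the diagonal. The Case Zero condition from Remark~\ref{R: Pos summary} also forbids cells in rows with $\phi^+_i = 0$, i.e., in non-positive rows.

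For the first case $\phi_q > 0$, weak monotonicity of $\phi$ gives $\phi_i > 0$ for every $i \geq q$; combined with the forbidden-zero condition, every row hosting a cell of $\lambda/\mu$ is positive. Hence $\lambda/\mu$ admits the diagonal splitting $\lambda/\mu^U \sqcup \lambda/\mu^D$ and the factorization $G^{\phi^+}_{\lambda/\mu} = G^{\phi^+}_{\lambda/\mu^U} \cdot G^{\phi^+}_{\lambda/\mu^D}$ established in Section~\ref{ss:pos-supp}. Lemma~\ref{l:upper diagonal pos} expresses the upper factor as a sum of products of distinct Type 1 weights, while Lemma~\ref{l:lower diagonal2} expresses the lower factor as a sum of products of distinct Type 3 weights; since Type 1 and Type 3 weights are automatically disjoint, the product across the two halves is itself a distinct product of terms drawn from Types 1 and 3 only, with no Type 2 appearing. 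For the second case $\phi_q \leq 0$, monotonicity forces $\phi^+_i = 0$ for every $i \leq q$, so the non-vanishing condition from Remark~\ref{R: Pos summary} excludes those rows from $\lambda/\mu$ entirely. Every cell of $\lambda/\mu$ then lies in some row $i > q$, hence below the diagonal, and Lemma~\ref{l:lower diagonal2} immediately writes each $G^{\phi^+}_{\lambda/\mu}\mid_{\bfx\mapsto\bfy}$ as a sum of distinct products of Type 3 terms.

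Given the machinery of Section~\ref{ss:pos-supp}, no step here is genuinely hard: the real obstacle was already cleared by Lemma~\ref{l:lower diagonal}, whose auxiliary flag $\psi$ and symmetrizing permutation $\pi$ identify below-diagonal cell contributions with Type 3 weights after the substitution $\bfx \mapsto \bfy$. The only subtlety I expect in the present argument is verifying that cells of $\lambda/\mu$ in rows $\leq q$ cannot appear when $\phi_q \leq 0$, which reduces cleanly to the monotonicity of the flag.
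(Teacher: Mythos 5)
Your proof is correct and follows essentially the same route as the paper's: start from equation~\eqref{eq:j-pos}, classify each $G^{\phi^+}_{\lambda/\mu}(\beta;\bfx_+;\bfy)\mid_{\bfx\mapsto\bfy}$ via Remark~\ref{R: Pos summary}, and in the case $\phi_q \leq 0$ observe that every above-diagonal cell would lie in a zero row of $\phi^+$, forcing $\lambda/\mu$ entirely below the diagonal so that only Case Down (Type~3) contributes. The extra detail you give about the diagonal factorization in the $\phi_q > 0$ case is already subsumed in the Remark, but it does no harm.
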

\begin{proof}
Recall $$j^+
= \sum_{\mu \subseteq \nu:\ \nu/\mu \ \mathrm{disconnected}} \beta^{|\nu| - |\mu|} G^{\phi^+}_{\lambda/\mu}(\beta; \bfx_+; \bfy) \mid_{\bfx \mapsto \bfy}.$$

By Remark~\ref{R: Pos summary},
each $G^{\phi^+}_{\lambda/\mu}(\beta; \bfx_+; \bfy) \mid_{\bfx \mapsto \bfy}$
is a sum where each summand
is a product of distinct terms of 
Type $1$ and $3$. 
So is $j^+$.

Now we further assume $\phi_q \leq 0$,
so $\phi^+_i = 0$ if $i \leq q$.
If $\lambda/\mu$ has a cell above the diagonal,
then that cell is on a non-positive row,
making $G^{\phi^+}_{\lambda/\mu}(\beta; \bfx_+; \bfy)$ vanish.
Thus, to make the summand non-zero, 
$\lambda/\mu$ must be under the diagonal.
By (Case Down) of Remark~\ref{R: Pos summary},
$j^+$
is a sum of products of distinct Type $3$ terms.
\end{proof}

We can deduce a similar
statement of $j^{\nu, \phi^-}_\rho(\beta; y)$.
\begin{lem}
\label{L: j-}
If we ignore the $\beta$,
we may describe $j^- = j^{\nu, \phi^-}_\rho(\beta; y)$
as follows:
\begin{itemize}
\item If $\phi_q < 0$,
then $j^-$ is a sum where each summand
is a distinct product of Type $2$ or Type $3$ terms.
\item If $\phi_q \geq 0$,
then $j^-$ is a sum where each summand
is a distinct product of Type $3$ terms.
\end{itemize}
\end{lem}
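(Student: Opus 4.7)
The strategy is to deduce Lemma~\ref{L: j-} from Lemma~\ref{L: j+} via the involution $\omega_1$ and the transposition identity in Corollary~\ref{L: negation flag}. That corollary gives
\[
j^{\nu,\phi^-}_\rho(\beta;\bfy) = \omega_1\bigl(j^{\nu',\xi(\nu)}_{\rho'}(\beta;\bfy)\bigr),
\]
and since $\xi(\nu)$ is non-negative we have $\xi(\nu)^+ = \xi(\nu)$, so Lemma~\ref{L: j+} applies directly to the transposed data $(\nu',\xi(\nu),\rho')$. The relevant value $q$ is unchanged in the transposed setting: $(q,q) \in \rho \subseteq \nu \subseteq \lambda$ transposes to $(q,q) \in \rho' \subseteq \nu' \subseteq \lambda'$, and $q$ is characterized by the diagonal.

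First I would analyze how $\omega_1$ acts on a single term of the form $y_i \ominus y_j$. A short calculation gives $(\ominus a) \ominus (\ominus b) = b \ominus a$, so
\[
\omega_1(y_i \ominus y_j) = y_{1-j} \ominus y_{1-i}.
\]
The induced map $(i,j) \mapsto (1-j,1-i)$ on index pairs sends Type 1 ($0 < i < j$) to Type 2 ($1-j < 1-i \leq 0$), sends Type 2 to Type 1, and sends Type 3 to Type 3; it is clearly injective, so distinct $\beta$-Graham positive monomials remain distinct. Since $\omega_1$ fixes $\beta$, the $\beta$-prefactors transfer unchanged.

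The proof then splits into two cases. If $\phi_q < 0$, I apply the weaker conclusion of Lemma~\ref{L: j+} to conclude that $j^{\nu',\xi(\nu)}_{\rho'}$ is a sum of distinct products of Type 1 or Type 3 terms; applying $\omega_1$ yields a sum of distinct products of Type 2 or Type 3 terms, as required. If $\phi_q \geq 0$, the key point is that $\xi(\nu)_q = 0$: since $\phi$ is weakly increasing, $\phi_q \geq 0$ forces $\phi_i \geq 0$ for all $i \geq q$, and because $(q,q) \in \nu$ we have $\nu'_q \geq q$, whence $\phi^-_{\nu'_q} = 0$ and $\xi(\nu)_q = -\phi^-_{\nu'_q} = 0$. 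Lemma~\ref{L: j+} then applies in the stronger form, so $j^{\nu',\xi(\nu)}_{\rho'}$ is a sum of distinct products of Type 3 terms, which $\omega_1$ preserves.

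The only subtle point is checking that $\omega_1$ cleanly translates products of distinct $\beta$-Graham positive monomials to products of distinct $\beta$-Graham positive monomials; once this bijection on term types is verified, the case analysis is essentially formal, and Lemma~\ref{L: j-} follows from Lemma~\ref{L: j+} without revisiting the tableau arguments of Section~\ref{ss:pos-supp}.
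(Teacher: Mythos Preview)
Your argument is correct and shares the paper's overall strategy: transpose via Corollary~\ref{L: negation flag} and analyze $j^{\nu',\xi(\nu)}_{\rho'}$ with the non-negative flag $\xi(\nu)$. The organizational difference is that you invoke Lemma~\ref{L: j+} directly on the transposed triple $(\nu',\xi(\nu),\rho')$, while the paper instead re-derives what it needs from Remark~\ref{R: Pos summary} and Lemma~\ref{L: conjugate 0 row}. Your route makes the $\omega_1$-duality between Lemmas~\ref{L: j+} and~\ref{L: j-} transparent and avoids repeating the tableau analysis; the one caveat is that Lemma~\ref{L: j+} is stated only for the fixed $(\lambda,\phi,\nu)$ of Section~\ref{s:tableaux}, so your invocation implicitly uses that its proof goes through for any compatible pair with non-negative flag and the same diagonal index $q$---which it does, and you correctly verify that $q$ is unchanged under transposition. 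Your computation $\omega_1(y_i\ominus y_j)=y_{1-j}\ominus y_{1-i}$ and the resulting type bijection (Type~1 $\leftrightarrow$ Type~2, Type~3 fixed), as well as the check $\xi(\nu)_q=0$ when $\phi_q\ge 0$, are both correct and in fact supply details the paper's proof only asserts.
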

\begin{proof}
We define flag $\xi$ for $\nu'$ 
by $\xi_i := -\phi^-_{\nu'_i}$,
so $\xi$ has only non-negative entries.

Recall $$j^-
= \omega(j^{\nu', \xi(\nu)}_{\rho'})
= \omega \left(\sum_{\mu \subseteq \rho:\ \rho/\mu \ \mathrm{disconnected}} \beta^{|\rho| - |\mu|} G^{\xi(\nu)}_{\nu'/\mu'}(\beta; \bfx_+; \bfy) \mid_{\bfx \mapsto \bfy}\right).$$

By Remark~\ref{R: Pos summary},
each $G^{\xi(\nu)}_{\nu'/\mu'}(\beta; \bfx_+; \bfy) \mid_{\bfx \mapsto \bfy}$
is a sum where each summand
is a product of distinct terms of 
Type $1$ and $3$. 
Under $\omega$,
Type $1$ terms become Type $2$
while Type $3$ terms remain
Type $3$.
Thus, 
$j^-$ is a sum where each summand
is a product of distinct terms of 
Type $2$ and $3$. 

Now we further assume $\phi_q \geq 0$.
If $\nu/\mu$ has a cell under the diagonal, 
say in row $i$,
we know $i \geq q$.
Thus, that cell is on a non-negative row.
By Lemma~\ref{L: conjugate 0 row},
$\nu'/\mu'$ has a cell in row $j$
with $\xi(\nu)_j = 0$,
making $G^{\xi(\nu)}_{\nu'/\mu'}(\beta; \bfx_+; \bfy)$ vanish.
To make the summand non-zero, 
$\nu/\mu$ must be above the diagonal,
so $\nu'/\mu'$ is under the diagonal.
By (Case Down) of Remark~\ref{R: Pos summary},
$j^{\nu', \xi(\nu)}_{\rho'}$
is a sum of products of distinct Type $3$ terms.
So is $j^-$.
\end{proof}

Finally, we combine 
Lemma~\ref{L: j+}
and Lemma~\ref{L: j-}
to obtain the finer posivity
of $j^{\lambda, \phi}_\rho = j^+ j^-$.

\begin{thm}
\label{t:Krefined-ext}
Either (or both) of the following two cases hold.
\begin{itemize}
\item Case 1: $\phi_q \leq 0$.
Here $j^{\lambda, \phi}_\rho(\beta;\bfy)$
is a sum of products
of Type 2 and Type 3 terms with a power of $\beta$.
In each summand, 
every Type 2 (resp. 3) term appears at most once
(resp. twice). 
\item Case 2: $\phi_q \geq 0$.
Here $j^{\lambda, \phi}_\rho(\beta;\bfy)$
is a sum of products
of Type 1 and Type 3 terms with a power of $\beta$.
In each summand, 
every Type 1 (resp. 3) term appears at most once
(resp. twice). 
\end{itemize}
\end{thm}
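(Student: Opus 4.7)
The plan is to deduce Theorem~\ref{t:Krefined-ext} by directly multiplying the characterizations established in Lemma~\ref{L: j+} and Lemma~\ref{L: j-}, after using Corollary~\ref{C: Unique nu} to reduce $j^{\lambda,\phi}_\rho$ to a product of two factors. First I would handle the trivial case in which $j^{\lambda,\phi}_\rho(\beta;\bfy) = 0$: then every claim about its summands is vacuous, so both Case 1 and Case 2 hold automatically. Otherwise, Corollary~\ref{C: Unique nu} provides a unique $\nu$ with $\rho \subseteq \nu \subseteq \lambda$ such that
\[
j^{\lambda, \phi}_\rho(\beta;\bfy) = j^{-} \cdot j^{+}, \qquad j^{-} := j^{\nu,\phi^{-}}_\rho(\beta;\bfy), \qquad j^{+} := j^{\lambda,\phi^{+}}_\nu(\beta;\bfy),
\]
so the task reduces entirely to combining the two factor descriptions.

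Next I would split into cases according to the sign of $\phi_q$. In Case 1 ($\phi_q \leq 0$), Lemma~\ref{L: j+} says $j^{+}$ is a sum whose summands are distinct products of Type 3 terms (times a power of $\beta$), while Lemma~\ref{L: j-} says $j^{-}$ is a sum whose summands are distinct products of Type 2 and Type 3 terms. Expanding the product $j^{-} \cdot j^{+}$ distributes over these summands, and every resulting monomial is a product of Type 2 and Type 3 terms. A Type 2 factor can only arise from $j^{-}$, where it appears at most once per summand, so it appears at most once overall. A Type 3 factor may arise from each of $j^{+}$ and $j^{-}$, each contributing at most one copy, giving the bound of at most two. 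Case 2 ($\phi_q \geq 0$) is completely symmetric: now $j^{-}$ uses only Type 3 terms (Lemma~\ref{L: j-}) and $j^{+}$ uses Type 1 and Type 3 terms (Lemma~\ref{L: j+}), yielding the same argument with the roles of $j^{+}$ and $j^{-}$ swapped and Type 1 replacing Type 2.

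Finally I would note that Case 1 and Case 2 overlap exactly when $\phi_q = 0$, in which case both lemmas force $j^{+}$ and $j^{-}$ to be built from Type 3 terms only, and the resulting monomials use only Type 3 terms, each appearing at most twice. This is consistent with either case statement, which is why the theorem is phrased as an ``either (or both)'' dichotomy.

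Honestly, most of the hard work has already been done in Lemmas~\ref{L: j+} and~\ref{L: j-}, so the only point requiring some care is the bookkeeping of how often a term can reappear when a single monomial is obtained as a product of one factor from $j^{+}$ and one from $j^{-}$. The key observation to make explicit is that Types 1 and 2 are disjoint in the allowed factor, so overlaps can happen only among Type 3 terms, and the resulting multiplicity is bounded by $1 + 1 = 2$. No further combinatorial constructions are needed; the theorem follows formally from the previously established factorization and the type restrictions on each factor.
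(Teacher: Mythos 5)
Your proof is correct and follows exactly the route the paper takes: reduce via Corollary~\ref{C: Unique nu} to the factorization $j^{\lambda,\phi}_\rho = j^- \cdot j^+$, then apply Lemmas~\ref{L: j+} and~\ref{L: j-} to each factor and multiply out, noting that Type 1/Type 2 terms live in only one factor so they each appear at most once, while Type 3 terms can come from both factors giving multiplicity at most two. The paper's own proof is the one-liner "Immediate from Lemma~\ref{L: j+} and Lemma~\ref{L: j-}," and you have simply filled in the bookkeeping (including the $\phi_q = 0$ overlap and the vacuous vanishing case, which the paper handles implicitly by the standing assumption $j^{\lambda,\phi}_\rho \neq 0$ just before the theorem).
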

\begin{proof}
Immediate from Lemma~\ref{L: j+}
and Lemma~\ref{L: j-}.
\end{proof}

\section{Final Remarks}
\label{s:remarks}

We conclude with some brief remarks, including suggestions for further directions.

\subsection{An explicit tableaux formula}
Recall for compatible $\lambda$ and $\phi$ that $w_{\lambda,\phi}$ is the unique vexillary permutation with shape $\lambda$ and flag $\phi$. 
The permutation $\neg w_{\lambda, \phi}$ is also vexillary, e.g., by Lemma~\ref{l:neg-flag}. 

We extend the map $\omega_1$ acting on polynomials to one acting on tableaux.
A set-valued tableau $T$ is 
\emph{row-strict-decreasing}
if entries of $T$ strictly decrease in each row and weakly decrease in each column. 
The \emph{$r$--weight} of an element in cell $(r,c)$ of the row-strict-decreasing tableau $T$ is $y_{i + c - r} \ominus x_i$.
Then the \emph{$r$--weight} of $T$, denoted $\wt_r(T)$, is the product of each element weight.

For $\lambda$ a partition,  sequences of the form $\phi = (\phi_1 \geq \dots \geq \phi_{\lambda_1})$ are \emph{reverse flags}
for $\lambda$. 
Let $\RSDT^\phi(\lambda/\mu)$ 
be the set of row-strict-decreasing tableaux
with shape $\lambda/\mu$
such that entries in column $i$ are weakly
larger than $\phi_i$.
Recall $\RSDT_-^\phi(\lambda/\mu)$ is the subset of $\RSDT^\phi(\lambda/\mu)$
consisting of tableaux whose entries are subsets of $\ZZ_\leq$. 

Let $\omega_1$ act on set--valued tableaux by conjugating, then replacing each value $i$ with $1-i$.
To justify our use of the notation $\omega_1$, observe:
\begin{lem}
\label{L: Omega 1 on tableaux}
The map $\omega_1:\SetSSYT(\lambda/\mu) \to \RSDT(\lambda'/\mu')$ is a bijection
with 
\[
\wt_r(\omega_1(T)) = \omega_1(\wt(T)).
\]
Moreover, 
$\omega_1$ restricts to a bijection from 
$\SetSSYT^\phi_{+}(\lambda/\mu)$ to $\RSDT_-^{1 - \phi}(\lambda'/\mu')$
where $1 - \phi$ is the reverse flag obtained by replacing
every $i$ in $\phi$ by $1 - i$.
\end{lem}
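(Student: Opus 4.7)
The plan is to verify each assertion by direct computation, treating $\omega_1$ as the explicit involution on tableaux. First I would check that transposing a tableau and replacing each entry $i$ by $1-i$ preserves exactly the right inequalities: the semistandard conditions $\max T(r,c) \leq \min T(r,c{+}1)$ and $\max T(r,c) < \min T(r{+}1,c)$ on $T$ become, after transposition, weakly increasing columns and strictly increasing rows; after negating via $i \mapsto 1-i$ (which reverses the order on $\ZZ$), these reverse into weakly decreasing columns and strictly decreasing rows, which is the row-strict-decreasing condition on $\omega_1(T)$. Since $\omega_1 \circ \omega_1$ is the identity on entries and transposition is an involution, $\omega_1$ is automatically a bijection.

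The key algebraic step is the identity
\[
(\ominus a) \ominus (\ominus b) = b \ominus a,
\]
which I would prove by direct rational-function simplification using $a \ominus b = (a-b)/(1+\beta b)$ and $\ominus a = -a/(1+\beta a)$. Since $\omega_1$ is a ring homomorphism sending $x_i \mapsto \ominus x_{1-i}$ and $y_j \mapsto \ominus y_{1-j}$, this identity yields $\omega_1(x_i \ominus y_j) = y_{1-j} \ominus x_{1-i}$. For a cell $(r,c) \in \lambda/\mu$ containing entry $i$ in $T$, the contribution to $\wt(T)$ is $\beta(x_i \ominus y_{i+c-r})$, which $\omega_1$ sends to $\beta(y_{1-i-c+r} \ominus x_{1-i})$; this is exactly the $r$--weight of the entry $1-i$ sitting in cell $(c,r) \in \lambda'/\mu'$ of $\omega_1(T)$, namely $y_{(1-i)+r-c} \ominus x_{1-i}$. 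Taking the product over all cells—and noting the total number of entries is preserved so any prefactor powers of $\beta$ match—yields $\wt_r(\omega_1(T)) = \omega_1(\wt(T))$.

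For the flag statement, $T \in \SetSSYT^\phi_+(\lambda/\mu)$ means row $r$ of $T$ contains values from $\{1,\ldots,\phi_r\}$. After transposition these values sit in column $r$ of $\omega_1(T)$; after applying $i \mapsto 1-i$ they lie in $\{1-\phi_r,\ldots,0\} \subseteq \ZZ_-$. Hence $\omega_1(T) \in \RSDT_-(\lambda'/\mu')$, and column $r$ has entries weakly greater than $1-\phi_r = (1-\phi)_r$, so $\omega_1(T) \in \RSDT_-^{1-\phi}(\lambda'/\mu')$. The reverse containment follows identically because $\omega_1$ is an involution and the transformation $\phi \mapsto 1-\phi$ is self-inverse on reverse flags.

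The main obstacle is the $\ominus$-identity $(\ominus a) \ominus (\ominus b) = b \ominus a$, but it is a short mechanical calculation: one checks $1 + \beta(\ominus b) = 1/(1+\beta b)$ and $\ominus a - \ominus b = (b-a)/((1+\beta a)(1+\beta b))$, then divides. Everything else is a careful tracking of inequalities and indices under transposition combined with $i \mapsto 1-i$.
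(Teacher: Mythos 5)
Your proof is correct. Note that the paper states Lemma~\ref{L: Omega 1 on tableaux} without giving a proof, so there is nothing to compare against; your direct verification is the natural argument. All the key points are handled properly: transposition followed by the order-reversing substitution $i\mapsto 1-i$ converts the semistandard inequalities into the row-strict-decreasing inequalities (and vice versa, since both operations are involutions, which also gives bijectivity); the rational-function identity $(\ominus a)\ominus(\ominus b)=b\ominus a$ is correctly derived from $\ominus a = -a/(1+\beta a)$; the index bookkeeping $\omega_1\bigl(x_i\ominus y_{i+c-r}\bigr)=y_{(1-i)+r-c}\ominus x_{1-i}$ matches the $r$--weight of the element $1-i$ in cell $(c,r)$; and the flag translation row $r$ bounded above by $\phi_r$ $\leftrightarrow$ column $r$ bounded below by $1-\phi_r$ is verified in both directions.

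One small caveat worth stating explicitly: as literally written in the paper, $\wt_r(T)$ is defined as the bare product $\prod (y_{i+c-r}\ominus x_i)$ without the per-element factor of $\beta$ or the global $\beta^{-|\lambda/\mu|}$ that appear in $\wt(T)$, whereas $\omega_1$ fixes $\beta$. So the equality $\wt_r(\omega_1(T))=\omega_1(\wt(T))$ only holds if $\wt_r$ is read with the analogous $\beta$ prefactors. Your remark that the total number of entries is preserved so the powers of $\beta$ match is the right reconciliation, but it silently corrects the definition rather than deducing it; it would be worth saying so explicitly.
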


As a consequence, we see $a^{\lambda,\phi_-}_\rho(\beta;\bfx;\bfy)$ can be expressed as a sum over row strict decreasing tableaux with weight function $\wt_r$.
Then~\eqref{eq: general a} can be viewed as a sum over tableaux, with positive entries forming a semistandard tableau weighted according to $\wt$ and non-positive entries forming a row strict decreasing tableau weighted according to $\wt_r$.

\subsection{More permutations} A complete combinatorial proof that double $\beta$--Edelman--Greene coefficients are $\beta$--Graham positive remains an open problem, even for $\beta =0$.
Our proof depends critically on having a tableau formula for $\cev{\fkG}_w(\beta;\bfx;\bfy)$, which only exists for $w$ vexillary.
To gain a tableau formula for all $w \in S_\ZZ$, one could extend the Lascoux decomposition of the Grothendieck polynomial $\fkG_w(\beta;\bfx_+)$ to $\cev{\fkG}_w(\beta;\bfx;\bfy)$.
We have explored a candidate for this expansion in the cohomology case.
In principle extending our approach appears plausible, but any extension of Lemma~\ref{l:flag-symm} would be far more subtle.

A relatively easy extension of Theorem~\ref{t:Kmain} relies on a recurrence for $\beta$--Grothendieck polynomials called the transition equations~\cite{lascoux2001transition,weigandt2021bumpless}, instances of which are determined by picking certain boxes from the Rothe diagram.
If we can apply transition to a box $(r,c)$ with $r \prec c$ for $w \in S_\ZZ$ and the resulting terms in the recurrence have Graham positive double $\beta$--Edelman--Greene coefficients, so will $w$.
Transition equations terminate with vexillary permutations, and this approach yields many more $w \in S_\ZZ$ for which the conclusion of Theorem~\ref{t:Kmain} applies, but this set is not easily characterized.

\subsection{Non-vanishing coefficients}
Recall the Edelman--Greene coefficient of $\mu$ for $w \in S_\ZZ$ is $j^w_\mu := j^w_\mu(0)$. These are the codimension zero double Edelman--Greene coefficients.
Note that $\lambda(w)$ is well-defined even for non-vexillary permutations, and let $\ll$ denote dominance order for integer partitions.
Billey and Pawlowski have given an elegant necessary condition for Edelman--Greene coefficients to be non-zero.

\begin{lem}[{\cite[Lem 3.11]{billey2014permutation}}]
\label{l:billey-pawlowski}
For $w \in S_\ZZ$, $j^w_\mu = 0$ unless $\lambda(w) \ll \mu \ll \lambda(w^{-1})'$.
    
\end{lem}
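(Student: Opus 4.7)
The plan is to prove both dominance bounds using the Edelman--Greene (EG) correspondence. EG insertion provides a bijection between reduced words of $w$ and pairs $(P,Q)$ of the same shape $\mu$, where $P$ is an increasing tableau whose column-reading word is a reduced word of $w$ and $Q$ is a standard Young tableau. Consequently, $j^w_\mu$ equals the number of such $P$'s of shape $\mu$, so it suffices to show that the shape $\mu(\mathbf{a})$ produced by EG insertion on any reduced word $\mathbf{a}$ of $w$ lies in the dominance interval $[\lambda(w), \lambda(w^{-1})']$.

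For the upper bound $\mu \ll \lambda(w^{-1})'$, I would invoke the EG analogue of Greene's theorem, which identifies $\mu_1 + \cdots + \mu_k$ with the maximum total length of a union of $k$ weakly increasing subsequences of $\mathbf{a}$. I would then bound this maximum by $\lambda(w^{-1})'_1 + \cdots + \lambda(w^{-1})'_k$, using the identity $\lambda(w^{-1})'_j = |\{i : c_i(w^{-1}) \geq j\}|$ to rewrite the right-hand side as $\sum_i \min(c_i(w^{-1}), k)$. The underlying combinatorial lemma is that any weakly increasing subsequence of a reduced word can contain at most one copy of each letter (by the reducedness and nil-Coxeter considerations), so a union of $k$ weakly increasing subsequences contains letter $i$ at most $\min(c_i(w^{-1}), k)$ times, where $c_i(w^{-1})$ bounds the total number of occurrences of $s_i$ in any reduced word of $w$.

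For the lower bound $\lambda(w) \ll \mu$, I would derive the symmetry $j^w_\mu = j^{w^{-1}}_{\mu'}$ from the behavior of EG insertion under reversal: the reversal map $\mathbf{a} \mapsto \mathbf{a}^{\mathrm{rev}}$ is a bijection between reduced words of $w$ and of $w^{-1}$ that conjugates the insertion shape, i.e.\ $\mu(\mathbf{a}^{\mathrm{rev}}) = \mu(\mathbf{a})'$. Applying the upper bound just established to $w^{-1}$ in the shape $\mu'$, one gets $\mu' \ll \lambda(w)'$ whenever $j^{w^{-1}}_{\mu'} \neq 0$; since $\mu' \ll \lambda(w)'$ is equivalent to $\lambda(w) \ll \mu$ under partition conjugation (which reverses dominance), this gives the lower bound.

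The main technical obstacle is the inversion-counting lemma bounding $k$-fold weakly increasing subsequence unions of a reduced word by the capped sum $\sum_i \min(c_i(w^{-1}), k)$; this requires carefully relating letters of a reduced word to inversions of $w$ (using that different reduced words visit the inversions in different orders but with fixed multiplicities). The Greene-type theorem for EG insertion and the reversal identity $\mu(\mathbf{a}^{\mathrm{rev}}) = \mu(\mathbf{a})'$ are both classical results from the EG insertion literature and can be cited rather than reproved.
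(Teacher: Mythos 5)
The paper cites this result from Billey--Pawlowski~\cite{billey2014permutation} without giving its own proof, so there is no argument in the paper to compare against; I will instead assess your attempt directly.

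Your overall architecture is reasonable: use the Edelman--Greene correspondence to interpret $j^w_\mu$ as a count of insertion tableaux, apply a Greene-type theorem to control the shape, and use the reversal symmetry $\mathbf{a} \mapsto \mathbf{a}^{\mathrm{rev}}$ to deduce the lower bound from the upper one. The reversal step and the rewriting $\lambda(w^{-1})'_1 + \cdots + \lambda(w^{-1})'_k = \sum_i \min(c_i(w^{-1}),k)$ are both fine. However, the two lemmas on which the key inequality rests are both false, so the argument has a genuine gap.

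First, you claim that any weakly increasing subsequence of a reduced word contains each letter at most once (by reducedness). This is not true. Take $w = s_1 s_2 s_1$ with reduced word $\mathbf{a} = (1,2,1)$: the subsequence at positions $1$ and $3$ is $(1,1)$, which is weakly increasing and repeats the letter $1$. Reducedness only forces a separating letter from $\{a-1, a+1\}$ \emph{between} two occurrences of $a$ in the full word, but nothing prevents you from skipping that separator when extracting the subsequence. (If you instead meant strictly increasing subsequences, the statement becomes vacuous and the appeal to reducedness is not needed --- but then the Greene-type theorem you want is the weakly-increasing one, since reduced words can have repeated letters.) Second, you claim $c_i(w^{-1})$ bounds the number of occurrences of $i$ in any reduced word of $w$. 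This is also false: for $w = 321 = w^{-1}$, the reduced word $(2,1,2)$ contains the letter $2$ twice, but $c_2(w^{-1}) = 1$. Indeed, the number of $s_i$'s in a reduced word is not a single-permutation statistic like the code; it varies with the word and in general exceeds $c_i(w^{-1})$. With both of these ingredients gone, the per-letter cap $\min(c_i(w^{-1}),k)$ on a union of $k$ weakly increasing subsequences does not follow, and the inequality $\mu_1 + \cdots + \mu_k \le \sum_i \min(c_i(w^{-1}),k)$ is left unestablished. You would need a different mechanism --- for instance, working directly with inversions rather than with letters, or with the recording tableau and descents as in Billey--Pawlowski --- to obtain the dominance bound. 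A secondary point: the Greene-type theorem you invoke is for RSK; to apply it to the EG shape you should also cite the fact that EG insertion and RSK insertion of a reduced word produce the same shape (this is a theorem of Edelman and Greene), since EG insertion is a genuine modification of RSK.
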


From this perspective, $w$ is vexillary if and only $\lambda(w) = \lambda(w^{-1})'$.
It is natural to ask:
\begin{ques}
    For $w \in S_\ZZ$ and $\mu$ a partition, when is $j^w_\mu(\beta;\bfy)$ non-zero?
\end{ques}
This would be an interesting question to understand even when  setting $\beta = 0$.
In the vexillary case for $\mu \subseteq \lambda(w)$ so that $\lambda(w)/\mu$ contains no entries on the diagonal, our tableau formula allows us to determine when whether the coefficient vanishes in polynomial time.
Otherwise the coefficient vanishes.
Additionally, we propose extending Lemma~\ref{l:billey-pawlowski} to double $\beta$--Edelman--Greene coefficients as an interesting problem, and further motivation for understanding such coefficients in general.
For this problem, both the $\beta = 0$ and $\bfy \mapsto 0$ cases are also of interest.

\subsection{Variants}
One promising avenue is to define backstable Schubert polynomials and double Stanley symmetric functions in the other types.
In many ways, the double Schubert polynomials of~\cite{ikeda2011double} and double $\beta$--Grothendieck polynomials~\cite{kirillov2017construction} in Types B, C and D are \emph{already} backstable\footnote{This observation was first made in~\cite{marberg2021principal}, where the single backstable $\beta$--Grothendieck polynomial was first defined.} with formulas akin to Proposition~\ref{p:Kbackstable-expansion}, so such a definition is feasible.
As evidence for the existence of these objects and corresponding Graham positivity, one could attempt to extend Theorems~\ref{t:Kmain} to flagged double Schur-P and-Q functions and their $K$--theoretic analogues.
Flagged double Schur-P and-Q functions are introduced via Pfaffian formulas in ~\cite{anderson2012degeneracy,anderson2018chern} with a tableau formula for the latter in~\cite{matsumura2023tableau}.

\subsection{Schubert structure coefficients}
One of the most important open problems in algebraic combinatorics is to compute Schubert structure coefficients.
The most classical problem of this form is solved by the Littlewood--Richardson rule:
\[
s_\lambda \cdot s_\mu = \sum_{\nu} c^\nu_{\lambda\mu} s_\nu,
\]
where $c^\nu_{\lambda\mu}$ has a combinatorial description.
This rule has been generalized to the equivariant setting~\cite{sagan2001symmetric,knutson2003puzzles}, to $K$--theory~\cite{buch2002littlewood} and to both simultaneously~\cite{pechenik2017equivariant}.
A combinatorial rule for the product of Schubert polynomials
\begin{equation}
    \label{eq:structure}
    \fkS_u(\bfx) \cdot \fkS_v(\bfx) = \sum_w c^w_{uv} \fkS_w
\end{equation}
is open.
Similarly, there are generalizations to the equivariant setting where $c^w_{uv}$ is a Graham positive polynomial, and to $K$--theory, as well as to both simultaneously.
Indeed, Graham positivity was introduced to understand the equivariant setting of this problem~\cite{graham2001positivity}.
As mentioned, the coefficients for single Stanley symmetric functions and their $K$--theoretic analogues have been used to prove rules for special cases of~\eqref{eq:structure}.
This raises:

\begin{ques}
    Can equivariant ($K$--theoretic) Schubert structure coefficients be computed using double ($\beta$--)Edelman--Greene coefficients?
\end{ques}

It would be an accomplishment to recover equivariant Grassmannian cohomology~\cite{knutson2003puzzles} or $K$--theory structure coefficients~\cite{pechenik2017equivariant} in this way.
One piece of evidence that this question could prove fruitful is that the proof from~\cite{anderson2023strong} that the $\cev{\fkG}_w(\beta;\bfx;\bfy)$'s are closed under multiplication relies on the Graham positivity of double Stanley symmetric functions.
Additionally, Thomas Lam has informed us that double $\beta$--Edelman--Greene coefficients should compute equivariant $K$--homology for the Grassmannian, first computed in~\cite{wheeler2019littlewood}.
Additionally, we suspect these coefficients should generalize results from~\cite{molev2009co} to double Grothendiecks.

\

\noindent \textbf{Acknowledgements:} We thank Dave Anderson for helping us better understand the geometric setting for our work and Thomas Lam for sharing his proof outline for Proposition~\ref{p:backstable-grothendieck-ring}.

\appendix

\section{Details on double symmetric functions and $K$--theoretic extensions}
\label{a:functions}

The main goal of this Appendix is to prove Proposition~\ref{p:backstable-grothendieck-ring}.
We begin by remarking that, while~\cite{lam2021Ktheory} works with the specialization $\beta = -1$, by inserting powers of $\beta$ so that all terms are homogeneous with the convention $\deg \beta = -1 $, we recover our conventions.
This is explained in full detail in~\cite[Rem~7.2]{anderson2023strong}.

Let $p_k(\bfx_-;\bfy) = p_k(\bfx_-/\bfy) = \sum_{i \leq 0} x_i^k - y_i^k$, and define $\Lambda(\bfx_-;\bfy)$ as the $\CC[\bfy]$--algebra generated by the $p_k(\bfx;\bfy)$'s for all $k$.
For any symmetric function $f(\bfx_-)$, we can expand $f$ into the power sum basis and apply the map $p_k(\bfx_-) \mapsto p_k(\bfx_-/\bfy)$ to construct a new symmetric function $f(\bfx_-/\bfy)$.
For $M$ a graded algebra, let $M^\beta$ be the \emph{homogenized graded completion} of $M$, that is the set of formal power series
\[
\left\{\sum_{i = k}^\infty \beta^{i - k}m_i: k \in \ZZ_+, \deg(m_i) = i \ \mbox{in}\ M\right\}
\]
with $\deg(\beta) = -1$.
Then $\Gamma(\beta;\bfx_-;\bfy) = \mbox{span}_{R(\beta;\bfy)}(\{G_\lambda(\beta;\bfx_-;\bfy)\})$ is a subset of $\Lambda^\beta(\bfx_-;\bfy)$.
In~\cite{lam2021Ktheory}, it is shown for $w \in S_\ZZ$ that $\cev{\fkG}_w(\beta;\bfx;\bfy) \in R(\beta;\bfx;\bfy) \otimes \Lambda^\beta(\bfx_-;\bfy)$.
Our goal is to prove:
\begin{prop}[{Proposition~\ref{p:backstable-grothendieck-ring}}]
    For $w \in S_\ZZ$, $\cev{\fkG}_w(\beta;\bfx;\bfy)$ lies in $R(\beta;\bfx;\bfy) \otimes \Gamma(\beta;\bfx_-;\bfy)$.
\end{prop}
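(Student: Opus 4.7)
My plan is to refine the weaker inclusion $\cev{\fkG}_w(\beta;\bfx;\bfy) \in R(\beta;\bfx;\bfy) \otimes \Lambda^\beta(\bfx_-;\bfy)$ recalled from \cite{lam2021Ktheory} to the stronger inclusion in $R(\beta;\bfx;\bfy) \otimes \Gamma(\beta;\bfx_-;\bfy)$, proceeding by induction on the length of $w$, with Grassmannian permutations as the base case. Specifically, when $w = w_\lambda$ is Grassmannian, Proposition~\ref{p:Kgrassmannian} gives $\cev{\fkG}_{w_\lambda}(\beta;\bfx;\bfy) = G_\lambda(\beta;\bfx_-;\bfy)$, which lies trivially in $\Gamma(\beta;\bfx_-;\bfy)$, so the claim is immediate.

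For the inductive step, I would use the action of the isobaric divided difference operators $\pi_i$ on $\cev{\fkG}_w$. Every $w \in S_\ZZ$ can be obtained from a suitable Grassmannian $w_\mu$ via iterated application of $\pi_i$ acting on the $\bfx$-variables (up to factors of $\beta$), so it suffices to verify that $R(\beta;\bfx;\bfy) \otimes \Gamma(\beta;\bfx_-;\bfy)$ is preserved by these operators. When $i \geq 1$, the $G_\mu(\beta;\bfx_-;\bfy)$ factors involve no $x_j$ with $j \geq 1$ and hence are fixed by $\pi_i$, reducing the question to the closure of $R(\beta;\bfx;\bfy)$ under $\pi_i$, which is a direct check using the defining presentation of $R$. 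For $i \leq 0$, one may use Proposition~\ref{p:Kgrassmannian} to rewrite $G_\mu$ as $\cev{\fkG}_{w_\mu}$, apply the Leibniz rule for $\pi_i$ to a product $r(\beta;\bfx;\bfy)\cdot G_\mu(\beta;\bfx_-;\bfy)$, and invoke the inductive hypothesis at each resulting summand, together with the closure of $\Gamma$ under multiplication from \cite{pechenik2017equivariant}.

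The main obstacle is the $i \leq 0$ case. Here the Leibniz rule for $\pi_i$ introduces denominators of the form $1 + \beta x_i$, and one must verify these are absorbed into $R(\beta;\bfx;\bfy)$ rather than producing an element of a strictly larger field of rational functions. Since $R(\beta;\bfx;\bfy)$ by construction contains $1/(1 + \beta x_i)$ for all $i \in \ZZ$, this should go through with careful bookkeeping of the denominators produced at each inductive step. A secondary subtlety is ensuring the induction is well-founded: this uses the stability of $\cev{\fkG}_w$ under the shift $\iota$, together with the observation that any $w \in S_\ZZ$ can be reduced to a Grassmannian permutation $w_\mu$ with $|\mu| \leq \ell(w)$, so the induction bottoms out at the base case handled by Proposition~\ref{p:Kgrassmannian}.
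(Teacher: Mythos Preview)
Your inductive scheme has a structural gap. The operator $\pi_i$ sends $\cev{\fkG}_w$ to $\cev{\fkG}_{ws_i}$ only when $i\in\des(w)$, hence it \emph{decreases} length. Thus obtaining $\cev{\fkG}_w$ from $\cev{\fkG}_{w_\mu}$ by iterated $\pi_i$'s requires $w\leq_R w_\mu$ in right weak order, which forces $|\mu|\geq \ell(w)$, not $|\mu|\leq \ell(w)$ as you write. More seriously, not every $w\in S_\ZZ$ lies below a $0$--Grassmannian permutation in right weak order. For a $0$--Grassmannian $w_\mu$ the left inversion set consists of pairs $(a,b)$ with $a<b$, $w_\mu^{-1}(a)\geq 1$ and $w_\mu^{-1}(b)\leq 0$; in particular no two left inversions can ``chain'' through a common middle value. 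But $w=s_0s_1s_0$ has left inversions $(0,1)$ and $(1,2)$, which would force $w_\mu^{-1}(1)\leq 0$ and $w_\mu^{-1}(1)\geq 1$ simultaneously. So $s_0s_1s_0$ cannot be reached from any $w_\mu$ by divided differences, and your base case never feeds into it. The Leibniz argument for $i\leq 0$ has a related circularity: applying $\pi_i$ to $G_\mu=\cev{\fkG}_{w_\mu}$ produces $\cev{\fkG}_{w_\mu s_i}$, which is no longer Grassmannian, so you would need the proposition for a permutation not covered by the inductive hypothesis.

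The paper's proof avoids this by not inducting on $\ell(w)$ at all. It invokes Proposition~\ref{p:Kbackstable-expansion}, which expresses $\cev{\fkG}_w$ as a finite sum of products $\fkG_{u^{-1}}(\ominus\bfy)\cdot F_v(\bfx_-/\bfy)\cdot \fkG_z(\bfx)$. The outer factors lie in $R(\beta;\bfy)$ and $R(\beta;\bfx)$ automatically, and since $F_v$ expands into single stable Grothendiecks, the whole problem reduces to showing that each $G_\lambda(\beta;\bfx_-/\bfy)$ (the supersymmetric specialization) lies in $\Gamma(\beta;\bfx_-;\bfy)$. That is then proved by induction on $|\lambda|$, using the Grassmannian case of Proposition~\ref{p:Kbackstable-expansion} together with Proposition~\ref{p:Kgrassmannian} to solve for $F_{w_\lambda}(\bfx_-/\bfy)$ in terms of $G_\nu(\beta;\bfx_-;\bfy)$ with $|\nu|<|\lambda|$.
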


Recall the \emph{Grothendieck polynomial} of $w \in S_\infty$ is $\fkG_w(\beta;\bfx_+) = \fkG_w(\beta;\bfx_+;\bfy_+)\mid_{\bfy \mapsto 0}$.
% We can extend this definition to $S_{-\infty} := \{\neg w: w \in S_\infty\}$ and $S_{\neq 0} = S_{-\infty} \times S_\infty$ by setting $\fkG^\beta_w(\bfx;\bfy) = \tilde{\omega} \fkG^\beta_{\neg w}(\bfx;\bfy)$ for $w \in S_{-\infty}$ and $\fkG^\beta_{u\times v} = \fkG^\beta_u \cdot \fkG^\beta_v$ for $u \times v \in S_{\neq 0}$.
The (single) \emph{stable Grothendieck polynomial} of $\lambda$ is $G_\lambda(\beta;\bfx_-) = G_\lambda(\beta;\bfx_-;\bfy) \mid_{\bfy \mapsto 0}$.
The (single) \emph{$\beta$--Stanley symmetric function}\footnote{More commonly referred to as the stable Grothendieck polynomial of $w$.} of $w \in S_\ZZ$ is $F_w(\beta;\bfx_-) = F_w(\beta;\bfx_-;\bfy)\mid_{\bfy \mapsto 0}$.
As shown in~\cite{buch2008stable}
\[
F_w(\beta;\bfx_-) \in \ZZ_{\geq 0}[\beta^{m - \ell(w)}G_\lambda(\beta;\bfx_-):\lambda \vdash m \geq \ell(w)].
\]

Define the $0$--Hecke monoid $(S_\ZZ,*)$ where $u * v = w$ when $\pi_u *\pi_v = \beta^{\ell(u) + \ell(v) - \ell(w)}\pi_w$.
Let $S_{-\infty} = \omega(S_\infty)$, and for $w \in S_{-\infty}$ define $\fkG_w(\beta;\bfx) = \tilde{\omega}(\fkG_{\neg w}(\beta;\bfx))$.
Let $S_{\neq 0} = S_{-\infty} \times S_\infty$, and for $w = u \times v \in S_{\neq 0}$ with $u \in S_{-\infty}$, $v \in S_\infty$ define $\fkG_w(\beta;\bfx) = \fkG_u(\beta;\bfx) \cdot \fkG_v(\beta;\bfx)$.
Note $S_{\neq 0}$ is generated by $\{s_i: i\in \ZZ - \{0\}\}$.

\begin{prop}[{\cite[Prop~5.17]{lam2021Ktheory}}]
    \label{p:Kbackstable-expansion}
    For $w \in S_\ZZ$,
    \[
    \cev{\fkG}_w(\beta;\bfx;\bfy) = \sum_{\substack{
    u * v * z = w\\
    u,z \in S_{\neq 0}
    }} \fkG_{u^-1}(\ominus\bfy) \cdot F_v(\beta;\bfx_-/\bfy) \cdot \fkG_z(\beta;\bfx).
    \]
\end{prop}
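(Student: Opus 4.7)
The plan is to derive the expansion from a finite-variable Cauchy-type precursor and then pass to the backstable limit. The starting point is the identity for the ordinary double $\beta$--Grothendieck polynomial: for $w \in S_\infty$,
\[
\fkG_w(\beta;\bfx_+;\bfy_+) = \sum_{\substack{a * b = w \\ a,b \in S_\infty}} \fkG_{a^{-1}}(\beta;\ominus\bfy_+) \cdot \fkG_b(\beta;\bfx_+),
\]
where $*$ is the $0$--Hecke product. I would prove this by induction on $\ell(w)$: both sides agree at $w = e$, and by applying $\pi_i$ in the $\bfx$--variables one checks that both sides satisfy the same recursion, with the $K$--theoretic twisted Leibniz rule for $\pi_i$ matching the combinatorial operation of appending $s_i$ to the second factor of the Hecke product (or, when $s_i$ was already a descent of $b$, pushing a $\beta$--correction into the first factor).

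Next, I would evaluate the identity on $\iota^p(w)$ for $p$ large enough that $\iota^p(w) \in S_\infty$ and apply $\gamma^{-p}$ to both sides. By definition the left-hand side converges to $\cev{\fkG}_w(\beta;\bfx;\bfy)$. On the right, each Hecke factorization $a * b = \iota^p(w)$ pulls back to a factorization $\iota^{-p}(a) * \iota^{-p}(b) = w$ in $S_\ZZ$. For $p$ large the support of each shifted factor separates into a piece supported on $\{i < 0\}$, a piece supported on $\{i > 0\}$, and a piece that straddles $0$. I would regroup the sum according to a limit triple $(u,v,z)$ with $u * v * z = w$, where $u,z \in S_{\neq 0}$ capture the purely--negative and purely--positive reflections, and $v$ collects the straddling middle letters.

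For the outer factors, stabilization follows directly from the definitions: using $\fkG_w(\beta;\bfx) = \tilde{\omega}(\fkG_{\neg w}(\beta;\bfx))$ for $w \in S_{-\infty}$ and the multiplicativity for $w \in S_{\neq 0}$, one has $\gamma^{-p}\fkG_{\iota^p(z)}(\beta;\bfx_+) \to \fkG_z(\beta;\bfx)$, and analogously $\gamma^{-p}\fkG_{\iota^p(u^{-1})}(\beta;\ominus\bfy_+) \to \fkG_{u^{-1}}(\beta;\ominus\bfy)$. The remaining task is to show that the aggregate of all contributions whose limit triple has prescribed outer pieces $u$ and $z$ and middle $v$ is exactly $F_v(\beta;\bfx_-/\bfy)$.

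This middle identification is the main obstacle and the technical heart of the proof. The cleanest route is to apply the finite Cauchy identity again to the straddling block: after the outer parts are peeled off, the middle contribution is itself a backstable object for $v$ in which every simple reflection sits ``near $0$,'' and its expansion into double stable $\beta$--Grothendieck functions is, by construction, $F_v(\beta;\bfx_-/\bfy)$ via~\eqref{eq:KStanley} together with Proposition~\ref{p:Kgrassmannian}. Making this rigorous requires a bijection between Hecke factorizations of $\iota^p(w)$ and triples of Hecke words, plus a careful argument that the $\beta$--powers tracking non-reduced insertions match on both sides. Once this is verified, the three factors combine term-by-term to produce the stated formula.
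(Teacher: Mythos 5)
The paper offers no proof of Proposition~\ref{p:Kbackstable-expansion}: the statement is imported verbatim from \cite[Prop~5.17]{lam2021Ktheory} and used as a black box. So what you have written is not an alternative to an argument in this paper but an attempt to reprove an external result, and that attempt has genuine gaps.

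The first gap is the regrouping step. A Hecke factorization $a * b = \iota^p(w)$ is a pair, and you want to reorganize the sum so that it is indexed by triples $u*v*z = w$ after shifting back by $\iota^{-p}$. There is no canonical way to do this: the $0$--Hecke monoid does not factor uniquely, so neither $\iota^{-p}(a)$ nor $\iota^{-p}(b)$ splits in a well-defined way into a ``purely negative'' piece, a ``straddling middle,'' and a ``purely positive'' piece, and naively summing over all such splittings would overcount. You would need to exhibit a specific weight-preserving reindexing and prove it is a bijection (or at least that the overcounting cancels against $\beta$-power corrections), and nothing in the sketch does this. Compounding the problem, in your starting Cauchy identity the $\bfy$-variables live entirely in $\fkG_{a^{-1}}(\beta;\ominus\bfy_+)$ while the $\bfx$-variables live entirely in $\fkG_b(\beta;\bfx_+)$, yet the target middle factor $F_v(\beta;\bfx_-/\bfy)$ is a single symmetric function in the superalphabet $\bfx_-/\bfy$. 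Your outline does not explain how, in the limit, contributions drawn from two disjoint factors merge to produce one function of the combined alphabet.

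The second gap is the one you flag yourself: the ``middle identification.'' Your justification is circular. You say the accumulated straddling block is $F_v(\beta;\bfx_-/\bfy)$ ``by construction'' via Equation~\eqref{eq:KStanley}, but Equation~\eqref{eq:KStanley} \emph{defines} $F_v$ as a certain $R(\beta;\bfy)$-linear combination of the $G_\mu$'s with coefficients $j^v_\mu(\beta;\bfy)$; it is not an identity that manufactures $F_v(\beta;\bfx_-/\bfy)$ out of a Cauchy-limit computation. Establishing that the middle aggregate equals $F_v$ is precisely the content of the proposition, so invoking the definition here proves nothing. The referenced proof in \cite{lam2021Ktheory} instead proceeds structurally, working with the algebraic decomposition of the backstable ring and its truncation maps, and the superalphabet arises from that structure rather than having to be reassembled from two separate alphabets; if you wish to pursue the Cauchy-limit route you would essentially be re-deriving that structure, and the sketch does not yet do so.
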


\begin{proof}[Proof of Proposition~\ref{p:backstable-grothendieck-ring}]
By Propositions~\ref{p:Kgrassmannian} and~\ref{p:Kbackstable-expansion}, we have
\[
G_\lambda(\beta;\bfx;\bfy) = \cev{\fkG}_{w_\lambda}(\beta;\bfx;\bfy) = \sum_{\substack{u * v = w_\lambda\\ u \in S_{\neq 0}}} \fkG_{u^{-1}}(\beta;\ominus \bfy) \cdot F_v(\beta;\bfx_-/\bfy).
\]
Here the $\fkG_z(\beta;\bfx)$ term is omitted since every reduced word for $w_\lambda$ ends in $s_0$, hence any $z$ from Proposition~\ref{p:Kbackstable-expansion} must be the identity, and $\fkG_1(\beta;\bfx) = 1$.
This equation implies $G_\lambda(\beta;\bfx;\bfy) \in \Lambda(\beta;\bfx;\bfy)$.
To complete our proof, move all terms with $v \neq w_\lambda$ to the left hand side, giving
\[
F_\lambda(\beta;\bfx_-/\bfy) \cdot \sum_{\substack{u*w_\lambda = w_\lambda\\ u \in S_{\neq 0}}} \fkG_{u^{-1}}(\beta;\ominus \bfy) = G_\lambda(\beta;\bfx_-;\bfy) - \sum_{\substack{u*v = w_\lambda\\ u \in S_{\neq 0},\ v \neq w_\lambda}} \fkG_{u^{-1}}(\beta;\ominus \bfy) F_v(\beta;\bfx_-/\bfy).
\]
Note $v$ is also $0$--Grassmannian with $\ell(v) < |\lambda|$, so by induction on $|\lambda|$, we can assume the right hand side belongs to $\Gamma(\beta;\bfx;\bfy)$.
The sum in the left side is finite and $\fkG_w(\beta;\ominus \bfy) \in R(\beta;\bfy)$ for all $w \in S_{\neq 0}$, so the coefficient of $G_\lambda(\beta;\bfx_-/\bfy)$ is in $R(\beta;\bfy)$ as desired.
\end{proof}

\printbibliography

\end{document}